\documentclass[reqno]{amsart}

\usepackage{silence}
\usepackage{imakeidx}
\makeindex[intoc]

\newcommand{\maass}{\mathrm{Maass}}

\author{Leonard Tomczak}

\usepackage[utf8]{inputenc}

\usepackage[T1]{fontenc}
\usepackage[backend=biber, 
style=alphabetic,
doi=false,
isbn=false,
eprint=false,
giveninits=true,
useprefix=true,
dateabbrev=false,
maxbibnames=6,
maxcitenames=6,
maxalphanames=6,
urldate=long,
]
{biblatex}
\AtEveryBibitem{\clearfield{pages}} 
\AtEveryBibitem{\clearfield{language}}
\AtEveryBibitem{\clearlist{language}}

\DeclareFieldFormat{postnote}{#1} 
\DeclareFieldFormat{multipostnote}{#1}

\DeclareLabelalphaNameTemplate{%
  \namepart[use=true, pre=true, strwidth=1, compound=true]{prefix}%
  \namepart{family}%
}
\DeclareLabelalphaNameTemplate[hyphenated]{%
  \namepart[use=true, pre=true, strwidth=1, compound=true]{prefix}%
  \namepart[strwidth=1, compound=true]{family}%
}
\DeclareSourcemap{%
  \maps[datatype=bibtex]{%
    \map{%
      \step[fieldsource=author, match=\regexp{Lowry-Duda}, final]%
      \step[fieldset=options, fieldvalue={labelalphanametemplatename=hyphenated}]%
    }%
  }%
}

\usepackage{amsmath}
\usepackage{mathtools}
\usepackage{amssymb}

\usepackage{stmaryrd}

\usepackage{bbm}

\usepackage{amsthm}
\usepackage{adjustbox}

\usepackage{float} 

\usepackage{graphicx}
\usepackage{subcaption}

\usepackage{caption}

\usepackage{diagbox}

\usepackage{bm} 

\usepackage[left=3cm,right=3cm]{geometry}

\usepackage{xspace}

\usepackage[table,dvipsnames]{xcolor}

\usepackage{tikz-cd}

\usepackage{spectralsequences}

\usepackage[shortlabels]{enumitem}
\usepackage{crossreftools}

\usepackage{stackrel}

\usepackage{fncylab}
\labelformat{equation}{(#1)}

\usepackage[
colorlinks = false,
pdfpagelabels,
pdfstartview = FitH,
bookmarksopen = true,
bookmarksnumbered = true,
linkcolor = black,
citebordercolor=violet,
urlbordercolor = blue,
plainpages = false,
hypertexnames = false,
citecolor = black,
] {hyperref}

\usepackage{hyperxmp}

\makeatletter
\AtBeginDocument{

  \hypersetup{
    keeppdfinfo,
    pdfauthor={Leonard Tomczak},
    pdftitle={\@title}
  }}
\makeatother

\usepackage[capitalize]{cleveref}

\Crefname{thm}{Theorem}{Theorems}
\Crefname{theorem}{Theorem}{Theorems}
\Crefname{lem}{Lemma}{Lemmas}
\Crefname{lemma}{Lemma}{Lemmas}
\Crefname{prop}{Proposition}{Propositions}
\Crefname{proposition}{Proposition}{Propositions}
\Crefname{cor}{Corollary}{Corollaries}
\Crefname{corollary}{Corollary}{Corollaries}
\Crefname{section}{Section}{Sections}

\usepackage{etoolbox}
\makeatletter
\patchcmd{\chapter}{\if@openright\cleardoublepage\else\clearpage\fi}{}{}{}
\makeatother

\DeclareFontFamily{U}{wncy}{}
\DeclareFontShape{U}{wncy}{m}{n}{<->wncyr10}{}
\DeclareSymbolFont{mcy}{U}{wncy}{m}{n}
\DeclareMathSymbol{\Sha}{\mathord}{mcy}{"58} 

\newcommand{\abs}[1]{\ensuremath{\left\vert#1\right\vert}}
\newcommand{\fabs}[1]{\ensuremath{\vert#1\vert}}
\newcommand{\norm}[1]{\left\lVert#1\right\rVert}

\newcommand{\vertiii}[1]{{\left\vert\kern-0.25ex\left\vert\kern-0.25ex\left\vert #1 
    \right\vert\kern-0.25ex\right\vert\kern-0.25ex\right\vert}}

  \newcommand\legendre[2]{\left(\frac{#1}{#2}\right)}
    \newcommand\legendrebig[2]{\big(\frac{#1}{#2}\big)}

\newcommand{\lquot}{\!\setminus\!}

\usepackage{bbm}

\newcommand{\what}{\widehat}
\newcommand{\wtilde}{\widetilde}

\newcommand{\cusp}{\mathrm{cusp}}

\newcommand{\Z}{\mathbb Z}
\newcommand{\Q}{\mathbb Q}

\newcommand{\R}{\mathbb R}

\newcommand{\C}{\mathbb C}

\renewcommand{\H}{\mathbb H}

\newcommand{\A}{\mathbb A}

\newcommand{\one}{\mathbbm1}

\newcommand{\fp}{\mathfrak p}
\newcommand{\fP}{\mathfrak P}

\newcommand{\FF}{\mathcal F}

\newcommand{\MM}{\mathcal M}

\newcommand{\CHom}{\mathcal{H}\kern -.5pt om}
\newcommand{\CExt}{\mathcal{E}\kern -.5pt xt}

\newcommand{\epsi}{\varepsilon}

\newcommand{\new}{\mathrm{new}}

\renewcommand{\O}{\mathcal O}

\DeclareMathOperator{\tr}{tr}
\DeclareMathOperator{\trd}{trd}
\DeclareMathOperator{\Nrd}{nrd}
\DeclareMathOperator{\nrd}{nrd}

\DeclareMathOperator{\Cls}{Cls}

\DeclareMathOperator{\GL}{GL}
\DeclareMathOperator{\PGL}{PGL}
\DeclareMathOperator{\SU}{SU}
\DeclareMathOperator{\PSU}{PSU}

\DeclareMathOperator{\PSL}{PSL}

\DeclareMathOperator{\PSO}{PSO}

\DeclareMathOperator{\SO}{SO}

\DeclareMathOperator{\Sym}{Sym}

\DeclareMathOperator{\vol}{vol}

\newcommand{\id}{\mathrm{id}}

\DeclareMathOperator{\supp}{supp}

\DeclareMathOperator{\Ind}{Ind}

\renewcommand{\mod}{~\mathrm{mod}\ }

\let\Im\relax
\DeclareMathOperator{\Im}{Im}

\DeclareMathOperator{\sign}{sign}

\renewcommand{\subset}{\subseteq}
\renewcommand{\supset}{\supseteq}

\makeatletter
\@ifundefined{ifGlobalTheoremNumbering}{%
  \newif\ifGlobalTheoremNumbering
  \GlobalTheoremNumberingfalse
}{}
\makeatother

 \makeatletter
 \@ifclassloaded{amsart}{
\renewcommand{\subsection}{%
  \@startsection{subsection}{2}%
    {\z@}%
    {1.25ex \@plus 1ex \@minus .2ex}
    {2ex \@plus .2ex}
    {\normalfont\bfseries}}

\renewcommand{\section}{%
  \@startsection{section}{1}%
    {\z@}%
    {3ex \@plus 1ex \@minus .2ex}
    {3ex \@plus .2ex}
    {\normalfont\scshape\centering}} 
 }
\makeatother

\ifdefstring{\ColoredTheorems}{true}{
    \usepackage{thmtools}
    \usepackage[framemethod=TikZ]{mdframed}
    \usepackage{footnote}

    \declaretheoremstyle[bodyfont = \itshape,
        mdframed={
            skipabove=4pt
            linewidth=2pt,
            rightline=false, topline=false, bottomline=false,
            linecolor=ForestGreen, backgroundcolor=ForestGreen!5,
        }
      ]{theoremgreen}

      \declaretheoremstyle[bodyfont = \itshape,
        mdframed={
            skipabove=4pt
            linewidth=2pt,
            rightline=false, topline=false, bottomline=false,
            linecolor=YellowOrange, backgroundcolor=YellowOrange!5,
        }
      ]{theoremorange}

      \declaretheoremstyle[bodyfont = \itshape,
        mdframed={
            skipabove=6pt
            linewidth=2pt,
            rightline=false, topline=false, bottomline=false,
            linecolor=CornflowerBlue, backgroundcolor=CornflowerBlue!5,
        }
      ]{theoremblue}

      \declaretheoremstyle[
        mdframed={
            skipabove=6pt
            linewidth=2pt,
            rightline=false, topline=false, bottomline=false,
            linecolor=TealBlue, backgroundcolor=TealBlue!5,
        }
      ]{theoremtealblue}
      \declaretheoremstyle[
        mdframed={
            skipabove=6pt
            linewidth=2pt,
            rightline=false, topline=false, bottomline=false,
            linecolor=Fuchsia, backgroundcolor=Fuchsia!5,
        }
      ]{theoremfuchsia}
      \declaretheoremstyle[
        mdframed={
            skipabove=6pt
            linewidth=2pt,
            rightline=false, topline=false, bottomline=false,
            linecolor=GreenYellow, backgroundcolor=GreenYellow!5,
        }
      ]{theoremGreenYellow}

      \declaretheoremstyle[]{empty}

    \declaretheorem[style=theoremgreen, name=Theorem, numberwithin=section]{theorem}
    \declaretheorem[style=theoremgreen, name=Theorem, numbered = no]{theorem*}
    \declaretheorem[style=theoremgreen, name=Corollary, sharenumber=theorem]{corollary}
    \declaretheorem[style=theoremgreen, name=Corollary, numbered = no]{corollary*}
    \declaretheorem[style=theoremgreen, name=Proposition, sharenumber=theorem]{proposition}
    \declaretheorem[style=theoremgreen, name=Proposition, numbered = no]{proposition*}
    \declaretheorem[style=theoremblue, name=Lemma, sharenumber=theorem]{lemma}
    \declaretheorem[style=theoremblue, name=Lemma, sharenumber=theorem, numbered = no]{lemma*}

    \declaretheorem[style=theoremorange, name=Definition, numbered = no]{definition*}
    \declaretheorem[style = theoremfuchsia, name=Remark, numbered = no]{remark*}
    \declaretheorem[style = theoremfuchsia, name=Remark, numbered = no]{remark}
    \declaretheorem[style = theoremfuchsia, name=Remarks, numbered = no]{remarks*}
    
    \declaretheorem[style = theoremtealblue, name=Example, numbered = no]{example*}
    \declaretheorem[style = theoremtealblue, name=Example, numbered = no]{example}
    \declaretheorem[style = theoremtealblue, name=Examples, numbered = no]{examples*}
    \declaretheorem[style = theoremtealblue, name=Examples]{examples}
    \declaretheorem[style = empty, name=Problem]{problem}
    \declaretheorem[style = empty, name=Problem, numbered = no]{problem*}
    \declaretheorem[style = empty, name=Solution, numbered = no]{solution*}
    \declaretheorem[style = theoremGreenYellow, name=Conjecture]{conjecture}
    \declaretheorem[style = theoremGreenYellow, name=Conjecture, numbered = no]{conjecture*}
    
    \declaretheorem[style = theoremfuchsia, name=Fact, numbered = no]{fact*}
}{
  \usepackage{thmtools}

\declaretheoremstyle[
    spaceabove=5pt plus 2pt minus 2pt,
    spacebelow=5pt plus 2pt minus 2pt,
    bodyfont=\itshape,
    headfont=\bfseries,
    headpunct={.},
    postheadspace=.5em,
]{theoremstyle}

\declaretheoremstyle[
    spaceabove=5pt plus 2pt minus 2pt,
    spacebelow=5pt plus 2pt minus 2pt,
    bodyfont=\normalfont,
    headfont=\bfseries,
    headpunct={.},
    postheadspace=.5em,
]{defstyle}

\ifGlobalTheoremNumbering
  \declaretheorem[
    style=theoremstyle,
    refname={Theorem,Theorems},
    Refname={Theorem,Theorems},
    name=Theorem
  ]{theorem}
\else
  \declaretheorem[
    style=theoremstyle,
    name=Theorem,
    refname={Theorem,Theorems},
    Refname={Theorem,Theorems},
    numberwithin=section
  ]{theorem}
\fi

\declaretheorem[
    style=theoremstyle,
    name=Theorem,
    numbered=no,
]{theorem*}

\declaretheorem[
    style=theoremstyle,
    name=Corollary,
    sharenumber=theorem,
    refname={Corollary,Corollaries},
    Refname={Corollary,Corollaries},
]{corollary}

\declaretheorem[
    style=theoremstyle,
    name=Corollary,
    numbered=no,
]{corollary*}

\declaretheorem[
    style=theoremstyle,
    name=Proposition,
    sharenumber=theorem,
    refname={Proposition,Propositions},
    Refname={Proposition,Propositions},
]{proposition}

\declaretheorem[
    style=theoremstyle,
    name=Proposition,
    numbered=no,
]{proposition*}

\declaretheorem[
    style=theoremstyle,
    name=Definition,
    sharenumber=theorem,
    refname={Definition,Definitions},
    Refname={Definition,Definitions},
]{definition}

\declaretheorem[
    style=theoremstyle,
    name=Definition,
    numbered=no,
]{definition*}

\declaretheorem[
    style=theoremstyle,
    name=Lemma,
    sharenumber=theorem,
    refname={Lemma,Lemmas},
    Refname={Lemma,Lemmas},
]{lemma}

\declaretheorem[
    style=theoremstyle,
    name=Lemma,
    numbered=no,
]{lemma*}

\declaretheorem[
    style=theoremstyle,
    name={Satz + Def.},
    sharenumber=theorem,
    refname={Satz + Def.,Satz + Def.},
    Refname={Satz + Def.,Satz + Def.},
]{theoremdef}

\declaretheorem[
    style=defstyle,
    name=Remark,
    refname={Remark,Remarks},
    Refname={Remark,Remarks},
]{remark}

\declaretheorem[
    style=defstyle,
    name=Remark,
    numbered=no,
]{remark*}

\declaretheorem[
    style=defstyle,
    name=Remarks,
    numbered=no,
]{remarks*}

\declaretheorem[
    style=defstyle,
    name=Example,
    refname={Example,Examples},
    Refname={Example,Examples},
]{example}

\declaretheorem[
    style=defstyle,
    name=Example,
    numbered=no,
]{example*}

\declaretheorem[
    style=defstyle,
    name=Examples,
    refname={Examples,Examples},
    Refname={Examples,Examples},
]{examples}

\declaretheorem[
    style=defstyle,
    name=Examples,
    numbered=no,
]{examples*}

\declaretheorem[
    style=defstyle,
    name=Conjecture,
    sharenumber=theorem,
    refname={Conjecture,Conjectures},
    Refname={Conjecture,Conjectures},
]{conjecture}

\declaretheorem[
    style=defstyle,
    name=Conjecture,
    numbered=no,
]{conjecture*}

\declaretheorem[
    style=defstyle,
    name=Problem,
    sharenumber=theorem,
    refname={Problem,Problems},
    Refname={Problem,Problems},
]{problem}
\declaretheorem[
    style=defstyle,
    name=Problem,
    numbered = no
]{problem*}

\declaretheorem[
    style=defstyle,
    name=Solution,
    numbered = no
]{solution*}
}

 \newboolean{drawpics} 
 
\setboolean{drawpics}{true} 

\allowdisplaybreaks

\DeclareFontFamily{U}{mathx}{\hyphenchar\font45}
\DeclareFontShape{U}{mathx}{m}{n}{
      <5> <6> <7> <8> <9> <10>
      <10.95> <12> <14.4> <17.28> <20.74> <24.88>
      mathx10
      }{}
\DeclareSymbolFont{mathx}{U}{mathx}{m}{n}
\DeclareFontSubstitution{U}{mathx}{m}{n}
\DeclareMathAccent{\widecheck}{0}{mathx}{"71}
\DeclareMathAccent{\wideparen}{0}{mathx}{"75}

\makeatletter

\newcommand*\wthelper[2]{%
        \hbox{\dimen@\accentfontxheight#1%
                \accentfontxheight{#1}1.2\dimen@
                $\m@th#1\widetilde{#2}$%
                \accentfontxheight#1\dimen@
        }%
}

\newcommand*\whhelper[2]{%
        \hbox{\dimen@\accentfontxheight#1%
                \accentfontxheight{#1}1.1\dimen@
                $\m@th#1\widehat{#2}$%
                \accentfontxheight#1\dimen@
        }%
}

\newcommand*\accentfontxheight[1]{%
        \fontdimen5\ifx#1\displaystyle
                \textfont
        \else\ifx#1\textstyle
                \textfont
        \else\ifx#1\scriptstyle
                \scriptfont
        \else
                \scriptscriptfont
        \fi\fi\fi3
}
\makeatother

\usepackage{scalerel,stackengine}
\stackMath
\newcommand\reallywidehat[1]{%
\savestack{\tmpbox}{\stretchto{%
  \scaleto{%
    \scalerel*[\widthof{\ensuremath{#1}}]{\kern-.6pt\bigwedge\kern-.6pt}%
    {\rule[-\textheight/2]{1ex}{\textheight}}
  }{\textheight}%
}{0.5ex}}%
\stackon[1pt]{#1}{\tmpbox}%
}

\makeatletter
\newcommand\mathcircled[1]{%
  \mathpalette\@mathcircled{#1}%
}
\newcommand\@mathcircled[2]{%
  \tikz[baseline=(math.base)] \node[draw,circle,inner sep=1pt] (math) {$\m@th#1#2$};%
}
\newcommand\mathcircledsmall[1]{%
  \mathpalette\@mathcircled{#1}%
}
\newcommand\@mathcircledsmall[2]{%
  \tikz[overlay,baseline=(math.base)] \node[draw,circle,inner sep=.5pt] (math) {$\m@th#1#2$};%
}
\makeatother

\title[Murmurations in the Depth Aspect]{Murmurations in the Depth Aspect for Maass and Modular Forms}

\usepackage{longtable}
\usepackage{booktabs}

\makeatletter
\renewcommand{\l@section}{\@tocline{1}{.5\baselineskip plus 1pt}{0pt}{2.5em}{}}
\renewcommand{\l@subsection}{\@tocline{2}{.5\baselineskip plus 1pt}{2.5em}{3.5em}{}}
\makeatother

\numberwithin{theorem}{section}
\begin{document}
\begin{abstract}
    We study murmurations in the depth aspect for holomorphic cusp forms of conductor $\ell^{2a}$ and fixed weight, where $\ell$ is an odd prime. For both $\GL_2$ and the definite quaternion algebra ramified at $\{\infty,\ell\}$, we determine the murmuration density as $a\to\infty$ with $\ell$ fixed. The resulting density agrees with the one previously obtained for odd conductor exponents, and hence gives a uniform density for cusp forms of conductor $\ell^n$ as $n\to\infty$. We also consider the case of Maass forms of conductor $\ell^{n}$. Finally, we compute the murmuration density in conductor $\ell^n$ as $\ell\to\infty$ with $n\geq3$ fixed.
\end{abstract}
\address{Department of Mathematics, Evans Hall, University of California, Berkeley, CA 94720, USA}
\email{leonard.tomczak@berkeley.edu}
\maketitle
\tableofcontents
\section{Introduction and Results}
Murmuration is a recently discovered phenomenon in the theory of automorphic representations. It refers to oscillatory patterns in the averages of Dirichlet coefficients of certain families of $L$-functions when grouped by root number. Although it was originally observed in the case of families of elliptic curves \cite{HeLeeOliPozMEC}, similar phenomena have subsequently been observed in a variety of other families of automorphic representations. We refer to \cite{SarLetter} and \cite{LDMurmsTF} for more background and examples. 
One of the families studied in this paper is the family of Hecke cusp forms of level $\ell^n$ and fixed weight, where $\ell$ is an odd prime. The case of odd $n$ has already been treated in \cite{BKLMYMurms}. We show that in the case of $\GL_2$ modular forms of level $\ell^{2a}$ we obtain the same murmuration density in the limit $a\to\infty$ as in the case of odd exponents, so in particular the whole family of level $\ell^n$ cusp forms has a murmuration density as $n\to\infty$. Additionally, we also show that if $D$ is the quaternion algebra over $\Q$ ramified at $\{\infty,\ell\}$, then cusp forms for $D$ of conductor $\ell^{2a}$, which via the Jacquet--Langlands correspondence are identified with a subfamily of the $\GL_2$ forms, have the same murmuration density. We also consider the non-holomorphic counterpart of this family, weight $0$ Maass forms of prime-power conductor. Previously murmurations of Maass forms have only been established in the spectral aspect at conductor $1$ in \cite{BLLDSHZMurms}. Finally, in all three cases we study the family in which the conductor exponent is fixed and $\ell\to\infty$, which is similar to \cite{ZubM}, but with a different averaging. Our results will be conditional on the Generalized Riemann Hypothesis (GRH) for Dirichlet $L$-functions.

In order to state our results, let us introduce some notation. Throughout, $\ell$ denotes an odd prime, $k$ an even positive integer, and $E$ a compact interval of positive real numbers. For $t\in\Z$ define \begin{align*}
    A_{t}=\prod_{\substack{p\nmid t}}\left(\frac{p^2-p-1}{p(p-1)}\right),
\end{align*}
where the product runs over primes $p$. We let \begin{align*}
    \MM_{\ell, k}(v) &= (-1)^{\frac k2+1}\frac{2\pi}{(k-1)(1-\ell^{-1})}\sum_{\substack{r\in\Z\\ \abs r < 2\ell\sqrt v}}(\one_{\ell\mid r}-\ell^{-1})A_{\ell r}\sqrt{v-\frac {r^2}{4\ell^2}}U_{k-2}\left(\frac r{2\ell\sqrt v}\right),\\
    M_{E,\ell, k} &= \frac1{\abs E}\int_E\MM_{\ell, k}(v)\mathrm dv.
\end{align*}
Here $U_{k-2}$ denotes the Chebyshev polynomial of the second kind. Note that $M_{E,\ell, k}$ is the same number that appears in the limit of \cite[Theorem 1.1]{BKLMYMurms}.\footnote{In their theorem the Euler product excludes the prime $2$. We believe the $p=2$ factor should be included unless $2\mid r$ as follows from the computations in \cite[Lemma 4.3]{BBLLDMurms}. This is also consistent with the smoothed density limit analogue in \cite[Theorem 5]{ZubM} which is expected to be $\frac12$ by the Katz-Sarnak 1-level density predictions.} Let $H_k^\new(N)$ denote a normalized Hecke eigenbasis for $S_k^\new(N):=S_k^\new(\Gamma_0(N))$. For a cusp form (holomorphic or not) $\phi$ with trivial central character over $\GL_2$ or a quaternion algebra, the Hecke eigenvalue $\lambda_P(\phi)$ at a prime $P$ is normalized so that the Ramanujan conjecture reads $\abs{\lambda_P(\phi)}\leq2$. We denote by $\epsi(\phi)$ its global root number, i.e.\ $\epsi(\phi) = \epsi(\frac12, \phi)=\pm1$. For classical newforms we will use the terms level and conductor interchangeably.

Next let $D=D_\ell$ be the unique quaternion algebra over $\Q$ ramified at $\{\infty,\ell\}$. Let $\FF_{D, k}(N)$ denote the set of infinite-dimensional automorphic representations $\pi$ of $D^\times$ with trivial central character, conductor $N$, and $\pi_\infty\cong \Sym^{k-2}(\C^2)$. Note that we will define the conductor of an irreducible representation of $D_\ell^\times$ later in \cref{section:tfs}.

Let $H_{\maass}^\new(N)$ be the set of normalized weight $0$ Hecke Maass eigenforms of conductor $N$. If $\phi\in H_{\maass}^\new(N)$, let $t_\phi$ denote the spectral parameter (unique up to sign), so that the Laplace eigenvalue of $\phi$ is $\frac14+t_\phi^2$. Let $h$ be an even function that is holomorphic in $\{z\in \C\mid \abs{\Im z}<A\}$ and that satisfies $\abs {h(z)}=O((1+\abs z)^{-B})$ for some $A>1$, $B >2$. $h=h_f$ is the Selberg transform (see \cref{section:maass_props}) of an $\SO(2)$-biinvariant function $f$ on $\PGL_2(\R)^+$. Let $Q_f\in C([0,\infty))$ denote the Abel transform of $f$, defined in \cref{section:maass_props}. We assume that $f(1)\ne0$. Define \begin{align*}
    \MM_{\ell, f}(v) &= \frac{\pi}{2f(1)(1-\ell^{-1})}\sum_{\substack{r\in\Z}}(\one_{\ell\mid r}-\ell^{-1})A_{\ell r}\sqrt v Q_f\left(\frac{r^2}{\ell^2 v}\right),\\
    M_{E,\ell, f} &= \frac1{\abs E}\int_E\MM_{\ell, f}(v)\mathrm dv.
\end{align*}

Then our main result for fixed $\ell$ is:
\begin{theorem}\label{theorem:n_limit}
    Fix an odd prime $\ell$, a compact interval $E\subset\R_{>0}$, $\epsi\in (0, \frac15)$, and the remaining notation as above. We assume GRH for Dirichlet $L$-functions. The following asymptotics hold as $a\to\infty$ in (i) and (ii), and as $n\to\infty$ in (iii): \begin{enumerate}[(i)]
        \item For $a\geq2$ and holomorphic modular forms: \begin{align*}
            \frac{\displaystyle\sum_{\substack{P/\ell^{2a}\in E\\ \text{$P$ prime}}}\log P\sum_{\phi\in H^\new_k(\ell^{2a})}\epsi(\phi)P^{1/2}\lambda_P(\phi)}{\displaystyle\sum_{\substack{P/\ell^{2a}\in E\\ \text{$P$ prime}}}\log P \sum_{\phi\in H^\new_k(\ell^{2a})}1}
            = M_{E,\ell,k} + O_{E, k, \ell, \epsi}(\ell^{(-\frac15+\epsi)a}).
             \end{align*}
        \item For $a\geq2$ and automorphic forms on the quaternion algebra $D$:\begin{align*}
                &\frac{\displaystyle\sum_{\substack{P/\ell^{2a}\in E\\ \text{$P$ prime}}}\log P\sum_{\pi\in\FF_{D, k}(\ell^{2a})}\epsi(\pi)P^{1/2}\lambda_P(\pi)}{\displaystyle\sum_{\substack{P/\ell^{2a}\in E\\ \text{$P$ prime}}}\log P \sum_{\pi\in\FF_{D, k}(\ell^{2a})}1} = M_{E,\ell,k} + O_{E, k,\ell, \epsi}(\ell^{(-\frac15+\epsi)a}).
            \end{align*}
        \item For $n\geq3$ and Maass forms:
            \begin{align*}
                &\frac{\displaystyle\sum_{\substack{P/\ell^{n}\in E\\ \text{$P$ prime}}}\log P\sum_{\phi\in H_\maass^\new(\ell^{n})}h_f(t_\phi)\epsi(\phi)P^{1/2}\lambda_P(\phi)}{\displaystyle\sum_{\substack{P/\ell^{n}\in E\\ \text{$P$ prime}}}\log P \sum_{\phi\in H_\maass^\new(\ell^{n})}h_f(t_\phi)} = M_{E,\ell,f} + O_{E, f, \ell, \epsi}(\ell^{(-\frac1{5}+\epsi)\lfloor n/2\rfloor}).
            \end{align*}
    \end{enumerate}
\end{theorem}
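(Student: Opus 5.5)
The plan is to express the root-number-twisted sum as a trace of a Hecke operator composed with an Atkin--Lehner-type operator, and then evaluate that trace with a trace formula. The model is \cite{BKLMYMurms} for odd exponents, adapted to even exponents and to $D$ and Maass forms.

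For each case (i)--(iii), since $\epsi(\phi)$ is, up to the sign $(-1)^{k/2}$, the Atkin--Lehner eigenvalue $\eta_\phi$ at $\ell$, the inner sum $\sum_\phi \epsi(\phi)\lambda_P(\phi)$ equals $\pm P^{-(k-1)/2}\tr(T_P W_{\ell^n}\mid S_k^\new(\ell^n))$. Here $W_{\ell^n}$ is the Fricke involution, and the trace is weighted by $h_f(t_\phi)$ in the Maass case. In case (ii) we instead use the element of $D_\ell^\times$ of reduced norm $\ell$ acting on the representations of conductor $\ell^{2a}$, described via \cref{section:tfs}. To pass from all forms to newforms we use the standard Möbius inversion over levels $\ell^m$, $m\le n$, using how $W$ acts on oldforms. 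Because only prime powers of $\ell$ are involved, the inversion involves few terms. In the quaternion case we use newvectors/conductor filtrations in the local representation of $D_\ell^\times$.

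Next apply the Eichler--Selberg trace formula (resp. Selberg trace formula for Maass forms with test function $f$, resp. the simple trace formula on $D^\times$) to $T_P W_{\ell^n}$. The elliptic terms are indexed by $r\in\Z$ with $r^2-4P\ell^n<0$, where $r$ is the trace of matrices of determinant $P\ell^n$ with the relevant $\ell$-adic shape. For the Fricke element the trace is divisible by $\ell^{\lceil n/2\rceil}$ up to the local orbital integral. Writing the trace as $\ell^{\lfloor n/2\rfloor}r'$ gives the weight $\sqrt{v-r^2/(4\ell^2)}\,U_{k-2}(\cdot)$ after rescaling by $P=v\ell^n$, and $Q_f(r^2/(\ell^2v))$ in the Maass case. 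Each term carries a class number $H(4P\ell^n-r^2)$ times an $\ell$-adic local factor, and the local factor produces $\one_{\ell\mid r}-\ell^{-1}$. The main work is the local orbital integral at $\ell$ for even exponent $2a$. I would compute it through the count of embeddings of orders into the Eichler order of level $\ell^{2a}$ twisted by the Fricke element, or, for $D$, the corresponding fixed-point count on the compact group. I expect that for $n=2a$ this differs from the odd case only in terms that cancel after the newform inversion, leaving the same density.

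Then average over primes $P\in\ell^nE$ with weight $\log P$. Write $H(4P\ell^n-r^2)$ as $\sqrt{\,\cdot\,}\,L(1,\chi_{d})$ times finite Euler factors. Expand $L(1,\chi)$ as a short Dirichlet series truncated under GRH, and swap the sums so that an average of $\chi_d(m)$ over primes $P$ in progressions appears. GRH for Dirichlet $L$-functions gives equidistribution of primes mod $m$ with error $P^{1/2+\epsi}$. The average of local densities then produces the Euler product $A_{\ell r}$, as in \cite[Lemma 4.3]{BBLLDMurms}, including $p=2$. The identity, parabolic and hyperbolic terms of the trace formula contribute lower order after multiplying by $P^{1/2}$ and summing. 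The denominator is the dimension of the newspace, $\asymp \frac{k-1}{12}\ell^n(1-\ell^{-1})^2$ (resp. Weyl law weighted by $h_f$, giving $f(1)$), which yields the normalizing constants.

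The main obstacle is the error term $\ell^{(-1/5+\epsi)a}$. I would balance three quantities: the truncation length of the $L(1,\chi)$ expansion, the GRH prime-counting error, and the tail in $r$. This optimization should yield the exponent $1/5$ in $\ell^{\lfloor n/2\rfloor}$, since the effective number of $r$ values is about $\ell^{\lfloor n/2\rfloor}$.
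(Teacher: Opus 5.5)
Your overall architecture matches the paper's: the trace of $T_P$ composed with a root-number operator, newform inversion, class numbers rewritten as $L(1,\psi_D)$, the GRH prime average of \cite[Lemma 4.5]{BBLLDMurms}, and division by the count of forms. However, the step you flag as the main work is mispredicted, and the correct version needs an ingredient your plan does not have. For $n=2a$ only traces with $\ell^{a-1}\mid t$ survive. For these, $t^2-4P\equiv-4P \pmod{\ell}$ is prime to $\ell$, and the inversion produces $H(\ell^{2j}D)-H(\ell^{2j-2}D)=\ell^j\bigl(1-\ell^{-1}\legendre{D}{\ell}\bigr)H(D)$. The elliptic side therefore becomes a sum over classes of determinant $P$ (not $\ell P$ as for odd exponents), weighted by $c_{\ell^a}(t)$ and by a $P$-dependent factor that survives the inversion:
\begin{itemize}
\item $\frac{\ell-\chi_\ell(-P)}{2\ell}$ in \cref{proposition:GL2_TF};
\item $\frac14(1+\ell^{-1})(1-\chi_\ell(-P))$ in \cref{proposition:quat_TF}, so the quaternionic twisted sum is identically $0$ whenever $\chi_\ell(-P)=1$;
\item $\frac{\ell-\chi_\ell(P)}{\ell}$ for even Maass.
\end{itemize}
So nothing cancels in the inversion, and the even and odd densities agree only after averaging over $P$. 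That requires a twisted prime average with weight $\chi_\ell(\mp P)$, which means GRH equidistribution of primes modulo $\ell m^2$. It also requires the identity $L(1,\wtilde{\psi}_{t,\chi_\ell})=\ell^{-1}L(1,\wtilde{\psi}_t)$ for $\ell\mid t$, which comes from $\psi_{t^2-4n}(\ell^e)=\chi_\ell(-n)\psi_{t^2-4n}(\ell^{e-1})$. Together these turn the factor into $1-\ell^{-2}$, the same factor the odd case obtains from its own local average. In (ii) you also need the count $\frac{k-1}{24}\ell^{2a-3}(\ell-1)^2(\ell+1)$, which is half the $\GL_2$ count and offsets the extra $\frac12$ in the numerator.

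Your realization of $\epsi$ is also wrong in (ii) and (iii).
\begin{itemize}
\item On $D_\ell^\times$ the root number is not the eigenvalue of a single element. It comes from the Bushnell--Henniart Gauss-sum integral $\int_{D(n-2)}\pi(x)\psi_D(x)\,\mathrm dx$, normalized by $\dim\pi=d_n$. For $n=2a$ this integral lives on elements of even valuation, i.e.\ of unit reduced norm modulo the centre. An element of reduced norm $\ell$ has the wrong parity: representations of even conductor $\geq4$ are induced from the index-two subgroup $\Q_\ell^\times\O_{D_\ell}^\times$, so its trace on them is $0$.
\item For Maass forms $\epsi(\phi)$ contains the parity sign $(-1)^\delta$ at $\infty$, so it is not a fixed sign times the Atkin--Lehner eigenvalue. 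You need the archimedean test function $f(gj)$, supported on $\det<0$. Then the contributing classes have determinant $-\ell^mP$ and real quadratic discriminant $t^2+4\ell^mP$, with $t$ unrestricted. This, not $H(4P\ell^n-r^2)$, is what produces $Q_f(r^2/(\ell^2v))$ summed over all $r\in\Z$.
\item For even $n$ you omit the continuous spectrum. It does not vanish: characters with $c(\chi)=\ell^a$ contribute, as does an $\ell$-adic intertwining-operator term. Its trivial bound is not smaller than the main term, so it has to be controlled after summing over $P$, using GRH for $\sum_P\chi(P)\log P$ and bounds for $\Lambda'/\Lambda(\cdot,\chi^2)$.
\end{itemize}
Finally, the saving $\ell^{-a/5}$ is not explained by roughly $\ell^{\lfloor n/2\rfloor}$ values of $r$. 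Only $O(\ell)$ traces contribute. The saving is the $B^{-1/10}$ of the prime-average lemma with $B\asymp\ell^{2a}$ and $M,V\ll\ell^a$.
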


\begin{remarks*}~
    \begin{enumerate}
        \item In \cite{BKLMYMurms} the GRH assumption is removed from their result at the cost of obtaining a weaker error bound $O_C(a^{-C})$ by invoking a specialization of the Bombieri--Vinogradov theorem to square moduli due to Baker \cite{BakPAPSM}. We do not attempt to remove GRH here. The reason is that, in the proof of \cite[Lemma 3.3]{BKLMYMurms}, we do not see how to justify the treatment of the tail of the $L(1,\psi_D)$-sum using only the unconditional bound in \cite[Lemma 4.3]{BKLMYMurms}. The argument appears to require the GLH-strength tail estimate from \cite[Lemma 4.4]{BBLLDMurms}. If this can be resolved, the same strategy can very likely be applied in our case. We need moduli of the form $\ell m^2$, but since $\ell$ is fixed here, one can apply \cite{BakPAPSM} to $(\ell m)^2$.
        \item Combining \cref{theorem:n_limit} (i) with the result from \cite{BKLMYMurms} for odd exponents, we obtain the murmuration for conductors including all powers of $\ell$.
        \item By the Jacquet--Langlands correspondence, automorphic representations $\pi\in\FF_{D, k}(\ell^n)$ are in bijection with the subset of Hecke cusp forms $f\in H^\new_k(\ell^n)$ that have a discrete series component at $\ell$, and moreover this bijection preserves the Hecke action. In particular if $n\geq3$ is odd, then any Hecke cusp form $f\in H^\new_k(\ell^n)$ has a discrete series component at $\ell$ since the ramified principal series representations of $\GL_2(\Q_\ell)$ with trivial central character have even conductor exponent. Thus, in this case\[ 
            \frac{\displaystyle\sum_{\substack{P/\ell^{2a+1}\in E\\ \text{$P$ prime}}}\log P\sum_{\pi\in\FF_{D, k}(\ell^{2a+1})}\epsi(\pi)P^{1/2}\lambda_P(\pi)}{\displaystyle\sum_{\substack{P/\ell^{2a+1}\in E\\ \text{$P$ prime}}}\log P \sum_{\pi\in\FF_{D, k}(\ell^{2a+1})}1}
        \]
        is the same as the corresponding density for $H^\new_k(\ell^{2a+1})$. This will also follow from our explicit formulas for the traces later on in \cref{section:tf_proofs}. Hence, in the quaternion algebra case as well, we obtain murmurations for conductors including all powers of $\ell$.
        \item It is interesting to note that the explicit trace formulas used to prove these results, including the one in \cite{BKLMYMurms}, are different. Most notably, we sum over elliptic conjugacy classes of determinant $P$ in the even conductor exponent case and determinant $\ell P$ in the odd case. Yet in all the cases - quaternionic forms, classical modular forms, even vs.\ odd conductor exponent - we obtain the same limit $M_{E, \ell,k}$. It would be desirable to have a more uniform proof of this.
    \end{enumerate}
\end{remarks*}
\begin{figure}[H]
    \includegraphics[scale = 0.8]{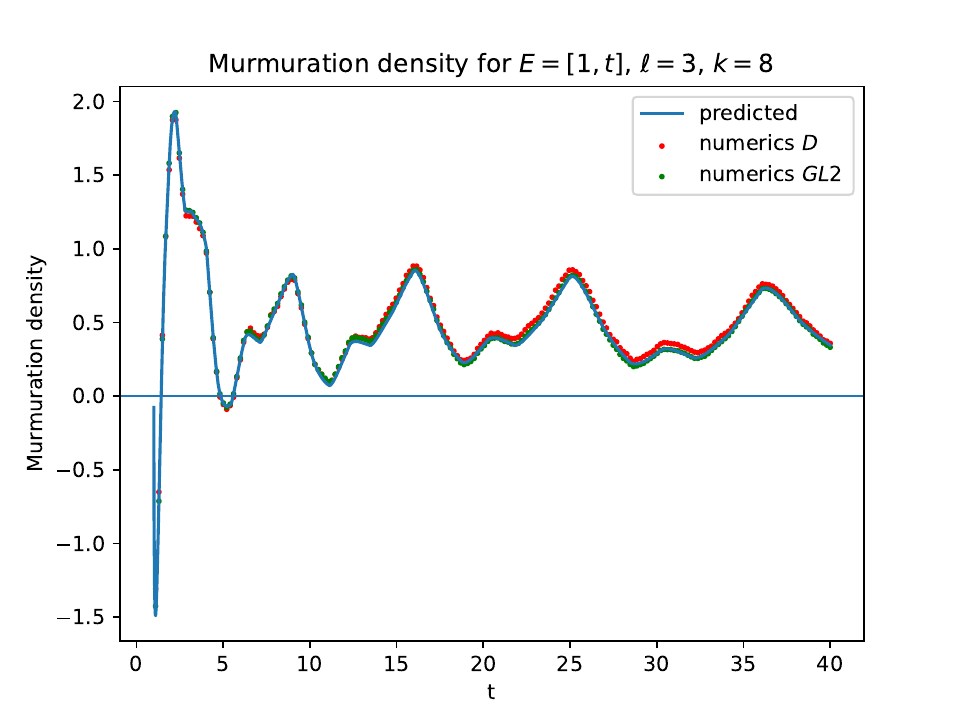}
    \caption[Murmuration plot for \cref{theorem:n_limit} \textup{(i)}, \textup{(ii)}]{The blue line plots the function $t\mapsto M_{[1,t], \ell, k}=\frac1{t-1}\int_1^t\MM_{\ell, k}(v)\mathrm dv$ for $\ell=3,k=8$. The green (resp.\ red) dots show direct numerical evaluations of the quotient on the left-hand side of \cref{theorem:n_limit} \textup{(i)} (resp.\ \textup{(ii)}), with $a = 4$.\footnotemark }
\end{figure}
\footnotetext{The plots in \cite{BKLMYMurms} appear to differ from ours, even after accounting for the contribution of the local factor at $p=2$. In particular our graph does not exhibit the ``fuzziness'' observed there. We were not able to reproduce the graphs from \cite{BKLMYMurms}.}

\begin{figure}[H]
    \centering
    \includegraphics[scale = 0.8]{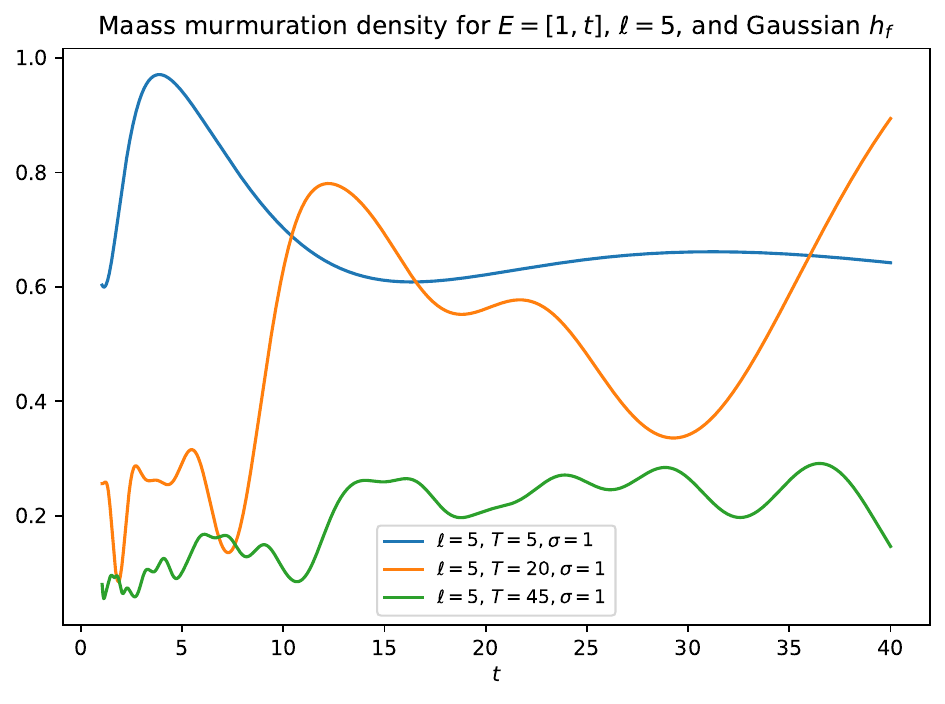}
    \caption[Murmuration plot for \cref{theorem:n_limit} (iii)]{Plot of $t\mapsto M_{[1,t],\ell,f}$ for $\ell = 5$, where $h_f(t) = G\left(\frac{t-T}{\sigma}\right)+G\left(\frac{t+T}{\sigma}\right)$ with $G(x) = e^{-x^2/2}$, for various values of $T$.}
\end{figure}
The proofs of these results will follow along the same lines as in \cite{BKLMYMurms}. We first derive explicit formulas for the relevant trace sums in terms of class numbers and then use \cite[Lemma 4.5]{BBLLDMurms} to estimate the prime sums. In the case of holomorphic forms on $\GL_2$ the trace formula follows quickly from the Yamauchi--Skoruppa--Zagier variant \cite{SkoZagJFMF} of the Eichler--Selberg trace formula as in \cite[Proposition 2.1]{BKLMYMurms}. In the case of quaternion algebras we derive the trace formula directly using the simple compact quotient trace formula by choosing appropriate test functions, and similarly for Maass forms in which case we use the adelic trace formula for $\GL_2$.

During the proofs we will keep track of the $\ell$-dependence in the error term, which also allows us to consider the limit in $\ell$ with $a\geq2$ fixed. This is closer to the situation in \cite{ZubM}, but with the substantial difference that here there is no averaging over the conductor, but instead over the prime $P$. Let \begin{align*}
    \MM_{\infty, k}(v)&= (-1)^{\frac k2+1}\frac{2\pi}{k-1}\sum_{\substack{s\in\Z\\ \abs s < 2\sqrt v}}A_{s}\sqrt{v-\frac{s^2}4}U_{k-2}\left(\frac s{2\sqrt v}\right) - 12\one_{k=2}v,\\
    M_{E, \infty, k} &= \frac1{\abs E}\int_E\MM_{\infty, k}(v)\mathrm dv.
\end{align*}
For the Maass form case, define $C_f = \frac{6}{\pi^2}\int_\R Q_f(x^2)\mathrm dx$ and \begin{align*}
    \MM_{\infty, f}(v) &= \frac{\pi}{2f(1)}\left(\sum_{s\in\Z}A_s\sqrt vQ_f\left(\frac{s^2}{v}\right)-C_fv\right),\\
    M_{E,\infty,f}&=\frac{1}{\abs E}\int_E \MM_{\infty, f}(v)\mathrm dv.
\end{align*}
Then we have:
\begin{theorem}\label{theorem:ell_limit}
    We assume GRH for Dirichlet $L$-functions. Then as $\ell\to\infty$ through odd primes we have the following limits.\begin{enumerate}[(i)]
        \item Fix $a\geq2$. For holomorphic modular forms:  \begin{align*}
            \frac{\displaystyle\sum_{\substack{P/\ell^{2a}\in E\\ \text{$P$ prime}}}\log P\sum_{\phi\in H^\new_k(\ell^{2a})}\epsi(\phi)P^{1/2}\lambda_P(\phi)}{\displaystyle\sum_{\substack{P/\ell^{2a}\in E\\ \text{$P$ prime}}}\log P \sum_{\phi\in H^\new_k(\ell^{2a})}1}
            = M_{E,\infty,k} + O_{E, k, a, \epsi}(\ell^{-a/5+\epsi}+\ell^{-1}).
        \end{align*}
        \item Fix $a\geq2$. For automorphic forms on the quaternion algebra $D_\ell$:\begin{align*}
            \frac{\displaystyle\sum_{\substack{P/\ell^{2a}\in E\\ \text{$P$ prime}}}\log P\sum_{\pi\in\FF_{D_\ell, k}(\ell^{2a})}\epsi(\pi)P^{1/2}\lambda_P(\pi)}{\displaystyle\sum_{\substack{P/\ell^{2a}\in E\\ \text{$P$ prime}}}\log P \sum_{\pi\in\FF_{D_\ell, k}(\ell^{2a})}1} 
            = M_{E,\infty,k} + O_{E, k, a, \epsi}(\ell^{-\frac{1}{5}(a-1)+\epsi}+\ell^{-1}).
        \end{align*}
        \item Fix $n\geq3$. For Maass forms:
            \begin{align*}
                &\frac{\displaystyle\sum_{\substack{P/\ell^{n}\in E\\ \text{$P$ prime}}}\log P\sum_{\phi\in H_\maass^\new(\ell^{n})}h_f(t_\phi)\epsi(\phi)P^{1/2}\lambda_P(\phi)}{\displaystyle\sum_{\substack{P/\ell^{n}\in E\\ \text{$P$ prime}}}\log P \sum_{\phi\in H_\maass^\new(\ell^{n})}h_f(t_\phi)} = M_{E,\infty,f} + O_{E, f, n, \epsi}(\ell^{-n/10+\epsi}+\ell^{-1}).
            \end{align*}
    \end{enumerate}
\end{theorem}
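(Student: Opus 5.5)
The plan is to run the argument for \cref{theorem:n_limit} again, now with $a$ (or $n$) fixed and every error term tracked uniformly in $\ell$.

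\textbf{Step 1: explicit trace formulas.} For each family I would write the root-number-twisted trace as an explicit class-number expression. Since $\epsi(\phi)$ is essentially the eigenvalue of the Atkin--Lehner (or local) involution at $\ell$, the sum $\sum_\phi \epsi(\phi)\lambda_P(\phi)$ is the trace of $T_P$ composed with that involution. Using the Skoruppa--Zagier form of the Eichler--Selberg trace formula for (i), and the compact-quotient or adelic trace formula with the test functions of the later sections for (ii) and (iii), this becomes a sum over $r$ with $r^2<4P$ of
\[
U_{k-2}\Bigl(\tfrac{r}{2\sqrt P}\Bigr)\,\text{(resp.\ } Q_f\text{)}\;\times\;H(4P-r^2)\;\times\;c_\ell(r),
\]
where $H$ is the Hurwitz class number and $c_\ell(r)$ is an explicit local weight at $\ell$.

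\textbf{Step 2: the local weight as $\ell$ varies.} With $n$ fixed and $\ell\to\infty$, I expect $c_\ell(r)$ to equal a main constant for $\ell\nmid r$ plus terms of relative size $O(\ell^{-1})$. This is the source of the $\ell^{-1}$ error. It also explains why the $(\one_{\ell\mid r}-\ell^{-1})$ structure and the variable $r/\ell$ in $\MM_{\ell,k}$ are replaced by a plain sum over $s$ in $\MM_{\infty,k}$. After normalising by $P\approx v\ell^{2a}$, the relevant summation variable becomes $s=r/\ell^{a}$.

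\textbf{Step 3: the prime sum.} I would evaluate $\sum_{P}\log P\,H(4P-r^2)$ by writing $H$ as $\sqrt{|d|}\,L(1,\chi_d)$ and averaging over primes in progressions via \cite[Lemma 4.5]{BBLLDMurms} under GRH. This produces the Euler product $A_s$ and the $\sqrt{v-s^2/4}$ weight, with error $\ell^{-a/5+\epsi}$; the exponent is the same $1/5$ saving as before, now with explicit $\ell$-dependence.

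The extra terms in $\MM_{\infty}$ come from the identity and hyperbolic terms. The $-12\one_{k=2}v$ and $-C_fv$ terms arise because the identity and Eisenstein (hyperbolic) contributions are no longer negligible relative to the normalisation once $\ell$ varies. I would compute them directly and divide by the dimension, which is $\asymp\ell^{2a}$ (or $\ell^n$).

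\textbf{Step 4: the denominator.} I would use the dimension formula for newforms of level $\ell^{2a}$, together with Weyl's law weighted by $h_f$ for Maass forms, keeping the constant that gives $(k-1)$ and $f(1)$. For (ii), the quaternion dimension differs from the $\GL_2$ one by a factor that changes the effective exponent, hence the loss to $(a-1)/5$. For (iii), with $n$ odd I would take $\lfloor n/2\rfloor$, which gives $n/10$.

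\textbf{Main obstacle.} I expect Step 2 to be the hardest part, specifically obtaining the local orbital integrals at $\ell$ uniformly in $\ell$ and checking that the non-main terms contribute only $O(\ell^{-1})$ after summing over $r$ and $P$.
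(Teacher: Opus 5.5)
Your overall plan (rerun the proof of \cref{theorem:n_limit} with explicit $\ell$-dependence, then pass to the limit) matches the paper. The gap is in where you expect the terms $-12\one_{k=2}v$ and $-C_fv$ of $\MM_{\infty,k}$ and $\MM_{\infty,f}$ to come from. They do not come from identity, hyperbolic or Eisenstein contributions, which is what happens in Zubrilina's squarefree setting but not here.
\begin{itemize}
\item The Skoruppa--Zagier formula behind \cref{proposition:GL2_TF} has no identity term, and its $\one_{k=2}(P+1)$ term cancels in the inclusion--exclusion.
\item The hyperbolic term, summed over $P$, is $O(a\ell^{3a}\log\ell)$ by \cref{lemma:trivial_hyperbolic_estimate}. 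That is $O(\ell^{-a}\log\ell)$ after dividing by the denominator $\asymp\ell^{4a}$.
\item In the Maass case $J_{\mathrm{id}}$, $J_{\mathrm{one}}$ and $J_{\mathrm{par}}$ vanish, and all non-elliptic terms vanish for odd $n$. For even $n$ the hyperbolic and Eisenstein terms are $O(\ell^{3a}\log^3\ell)$.
\end{itemize}
So computing these contributions directly gives nothing of size $v$. Your limit would be wrong for $k=2$ and for every Maass test function.

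What actually produces these terms is exactly what your Step 2 discards. The local weight $c_{\ell^a}(t)$ vanishes unless $\ell^{a-1}\mid t$. Writing $t=\ell^{a-1}r$, it equals $\ell^{a}(\one_{\ell\mid r}-\ell^{-1})$; this is the structure in $\MM_{\ell,k}$. The terms with $\ell\nmid r$ have relative weight $-\ell^{-1}$, but there are $\asymp\ell$ times as many of them, at mesh $1/\ell$ in the natural variable $r/\ell$. Their total is therefore of order one, not $O(\ell^{-1})$.

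The paper evaluates them as a Riemann sum. It uses $A_{\ell r}=A_r(1+O(\ell^{-2}))$ for $\ell\nmid r$ and $A_r=A_1\sum_{d\mid r}q(d)$ with $q(d)=\mu^2(d)\prod_{p\mid d}(p^2-p-1)^{-1}$. Then $\ell^{-1}\sum_m g_v(dm/\ell)=d^{-1}\int_\R g_v+O(\ell^{-1}\int_\R\abs{g_v'})$, and $A_1\sum_d q(d)/d=1/\zeta(2)$.

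Orthogonality of Chebyshev polynomials gives $\int_\R g_v=2v\int_{-1}^1\sqrt{1-x^2}\,U_{k-2}(x)\,\mathrm dx=\pi v\one_{k=2}$. After the prefactor this is precisely $-12\one_{k=2}v$. In the Maass case, $\int_\R\sqrt v\,Q_f(x^2/v)\,\mathrm dx=v\int_\R Q_f(x^2)\,\mathrm dx$ gives $-C_fv$.

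Even for $k>2$, where this contribution does tend to $0$, a termwise bound only gives $O(1)$. You need this cancellation to obtain the $O(\ell^{-1})$ error. This computation of $\lim_{\ell\to\infty}\MM_{\ell,k}(v)$, inserted into the explicit error bound from the proof of \cref{theorem:n_limit}, is the whole proof. The $\ell$-adic weights are already exact, so uniformity of local orbital integrals is not the real obstacle.

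A smaller point concerns the error exponents.
\begin{itemize}
\item The weaker exponent $(a-1)/5$ in (ii) is not a dimension effect: \cref{proposition:quat_count} is exactly half the $\GL_2$ main term. It comes from the $\chi_\ell(-P)$-twisted prime sum, which requires moduli $\ell m^2$ and loses $R=\ell^{1/5}$ in \cref{lemma:prime_Phi_average}. In \cref{proposition:GL2_TF} that twisted part carries a factor $\ell^{-1}$, which absorbs the loss. In \cref{proposition:quat_TF} it carries $1-\chi_\ell(-P)$, which does not.
\item For odd $n$ in (iii), using $\lfloor n/2\rfloor$ would give only $\ell^{-(n-1)/10}$. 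The exponent $n/10$ comes from $R=\ell^{9/10}w_{\ell,a+1}$ measured against the denominator $\ell^{2n}$.
\end{itemize}
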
 

\begin{remarks*}~
    \begin{enumerate}
        \item Although it may seem unnatural at first to consider representations of varying groups in \cref{theorem:ell_limit} (ii), the Jacquet--Langlands correspondence transfers the automorphic representations of $D_\ell^\times$ to automorphic representations of $\GL_2$. Thus, this theorem can be viewed as a statement about a harmonic family, in the sense of \cite{SarShiTemFamily}, on $\GL_2$.
        \item The same results in the case of holomorphic forms should hold for odd exponents: the argument essentially consists of taking the limit of $M_{E, \ell, k}$ as $\ell\to\infty$. One just has to keep track of the $\ell$-dependence of the error term in \cite{BKLMYMurms}.
        \item It is interesting to note that the murmuration densities in \cref{theorem:ell_limit} do not depend on $a$ (or $n$ in the Maass case).
        \item Our murmuration density $\MM_{\infty,k}$ has the very same shape as $\MM_k$ computed in \cite{ZubM}. It differs only in the coefficients. Interestingly, the $\one_{k=2}$ term in \cite{ZubM} comes from the residual spectrum contribution to the trace formula, whereas in our case this contribution cancels during the inclusion-exclusion process in the proof of \cref{proposition:GL2_TF}, and the corresponding term instead comes from the orthogonality relations for Chebyshev polynomials. In work in progress \cite{TomMMFSC}, the author proves an analogue of Zubrilina's result for Maass forms of varying squarefree conductor. In this setting as well, the density has the same shape as $\MM_{\infty,f}$, differing only in the coefficients. Moreover, the residual spectrum contribution behaves analogously.
    \end{enumerate}
\end{remarks*}

\begin{figure}[H]
    \centering
    \includegraphics[scale = 0.8]{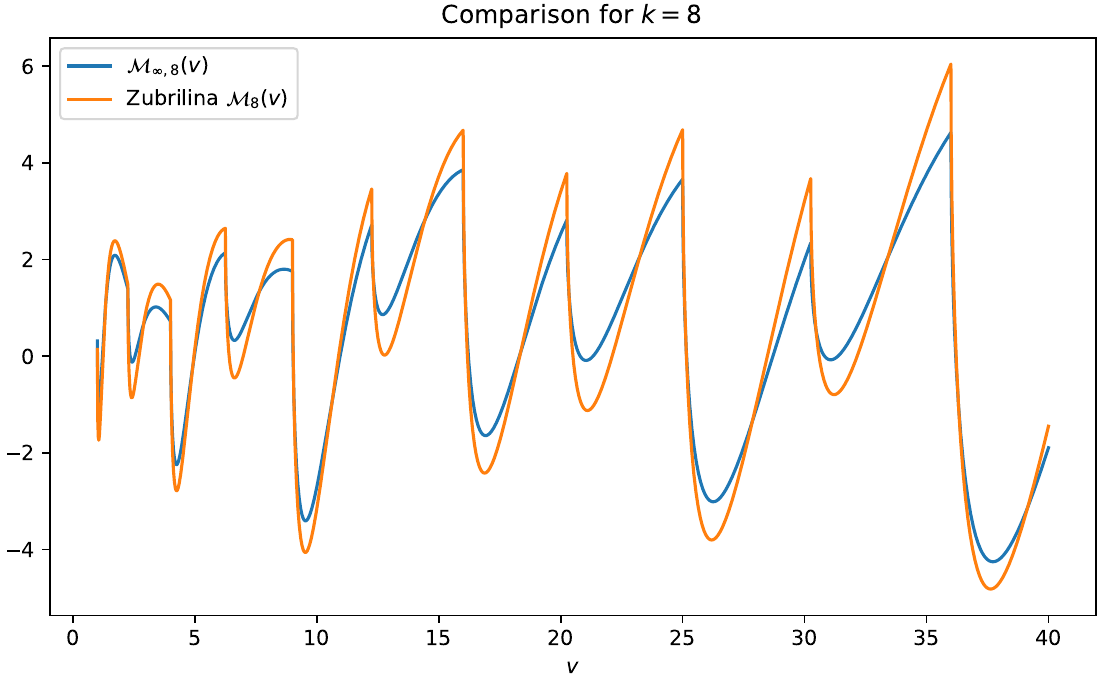}
    \caption{Comparison of our $\MM_{\infty, 8}$ with $\MM_8$ in \cite{ZubM}.}
\end{figure}

\section{Acknowledgements}
I thank my advisor Sug Woo Shin for suggesting this problem to me and for many helpful conversations.

\section{Trace Formula Results}\label{section:tfs}
In order to prove our results we need explicit formulas for both the numerator and denominator. We will record them here for use in the proof of the main theorems in the next section. Their proofs are deferred to \cref{section:tf_proofs}. Let $\ell$ be an odd prime and $k\geq2$ an even integer. $\chi_\ell:=\legendrebig{-}{\ell}$ denotes the quadratic Dirichlet character mod $\ell$. If $D$ is a quadratic discriminant and $D=df^2$ where $d$ is a fundamental discriminant, then let $\psi_D(n) = \legendre{d}{n/\gcd(n,f)}$ for $n\in\Z$. Although this is not a character if $f>1$, we can still define $L(1,\psi_D)=\sum_{n=1}^\infty\frac{\psi_D(n)}n$. We let $H(D)$ be the Hurwitz class number, which is $H(D) = \sum_{m\mid f}\frac{h(\O_{dm^2})}{\fabs{\O_{dm^2}^\times/\pm1}}$, the sum of all weighted class numbers of quadratic orders containing the order of discriminant $D$.
\begin{proposition}\label{proposition:GL2_TF}
For $a\geq2$ and any prime $P\ne\ell$ we have:\begin{align*}
    &\sum_{\substack{f\in H^\new_k(\ell^{2a})}}\epsi(f)\sqrt P\lambda_P(f) \\
    &= (-1)^{k/2+1}\left(\frac{\ell-\chi_\ell(-P)}{2\ell}\sum_{t\in\Z, t^2<4P}U_{k-2}\left(\frac{t}{2\sqrt{P}}\right)c_{\ell^a}(t)H(t^2-4P) + \frac{\ell-1}\ell c_{\ell^a}(P+1)P^{1-k/2}\right)\\
    &=(-1)^{k/2+1}\Bigg(\frac{\ell-\chi_\ell(-P)}{2\pi\ell}\sum_{t\in\Z, t^2<4P}\sqrt{4P-t^2}U_{k-2}\left(\frac{t}{2\sqrt{P}}\right)c_{\ell^a}(t)L(1,\psi_{t^2-4P})\\&\hspace{10em}+ \frac{\ell-1}\ell c_{\ell^a}(P+1)P^{1-k/2}\Bigg).
\end{align*}
\end{proposition}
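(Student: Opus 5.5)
The plan is to write the root number in terms of the Atkin--Lehner involution and then apply the Skoruppa--Zagier formula. For $f\in H_k^\new(\ell^{2a})$ with trivial character we have $\epsi(f)=i^k\eta_f$, where $\eta_f=\pm1$ is the eigenvalue of $W_{\ell^{2a}}$. Hence the left-hand side equals
\[
(-1)^{k/2}\sqrt P\,\tr\bigl(T_P\circ W_{\ell^{2a}}\mid S_k^\new(\ell^{2a})\bigr).
\]
So the statement reduces to computing $\tr(T_PW_N)$ on the new subspace. For this I would use the Yamauchi--Skoruppa--Zagier trace formula \cite{SkoZagJFMF} for $\tr(T_PW_N)$ on the full space $S_k(\Gamma_0(N))$, as in \cite[Proposition 2.1]{BKLMYMurms}. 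It expresses the trace as a sum over $t\in\Z$ with $t^2<4P$ of $U_{k-2}(t/2\sqrt P)$ times a weighted count. That count is a Hurwitz class number $H(t^2-4P)$ twisted by an $\ell$-adic local factor, which counts solutions of the characteristic polynomial $x^2-tx+P$ in $\Z/\ell^a$ with the appropriate compatibility. In addition there is a hyperbolic/degenerate term coming from $t=\pm(P+1)$, which produces the $P^{1-k/2}$ term.

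Next I would do the newform extraction. By Atkin--Lehner theory, $S_k(\ell^{2a})=\bigoplus_{j\le 2a}\bigoplus_{0\le i\le 2a-j}S_k^\new(\ell^j)|B_{\ell^i}$. On the block coming from a fixed newform $g$ of level $\ell^j$, the involution $W_{\ell^{2a}}$ acts as $\eta_g$ times the order-reversing permutation $i\mapsto 2a-j-i$, and it commutes with $T_P$ since $P\ne\ell$. The trace of this permutation is $1$ if $2a-j$ is even and $0$ otherwise. Therefore
\[
\tr(T_PW_{\ell^{2a}}\mid S_k(\ell^{2a}))=\sum_{0\le b\le a}\tr(T_PW_{\ell^{2b}}\mid S_k^\new(\ell^{2b})),
\]
and the new trace equals the full trace at level $\ell^{2a}$ minus the full trace at level $\ell^{2a-2}$.

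The main computation, and the step I expect to be the obstacle, is evaluating this difference of the two $\ell$-adic local factors for each $t$. I would first split according to the $\ell$-adic valuation of $t^2-4P$ and of $t$. I would then count roots $x\bmod \ell^{a}$ (versus $\ell^{a-1}$) of $x^2-tx+P\equiv0$ together with the auxiliary conductor condition. The goal is to show that the difference collapses to $\frac{\ell-\chi_\ell(-P)}{2\ell}\,c_{\ell^a}(t)$, where $c_{\ell^a}$ is the Ramanujan sum, which is nonzero only when $\ell^{a-1}\mid t$. I would check the boundary case $a=2$ separately. The degenerate terms are handled the same way: the difference produces $\frac{\ell-1}{\ell}c_{\ell^a}(P+1)P^{1-k/2}$, and the constant ($k=2$ Eisenstein) corrections cancel between the two levels. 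Multiplying by $(-1)^{k/2}$, absorbing the sign $-1$ from the formula, and rescaling by $\sqrt P$ gives the first equality.

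The second equality then follows termwise from the class number formula $H(t^2-4P)=\frac{\sqrt{4P-t^2}}{2\pi}L(1,\psi_{t^2-4P})$. This identity is obtained by summing Dirichlet's formula $h(\O_{dm^2})/|\O_{dm^2}^\times/\pm1|=\frac{\sqrt{|d|}m}{2\pi}L(1,\chi_d)\prod_{p\mid m}(1-\chi_d(p)/p)$ over $m\mid f$, and using the identity $\sum_{m\mid f}m\prod_{p\mid m}(1-\chi_d(p)p^{-1})\cdot L(1,\chi_d)=f\,L(1,\psi_{df^2})$, which is checked prime by prime on Euler factors.
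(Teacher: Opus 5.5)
Your framework matches the paper's proof. You express the root number via the $W_{\ell^{2a}}$-eigenvalue. You write the new trace as the full trace at level $\ell^{2a}$ minus the full trace at level $\ell^{2a-2}$; your oldform-block argument for this inversion is fine. The $\one_{k=2}(P+1)$ terms cancel, and the hyperbolic terms give $-\varphi(\ell^a)\one_{\ell^a\mid P+1}+\varphi(\ell^{a-1})\one_{\ell^{a-1}\mid P+1}=-\frac{\ell-1}{\ell}c_{\ell^a}(P+1)$. (With the classical $T_P$ of \cite{SkoZagJFMF}, the prefactor is $(-1)^{k/2}P^{(2-k)/2}$, not $\sqrt P$.)

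The gap is the step you flag as the obstacle, and the mechanism you propose there would not work. The formula \cite[\S2 (7)]{SkoZagJFMF} for $\tr(T_P\circ W_{\ell^{2a}}\mid S_k(\ell^{2a}))$ contains no count of roots of $x^2-tx+P$. Its elliptic part is
\[
-\frac12\sum_{r\in\{2a,2a-2\}}\mu(\ell^{a-r/2})\sum_{\substack{s^2<4\ell^rP\\ \ell^{a+r/2}\mid s}}P^{(k-2)/2}U_{k-2}\Big(\frac{s}{2\sqrt{\ell^rP}}\Big)H(s^2-4\ell^rP),
\]
so the $\ell$-adic information sits in non-fundamental discriminants. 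Putting $s=\ell^{r/2}t$ and subtracting the level-$\ell^{2a-2}$ formula yields $\frac12P^{(k-2)/2}\sum_{t^2<4P}U_{k-2}(\frac{t}{2\sqrt P})H_t$, where $D_t=t^2-4P$ and
\[
H_t=\one_{\ell^{a-1}\mid t}\big(H(\ell^{2a-2}D_t)-H(\ell^{2a-4}D_t)\big)-\one_{\ell^a\mid t}\big(H(\ell^{2a}D_t)-H(\ell^{2a-2}D_t)\big).
\]
The missing ingredient is the relation $H(\ell^{2j}D)-H(\ell^{2j-2}D)=\ell^j\big(1-\ell^{-1}\legendre{D}{\ell}\big)H(D)$ for $\ell\nmid D$ and $j\geq1$ \cite[Theorem 7.24]{CoxPF}.

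Because $a\geq2$, the condition $\ell^{a-1}\mid t$ forces $\ell\mid t$. Hence $\ell\nmid D_t$ and $\legendre{D_t}{\ell}=\chi_\ell(-P)$, which gives $H_t=-\frac{\ell-\chi_\ell(-P)}{\ell}c_{\ell^a}(t)H(D_t)$ at once. This is exactly where $a\geq2$ is used, so $a=2$ needs no separate treatment.

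Root counting cannot produce this factor. For $\ell\mid t$ the discriminant is an $\ell$-adic unit, so by Hensel $x^2-tx+P$ has exactly $1+\chi_\ell(-P)$ roots modulo every $\ell^j$. The counts you propose to compare are therefore all equal. That root count is the local factor of the Eichler--Selberg formula for $T_P$ alone. For $T_P\circ W_{\ell^{2a}}$, the factor $\ell-\chi_\ell(-P)$ is instead a relative class number of the order of conductor $\ell^j$.

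Finally, your class number constant is off by a factor of $2$. With the weighting $1/\fabs{\O^\times/\pm1}$ used in the paper,
\[
h(\O_{dm^2})/\fabs{\O_{dm^2}^\times/\pm1}=\frac{\sqrt{\abs d}\,m}{\pi}L(1,\chi_d)\prod_{p\mid m}(1-\chi_d(p)p^{-1}).
\]
For $d=-4$, $m=1$ both sides are $\frac12$. Hence $H(D)=\frac{\sqrt{\abs D}}{\pi}L(1,\psi_D)$; the constant $\frac1{2\pi}$ goes with $h/\fabs{\O^\times}$. Your version would put $\frac{\ell-\chi_\ell(-P)}{4\pi\ell}$ in the second line instead of $\frac{\ell-\chi_\ell(-P)}{2\pi\ell}$. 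Your Euler-factor identity $\sum_{m\mid f}m\prod_{p\mid m}(1-\chi_d(p)p^{-1})\,L(1,\chi_d)=f\,L(1,\psi_{df^2})$ is correct.
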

Here \[
    c_{\ell^a}(t)=\begin{cases}
        \ell^{a-1}(\ell-1)&\text{if $v_\ell(t)> a-1$},\\
        -\ell^{a-1}&\text{if $v_\ell(t)= a-1$},\\
        0&\text{if $v_\ell(t)< a-1$}.
    \end{cases}
\]
We obtain a similar expression for the quaternion algebra case. First, some notation. Let $D$ be a division quaternion algebra over a finite extension $F$ of $\Q_p$. Let $U_D^0 = \O_D^\times$ and $U_D^n = 1+\fP_D^n$ for $n>0$. Then if $\pi$ is an irreducible admissible representation of $D^\times$, its \textit{conductor exponent} $n(\pi)$ is defined to be the smallest $n\geq1$ such that $\pi$ is trivial on $U_D^{n-1}$. Note that in the language of \cite{BusHenLL}, $n(\pi)-2$ is the level of $\pi$. The conductor of $\pi$ is $\fp_F^n$. It agrees with the conductor of its Jacquet--Langlands transfer.

Now let $D$ be the unique quaternion algebra over $\Q$ that is ramified at $\{\infty, \ell\}$. If $\pi$ is an automorphic representation of $D^\times$, we define its conductor to be the product of the conductors of the local representations $\pi_p$. For $p\ne\ell$ we have $D_p^\times\cong\GL_2(\Q_p)$ and the conductor is defined to be the smallest $p^{n}$ such that $\pi_p$ has a non-zero $K_0(p^n)$-fixed vector. For $p=\ell$, the conductor is defined as above.
\begin{proposition}\label{proposition:quat_TF}
For $a\geq2$ and any prime $P\ne\ell$ we have:\begin{align*}
    &\sum_{\pi\in\FF_{D, k}(\ell^{2a})}\epsi(\pi)\sqrt P\lambda_P(\pi) \\
    &= \frac{(-1)^{k/2+1}}4(1+\ell^{-1})(1-\chi_\ell(-P))\sum_{t\in\Z, t^2<4P}U_{k-2}\left(\frac{t}{2\sqrt{P}}\right)c_{\ell^a}(t)H(t^2-4P)\\
    &=\frac{(-1)^{k/2+1}}{4\pi}(1+\ell^{-1})(1-\chi_\ell(-P))\sum_{t\in\Z, t^2<4P}\sqrt{4P-t^2}U_{k-2}\left(\frac{t}{2\sqrt{P}}\right)c_{\ell^a}(t)L(1,\psi_{t^2-4P}).
\end{align*}
\end{proposition}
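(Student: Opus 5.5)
Since $D^\times/\Q^\times$ is compact modulo center at infinity and $\pi_\infty$ is finite-dimensional, I will apply the simple compact-quotient trace formula. The test function is $\phi=\phi_\infty\otimes\phi_\ell\otimes\phi_P\otimes\bigotimes_{p\ne\ell,P}\one_{\GL_2(\Z_p)\Z_p^\times}$. Here $\phi_\infty$ is the character of $\Sym^{k-2}$ (suitably normalized) and $\phi_P$ is the normalized characteristic function of the double coset $\GL_2(\Z_P)\operatorname{diag}(P,1)\GL_2(\Z_P)$. The key point is $\phi_\ell$: it must project each $\pi_\ell$ of conductor $\ell^{2a}$ onto a scalar equal to $\epsi(\pi_\ell)$ (times a fixed dimension-type normalization), and kill everything else. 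Since $\ell$ is fixed and $P\ne\ell$, the global root number factors as $\epsi(\pi)=\epsi_\infty\epsi_\ell(\pi_\ell)$ with $\epsi_\infty=(-1)^{k/2}$. The spectral side is then $(-1)^{k/2}\sum_\pi \epsi(\pi)\sqrt P\lambda_P(\pi)$, up to volume constants that I will fix at the end.

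To build $\phi_\ell$, I use the Fröhlich–Queyrut/Tunnell type formula on $D_\ell^\times$. For $\pi_\ell$ with trivial central character, $\epsi(\tfrac12,\pi_\ell)$ is determined by the action of a uniformizer $\varpi_D$ of $D_\ell$: $\varpi_D^2\in\ell\O_D^\times$, and $\pi_\ell(\varpi_D)$ acts on the relevant isotypic pieces by $\pm$ the root number. Concretely, I take
\[
\phi_\ell=\frac{1}{\vol(U_D^{2a-2})}\bigl(\one_{\varpi_D U_D^{2a-2}}-\one_{\varpi_D U_D^{2a-1}}\cdot[\,U_D^{2a-2}:U_D^{2a-1}]^{-1}\cdots\bigr),
\]
that is, an inclusion–exclusion between level $2a-1$ and level $2a$ translated by $\varpi_D$. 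This isolates representations with $n(\pi)=2a$ exactly and weights each by its trace of $\varpi_D$ on the $U_D^{2a-1}$-fixed vectors. I then check, using the Bushnell–Henniart description of level-$(2a-2)$ representations of $D_\ell^\times$ (odd level, so they are induced from characters of $E^\times U_D^{a}$ with $E/\Q_\ell$ unramified), that this trace equals $\epsi(\pi_\ell)\dim\pi_\ell$-proportional with a uniform constant.

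On the geometric side, $\phi_\ell$ is supported on $\varpi_D\O_D^\times$, which has reduced norm of valuation $1$. Combined with $\phi_P$, the elements $\gamma\in D^\times/\Q^\times$ contributing have norm $P$ up to $\Q^\times$ scaling. I will instead rescale by the center so that the norm is $\ell P$ and the trace is $t'=\ell t$-compatible. Only elliptic $\gamma$ contribute, because $D$ is division and $\gamma\notin\Q^\times$ since its norm is not a square. The quantity $\tr(\phi_\ell)$-orbital integrals will be computed as follows. The condition that $\Q_\ell(\gamma)$ embeds with $\gamma$ of odd valuation forces $\Q_\ell(\gamma)$ to be ramified. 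This is where the factor $1-\chi_\ell(-P)$ appears, as the local condition that $t^2-4P$ (after unwinding the $\varpi_D$-twist) be nonsquare/ramified at $\ell$. The orbital integral of the inclusion–exclusion function then counts the depth $v_\ell$ of $\gamma-$(center) and yields exactly $c_{\ell^a}(t)$, a difference of two indicator conditions $v_\ell(t)\ge a$ and $v_\ell(t)\ge a-1$. The orbital integrals away from $\ell$ are the standard Hecke-operator ones and assemble with the global volume into the Hurwitz class number $H(t^2-4P)$, exactly as in Eichler's classical computation. The $\infty$ contribution gives $U_{k-2}(t/2\sqrt P)$ up to the normalizing $\sqrt P$ factor. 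Finally, $L(1,\psi_D)$ replaces $H$ via the class number formula $H(D)=\frac{\sqrt{|D|}}{2\pi}L(1,\psi_D)$.

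I expect the main obstacle to be the local computation at $\ell$. This has two parts. First, I must verify that the chosen $\phi_\ell$ really has trace $\epsi(\pi_\ell)$ times a constant independent of $\pi_\ell$; this needs the explicit root number of the Jacquet–Langlands transfer in terms of the inducing character, and I would first try the Tunnell/Fröhlich–Queyrut identity $\epsi(\pi)=\chi(\varpi_E)$-type formulas. Second, I must evaluate its elliptic orbital integrals to get $\tfrac14(1+\ell^{-1})(1-\chi_\ell(-P))c_{\ell^a}(t)$. I would do the latter by writing $\gamma=\varpi_D u$ and computing $\{x\in D_\ell^\times/\Q_\ell(\gamma)^\times: x^{-1}\gamma x\in\varpi_D U_D^{m}\}$ for $m=2a-2,2a-1$ directly via the filtration.
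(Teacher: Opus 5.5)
There is a genuine gap, and it sits at the local step you flagged: your $\phi_\ell$ is supported on elements of the wrong parity of valuation. For $a\geq2$, a representation $\pi_\ell$ of $D_\ell^\times$ with trivial central character and conductor exponent $2a$ has \emph{even} level $2a-2$, not odd as you wrote. As you say yourself, it is built from the unramified quadratic extension $E/\Q_\ell$. Since $E^\times\subset\Q_\ell^\times\O_D^\times$, it is induced from an open compact-mod-centre subgroup $J$ contained in the index-two normal subgroup $\Q_\ell^\times\O_D^\times$. Equivalently, $\pi_\ell|_{\Q_\ell^\times\O_D^\times}=\sigma\oplus\sigma^{\varpi_D}$ with $\pi_\ell(\varpi_D)$ swapping the two summands; this is the factor $2$ in $\dim\pi_\ell=2\ell^{a-1}$. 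By the induced-character formula, the character of $\pi_\ell$ vanishes on all elements of odd valuation. Hence $\tr\pi_\ell(\varpi_D u)=0$ for every $u\in\O_D^\times$, and any function supported on $\varpi_D\O_D^\times$ has trace $0$ on every member of $\FF_{D,k}(\ell^{2a})$. The root number is simply invisible there.

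The geometric side shows the same problem. With $\phi_\ell$ supported on odd valuation, the contributing classes have $\nrd\gamma\in\ell P\,\Q^{\times2}$. No rescaling by the centre turns this into norm $P$, because central elements change the norm by squares. You would get discriminants $t^2-4\ell P$, a ramified $\Q_\ell(\gamma)$, and no $\chi_\ell$-factor. That is the shape of the odd-exponent formula, not of the statement.

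The missing ingredient is a formula for $\epsi(\pi_\ell)$ on the \emph{even}-valuation shell, and this requires an additive twist rather than translates of congruence subgroups. The paper uses Bushnell--Henniart's Gauss-sum identity. For $\psi$ of level one and $n(\pi)=n$, it reads $\int_{v_D(x)=-(n-2)}\pi(x)\psi(\trd x)\,\mathrm dx=\vol(U_D^{n-1})\ell^{n-1}\epsi(\pi)\id_\pi$, and the integral vanishes when $n(\pi)\ne n$. For $n=2a$, modulo the centre, this gives the conjugation-invariant function $T(g)=\int_{\Z_\ell^\times}\psi\bigl(u\,\ell^{-(v_D(g)+2a-2)/2}\trd g\bigr)\,\mathrm du$ on even-valuation $g$, normalized by $d_{2a}^{-1}(\ell^2-1)\ell^{2a-3}$ with $d_{2a}=2\ell^{a-1}$.

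Its orbital integral at $\gamma$ with $\nrd\gamma=P$ and $\trd\gamma=t$ is $T(\gamma)\vol(G_\gamma\lquot G)$. Here $T(\gamma)$ equals $1$, $-\frac1{\ell-1}$ or $0$ according as $v_\ell(t)\geq a$, $v_\ell(t)=a-1$ or $v_\ell(t)<a-1$. This is where $c_{\ell^a}(t)$ comes from. It is a condition on $\trd\gamma$, not on how close $\gamma$ is to the centre; indeed for such $\gamma$ the discriminant $t^2-4P$ is an $\ell$-unit. For the same reason $\Q_\ell(\gamma)$ is unramified or split. So $1-\chi_\ell(-P)\in\{2,0\}$ is the centralizer covolume in the unramified case, resp.\ the non-embeddability of a split field into $D_\ell$; it is not a ramification condition. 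The resulting local factor is $\frac{1+\ell^{-1}}2(1-\chi_\ell(-P))c_{\ell^a}(t)$, and combined with the $\frac12H(t^2-4P)$ coming from the global orders it gives the stated coefficient.

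Two smaller slips. With $H(D)=\sum h(\O)/\fabs{\O^\times/\pm1}$ one has $H(D)=\frac{\sqrt{\fabs D}}{\pi}L(1,\psi_D)$, so your version is off by a factor $2$. Also, if you detect the $D$-side root number at $\ell$ (the negative of the transfer's), the compensating archimedean sign is $(-1)^{k/2+1}$, not $(-1)^{k/2}$.
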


We will also need to count the cusp forms. In the modular forms case this follows immediately from formulas in \cite{MarDims}:
\begin{proposition}\label{proposition:GL2_count}
    For $a\geq2$ we have: \begin{align*}
        \sum_{\substack{f\in H^\new_k(\ell^{2a})}}1 &= \frac{k-1}{12}\ell^{2a-3}(\ell-1)^2(\ell+1)-\frac12\ell^{a-2}(\ell-1)^2,\\
        \sum_{\substack{f\in H^\new_k(\ell^{2a+1})}}1 &= \frac{k-1}{12}\ell^{2a-2}(\ell-1)^2(\ell+1).
    \end{align*}
\end{proposition}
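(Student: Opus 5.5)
The plan is to derive both formulas from the classical dimension formula for $S_k(\Gamma_0(N))$ by Möbius inversion over the prime power. Since $S_k(\Gamma_0(\ell^n))=\bigoplus_{0\le m\le n}\bigoplus_{d\mid \ell^{n-m}}S_k^\new(\ell^m)|_{B_d}$, writing $d_m=\dim S_k(\Gamma_0(\ell^m))$ we get
\begin{align*}
\dim S_k^\new(\ell^n)=d_n-2d_{n-1}+d_{n-2}\qquad (n\geq2).
\end{align*}
Equivalently, we can read off the formula from \cite{MarDims}, where the new dimension is written as a multiplicative main term plus elliptic, cusp and $k=2$ corrections. I will carry out the second-difference computation, since it is self-contained.

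\textbf{Step 1: the dimension formula.} For $k\geq2$ even, $d_m$ equals
\[
\frac{k-1}{12}\psi(\ell^m)-\frac{c(\ell^m)}2+\alpha_k\,\nu_2(\ell^m)+\beta_k\,\nu_3(\ell^m)+\delta_{k=2}.
\]
Here $\psi(\ell^m)=\ell^{m-1}(\ell+1)$ for $m\geq1$ and $\psi(1)=1$. The number of cusps is $c(\ell^m)=\sum_{d\mid \ell^m}\varphi(\gcd(d,\ell^m/d))$. The numbers $\alpha_k,\beta_k$ depend only on $k$. For $m\geq1$ and $\ell$ odd, $\nu_2(\ell^m)=1+\chi_\ell(-1)$. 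Also $\nu_3(\ell^m)=1+\legendrebig{-3}{\ell}$, except that $\nu_3(3^m)=0$ for $m\geq2$. For $k=2$ the same shape holds: the genus formula plus the $\delta_{k=2}$ term, and the cusp term enters linearly with the same coefficient $-\tfrac12$.

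\textbf{Step 2: constants cancel.} The exponents are $n=2a\geq4$ and $n=2a+1\geq5$, so $n-2\geq2$. Hence $\nu_2(\ell^m)$, $\nu_3(\ell^m)$ and $\delta_{k=2}$ are constant in $m\in\{n-2,n-1,n\}$, including in the exceptional case $\ell=3$, where $\nu_3=0$ throughout this range. The second-difference coefficients $1-2+1=0$ then kill all these terms. This is the one place where the hypothesis $a\geq2$ is used.

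\textbf{Step 3: the main term.} The index term gives
\[
\frac{k-1}{12}(\ell+1)\left(\ell^{n-1}-2\ell^{n-2}+\ell^{n-3}\right)=\frac{k-1}{12}\ell^{n-3}(\ell-1)^2(\ell+1).
\]
This is exactly the first term in both formulas, with $n=2a$ and $n=2a+1$ respectively.

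\textbf{Step 4: the cusp term.} I compute $c(\ell^m)$ explicitly. For $m=2b$,
\[
c(\ell^{2b})=2\sum_{j<b}\varphi(\ell^j)+\varphi(\ell^b)=2\ell^{b-1}+\ell^{b-1}(\ell-1)=\ell^{b-1}(\ell+1).
\]
For $m=2b+1$,
\[
c(\ell^{2b+1})=2\sum_{j\le b}\varphi(\ell^j)=2\ell^b.
\]
For $n=2a$ the second difference is
\[
c(\ell^{2a})-2c(\ell^{2a-1})+c(\ell^{2a-2})=\ell^{a-1}(\ell+1)-4\ell^{a-1}+\ell^{a-2}(\ell+1)=\ell^{a-2}(\ell-1)^2,
\]
so the cusp term contributes $-\tfrac12\ell^{a-2}(\ell-1)^2$. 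For $n=2a+1$ the second difference is
\[
2\ell^a-2\ell^{a-2}(\ell+1)\ell+2\ell^{a-1}=0.
\]
These give the stated formulas.

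The main obstacle is bookkeeping rather than ideas. One must check that the elliptic terms are genuinely constant over the three levels involved, especially for $\ell=3$. One must also check that for $k=2$ the residual $\delta_{k=2}$ term (from the genus formula) cancels rather than contributing. Both hold because $n-2\geq2$.
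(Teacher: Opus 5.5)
Your proposal is correct. The second difference $d_n-2d_{n-1}+d_{n-2}$ of the classical dimension formula produces both stated formulas, with the elliptic terms and the $\delta_{k=2}$ term cancelling because $n-2\ge 2$; this includes the case $\ell=3$. The paper simply cites the newform dimension formulas of \cite{MarDims}, which rest on exactly this inversion (a multiplicative kernel with $\beta(p)=-2$, $\beta(p^2)=1$), so your argument is the same route written out in full.
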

For quaternion algebras we will prove:
\begin{proposition}\label{proposition:quat_count}
    For $a\geq2$ we have:
    \begin{align*}
     \sum_{\pi\in\FF_{D, k}(\ell^{2a})}1 &=\frac{k-1}{24}\ell^{2a-3}(\ell-1)^2(\ell+1),\\
    \sum_{\pi\in\FF_{D, k}(\ell^{2a+1})}1 &=\frac{k-1}{12}\ell^{2a-2}(\ell-1)^2(\ell+1).
\end{align*}
\end{proposition}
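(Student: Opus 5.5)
By the Jacquet--Langlands correspondence (third remark after \cref{theorem:n_limit}), $\FF_{D,k}(\ell^n)$ is in Hecke-equivariant bijection with the set of $f\in H_k^\new(\ell^n)$ whose local component $\pi_{f,\ell}$ is a discrete series representation of $\GL_2(\Q_\ell)$. Conductors are preserved under this bijection, as noted before \cref{proposition:quat_TF}. I will therefore compute both counts from \cref{proposition:GL2_count} by subtracting the forms whose component at $\ell$ is not discrete series. With trivial central character, the irreducible infinite-dimensional non-discrete-series representations of $\GL_2(\Q_\ell)$ are the principal series $\pi(\mu,\mu^{-1})$. Such a representation has conductor exponent $2c(\mu)$. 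In particular it is never of odd conductor exponent.

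\emph{Odd exponent.} For $n=2a+1\ge 5$ every $f\in H^\new_k(\ell^{2a+1})$ has discrete series component at $\ell$. Hence the second formula is exactly the second line of \cref{proposition:GL2_count}.

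\emph{Even exponent, setup.} For $n=2a$ with $a\geq2$, note first that Steinberg twisted by a ramified quadratic character has conductor exponent $2$, so it does not occur. It therefore suffices to show
\[
N_{\mathrm{ps}}(a):=\#\{f\in H_k^\new(\ell^{2a}) : \pi_{f,\ell}\cong\pi(\mu,\mu^{-1}),\ c(\mu)=a\} = \frac{k-1}{24}\ell^{2a-3}(\ell-1)^2(\ell+1)-\frac12\ell^{a-2}(\ell-1)^2 .
\]
The first formula then follows by subtracting $N_{\mathrm{ps}}(a)$ from the first line of \cref{proposition:GL2_count}.

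\emph{Counting the principal series forms.} I will twist by $\mu$. If $\pi_{f,\ell}=\pi(\mu,\mu^{-1})$, then $f\otimes\mu$ has local component $\pi(\mu^2,1)$. This is a newform of level $\ell^{a}$ with nebentypus $\mu^2$, or of lower level if $\mu^2$ has smaller conductor. Conversely, each such newform $g$ with $\pi_{g,\ell}=\pi(\nu,1)$ and a choice of square root $\mu$ of $\nu$ with $c(\mu)=a$ gives back a form of this type. Since $\ell$ is odd, the relevant characters are determined by $\mu^2$ up to the quadratic twist. So $N_{\mathrm{ps}}(a)$ becomes a sum over characters $\mu$ mod $\ell^a$ of exact conductor $\ell^a$, taken modulo $\mu\sim\mu^{-1}$. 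Each summand counts newforms with the appropriate nebentypus and a principal series component at $\ell$. These counts can be read off from the Cohen--Oesterl\'e type dimension formulas in \cite{MarDims}.

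The volume terms give $\frac{k-1}{12}\ell^{a-1}(1+\ell^{-1})\cdot\frac12\varphi(\ell^a)\cdot(\cdots)$, which should produce the main term $\frac{k-1}{24}\ell^{2a-3}(\ell-1)^2(\ell+1)$. The correction term $-\frac12\ell^{a-2}(\ell-1)^2$ should come from the elliptic and cusp contributions, together with the fixed points of the involution $\mu\mapsto\mu^{-1}$.

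\emph{Alternative check.} Independently, I would run the simple trace formula on $D^\times$ with test function $\one_{U_D^{m}}$ at $\ell$, for $m=2a-1$ and $m=2a-2$. For $m\ge1$ the elliptic (finite-order) terms vanish because $U_D^1$ is pro-$\ell$ and contains no torsion, so only the mass term survives. This gives $\sum_\pi \dim \pi_\ell^{U_D^m}$ explicitly. Combining it with the known dimensions of level-$r$ representations of $D_\ell^\times$ from \cite{BusHenLL} gives the count by a difference.

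\emph{Main obstacle.} I expect the hardest step to be the exact bookkeeping in $N_{\mathrm{ps}}(a)$. The difficulties are handling the cases $\mu^2$ of conductor smaller than $\ell^a$, and the identifications $\mu\sim\mu^{-1}$, so that the lower-order term comes out exactly as $-\frac12\ell^{a-2}(\ell-1)^2$. I will use the $D^\times$ trace-formula computation as a cross-check on this bookkeeping.
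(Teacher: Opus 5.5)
Your argument is correct in outline but takes a different route from the paper. In fact, your ``alternative check'' is essentially the paper's proof.

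\textbf{The paper's route.} The paper never passes to $\GL_2$. It runs the simple trace formula on $D^\times/Z$ with local test function $\varphi_\ell=d_n^{-1}(e_{n-1}-e_{n-2})$. Here $e_m$ is the normalized indicator of the image $K_m$ of $U_D^m$ in $D_\ell^\times/\Q_\ell^\times$, and $d_n$ is the common dimension of all conductor-$n$ representations, taken from \cite{AnaPatTPPQA}. Only the identity term survives, and the Eichler mass $\frac{\ell-1}{12}$ times $(k-1)\varphi_\ell(1)$ gives both parities at once.

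Your reason for the vanishing of the elliptic terms needs a small repair. $U_D^1$ contains $\zeta_3$ when $\ell=3$, and what matters is torsion modulo the centre. However, the image of $U_D^m$ is torsion-free for $m\geq2$, which covers your case $m=2a-2$.

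\textbf{Your route.} You use Jacquet--Langlands to reduce to $\#H_k^{\new}(\ell^n)$ minus the ramified principal series forms. This makes the odd case immediate from \cref{proposition:GL2_count}. In the even case you compute the principal series count $N_{\mathrm{ps}}(a)$ directly and deduce the quaternionic count from it. This is the reverse of the paper, which deduces the principal series proportion (its corollary) from the quaternionic count. The price is the global correspondence plus dimension formulas for individual nebentypus spaces. These come from Cohen--Oesterl\'e rather than \cite{MarDims}. The paper's argument, by contrast, is intrinsic to $D^\times$ and reuses the machinery of \cref{proposition:quat_TF}.

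\textbf{Closing the step you left at ``should''.} The computation does close, but not for the reasons you anticipate.
\begin{itemize}
\item For $a\geq2$, squaring is bijective on $(1+\ell^{a-1}\Z_\ell)/(1+\ell^{a}\Z_\ell)$, a group of odd order $\ell$. Hence $c(\mu^2)=a$ always, and there are no lower-conductor cases.
\item Since $\mu^2\neq1$, the involution $\mu\mapsto\mu^{-1}$ acts freely on the $\ell^{a-2}(\ell-1)^2$ primitive characters mod $\ell^a$. So there are no fixed points.
\item Twisting by the Dirichlet character attached to $\mu^{-1}$ identifies the forms with $\pi_{f,\ell}\cong\pi(\mu,\mu^{-1})$ (with $\mu|_{\Z_\ell^\times}$ fixed up to inversion) with all of $S_k(\ell^a,\mu^2)$. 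Every element of that space is new and principal series at $\ell$. Therefore $N_{\mathrm{ps}}(a)=\frac12\sum_{\mu}\dim S_k(\ell^a,\mu^2)$, summed over primitive $\mu$.
\end{itemize}

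Now apply Cohen--Oesterl\'e to each $S_k(\ell^a,\mu^2)$.
\begin{itemize}
\item The volume term is $\frac{k-1}{12}\ell^{a-1}(\ell+1)$. Your factor $\ell^{a-1}(1+\ell^{-1})$ is off by $\ell$.
\item The cusp term is $-\frac12\cdot2=-1$, because $2c(\mu^2)>a$.
\item The weight $2-k$ correction vanishes because $\mu^2\neq1$.
\item The elliptic terms are $k$-dependent constant multiples of $(1+\chi_\ell(-1))\mu(-1)$ and of $\sum_x\mu(x)^{-1}$, where $x$ runs over the roots of $x^2+x+1\equiv0\bmod\ell^a$.
\end{itemize}
The elliptic terms cancel after summing over primitive $\mu$, by orthogonality. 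Indeed, $\sum_{\mu\ \mathrm{primitive}}\mu(y)$ is the full character sum mod $\ell^a$ minus the one mod $\ell^{a-1}$. Both vanish for $y=-1$ and $y=x^{-1}$, since these are $\not\equiv1\bmod\ell$. (For $\ell=3$ and $a\geq2$ there are no roots $x$ at all.)

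Hence
\[
N_{\mathrm{ps}}(a)=\tfrac12\,\ell^{a-2}(\ell-1)^2\Bigl(\tfrac{k-1}{12}\,\ell^{a-1}(\ell+1)-1\Bigr),
\]
which is exactly your target. The lower-order term comes entirely from the cusps. There are no fixed points, no lower-conductor cases, and no net elliptic contribution.
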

As a consequence of the Jacquet--Langlands correspondence we get:
\begin{corollary}
    Asymptotically half of all newforms of level $\ell^{2a}$ have a ramified principal series component at $\ell$. More precisely, we have \begin{align*}
        \lim_{\ell^a\to\infty}\frac{1}{\#H_k^\new(\ell^{2a})}\sum_{\substack{f\in H_k^\new(\ell^{2a})\\\text{$\pi_\ell(f)$ is principal series}}} 1= \lim_{\ell^a\to\infty}\frac{1}{\#H_k^\new(\ell^{2a})}\sum_{\substack{f\in H_k^\new(\ell^{2a})\\\text{$\pi_\ell(f)$ is supercuspidal}}}1=\frac12.
    \end{align*}
    Here $\pi_\ell(f)$ denotes the $\ell$-adic component of the automorphic representation attached to $f$.
\end{corollary}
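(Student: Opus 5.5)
The plan is to compare the two counting formulas, \cref{proposition:GL2_count} and \cref{proposition:quat_count}, through the Jacquet--Langlands correspondence, and then take the limit.

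\textbf{Step 1: split the newforms.} Let $f\in H_k^\new(\ell^{2a})$ with $a\geq2$. The local component $\pi_\ell(f)$ has trivial central character and conductor exponent $2a\geq4$. Such a representation cannot be a twist of Steinberg, since twists of Steinberg with trivial central character have conductor exponent $1$ or $2$. Hence $\pi_\ell(f)$ is either a ramified principal series $\pi(\mu,\mu^{-1})$ or supercuspidal, and this splits $H_k^\new(\ell^{2a})$ into two disjoint pieces.

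\textbf{Step 2: count the supercuspidal piece.} By the Jacquet--Langlands correspondence (as recalled in the remarks after \cref{theorem:n_limit}), the newforms with discrete series component at $\ell$ are in bijection with $\FF_{D,k}(\ell^{2a})$. This bijection preserves the weight, via $\pi_\infty\cong\Sym^{k-2}(\C^2)$. It also preserves the conductor: away from $\ell$ the local components agree, and at $\ell$ the local Jacquet--Langlands transfer preserves conductors, as stated in \cref{section:tfs}. Combined with Step 1, the number of supercuspidal newforms is $\#\FF_{D,k}(\ell^{2a})$. By \cref{proposition:quat_count} this equals
\[
\frac{k-1}{24}\ell^{2a-3}(\ell-1)^2(\ell+1).
\]

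\textbf{Step 3: take the limit.} By \cref{proposition:GL2_count},
\[
\#H_k^\new(\ell^{2a})=\frac{k-1}{12}\ell^{2a-3}(\ell-1)^2(\ell+1)-\frac12\ell^{a-2}(\ell-1)^2.
\]
Dividing the count from Step 2 by this gives
\[
\frac12\left(1-\frac{6\,\ell^{a-2}(\ell-1)^2}{(k-1)\,\ell^{2a-3}(\ell-1)^2(\ell+1)}\right)^{-1}
=\frac12\left(1-\frac{6}{(k-1)\,\ell^{a-1}(\ell+1)}\right)^{-1}.
\]
For $a\geq2$ the correction term satisfies $\frac{6}{(k-1)\ell^{a-1}(\ell+1)}\ll \ell^{-a}$, so it tends to $0$ whenever $\ell^a\to\infty$, whether $\ell$ or $a$ grows. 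The supercuspidal proportion therefore tends to $\tfrac12$. The principal series proportion is the complement, by Step 1, and so also tends to $\tfrac12$.

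\textbf{Main obstacle.} The one point needing care is the exact identification of sets in Step 2: that the conductor defined for $D_\ell^\times$ via the filtration $U_D^n$ matches the $\GL_2$ conductor under Jacquet--Langlands, and that no Steinberg-type forms occur at exponent $2a\geq4$. Both facts were stated earlier in the paper, so the rest is arithmetic.
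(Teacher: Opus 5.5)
Your proposal is correct and takes the same approach as the paper. The paper also identifies the supercuspidal newforms with $\FF_{D,k}(\ell^{2a})$ via Jacquet--Langlands, using that this set has no one-dimensional members and that no Steinberg twists with trivial central character have conductor exponent $\geq 3$. It then compares \cref{proposition:quat_count} with \cref{proposition:GL2_count}. Your explicit ratio $\frac12\bigl(1-\frac{6}{(k-1)\ell^{a-1}(\ell+1)}\bigr)^{-1}$ is correct and makes the limit transparent.
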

To justify this, it suffices to note that $\FF_{D, k}(\ell^{2a})$ does not contain any one-dimensional representations and that there are no Steinberg twists of $\GL_2(\Q_\ell)$ with trivial central character and conductor exponent $\geq3$.

\subsection{Maass forms}\label{section:maass_props}
We first discuss some properties of the archimedean test function. For a continuous $\SO(2)$-biinvariant function $f : \PGL_2(\R)^+\to\C$ let $V_f\in C([0,\infty))$ be the unique function such that \begin{align*}
    f(g) = V_f\left(\frac{\tr g^tg}{\det g}-2\right).
\end{align*}
Then let $Q_f\in C([0,\infty))$ be the Abel transform of $V_f$, so \begin{align*}
    Q_f(u) = \int_\R V_f(u+x^2)\mathrm dx.
\end{align*}
Let $g_f\in C(\R)$ be defined by $g_f(v) = Q_f(e^v+e^{-v}-2)$. Then the Selberg transform of $f$ is defined to be \begin{align*}
    h_f(t) = \int_\R g_f(v)e^{ivt}\mathrm dv.
\end{align*}
It is an even function on $\R$. We assume that $h_f$ extends to a holomorphic function on the strip $\{z\in \C:\abs{\Im z}< A\}$ with $A>1$, satisfying $\abs{h_f(z)}\ll (1+\abs z)^{-B}$ for some $B>2$.

For later we need:
\begin{proposition}\label{proposition:Qf_bound}
    We have \begin{align*}
        \abs{Q_f(u)}+u\fabs{Q_f'(u)}\ll_{f}(1+u)^{-A}.
    \end{align*}
\end{proposition}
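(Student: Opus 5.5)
The plan is to invert the chain $Q_f \mapsto g_f \mapsto h_f$ and move the decay of $h_f$ in the strip over to $Q_f$. By Fourier inversion,
\[
g_f(v) = \frac1{2\pi}\int_\R h_f(t)e^{-ivt}\,\mathrm dt .
\]
This integral converges absolutely because $B>2>1$, and it defines an even, continuous function on $\R$.

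\textbf{Decay of $g_f$.} Since $h_f$ is holomorphic on $\{\abs{\Im z}<A\}$ and satisfies $\abs{h_f(z)}\ll(1+\abs z)^{-B}$ uniformly there, we may shift the contour. For $v>0$ we move to $\Im t=-A'$ with $1<A'<A$; for $v<0$ we use $\Im t = A'$. The horizontal segments at $\pm\infty$ vanish by the decay bound. This gives
\[
\abs{g_f(v)}\ll_{A'} e^{-A'\abs v}.
\]
Differentiating under the integral, $g_f'(v) = \frac{-i}{2\pi}\int t\,h_f(t)e^{-ivt}\,\mathrm dt$. This is absolutely convergent because $B>2$, and it is here that the hypothesis $B>2$ (rather than $B>1$) is used. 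The same contour shift gives $\abs{g_f'(v)}\ll e^{-A'\abs v}$.

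\textbf{Transfer to $Q_f$.} For $u\ge0$, write $u=e^v+e^{-v}-2$ with $v\ge0$; equivalently $u=4\sinh^2(v/2)$. Then $Q_f(u)=g_f(v)$. For large $u$ we have $e^{v}\asymp u$, so $\abs{Q_f(u)}\ll u^{-A'}$. For the derivative, $Q_f'(u)\,(e^v-e^{-v}) = g_f'(v)$, hence
\[
u\,Q_f'(u) = g_f'(v)\,\frac{e^v+e^{-v}-2}{e^v-e^{-v}} = g_f'(v)\tanh(v/2).
\]
Since $\tanh(v/2)$ is bounded, this gives $\abs{uQ_f'(u)}\ll e^{-A'v}\ll(1+u)^{-A'}$ for large $u$. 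It also handles $u\to0$: the factor $\tanh(v/2)\to0$ removes the apparent singularity of $Q_f'$ coming from $\mathrm du/\mathrm dv = e^v-e^{-v}\to0$. Boundedness near $0$ then follows from the boundedness of $g_f$ and $g_f'$.

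\textbf{Main obstacle.} The exponent. The shift only reaches $A'<A$, so this argument gives $(1+u)^{-A'}$ for every $A'<A$, not $(1+u)^{-A}$ itself. If the exact exponent $A$ is needed, I would first try to get it by noting that the strip hypothesis presumably holds on a slightly wider strip, or by redefining $A$. Since only some exponent $>1$ is needed downstream (to sum $Q_f(r^2/(\ell^2v))$ over $r$), I would state the bound with $A$ replaced by any $A'\in(1,A)$. If the paper insists on $A$, I would re-run the argument with a slightly larger strip constant, absorbing this into the convention for $A$.

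A secondary check is that the identity $Q_f(e^v+e^{-v}-2)=g_f(v)$ really recovers $Q_f$ on all of $[0,\infty)$. It does, because $v\mapsto e^v+e^{-v}-2$ is a bijection from $[0,\infty)$ onto $[0,\infty)$. Differentiability of $Q_f$ for $u>0$ then follows from differentiability of $g_f$ together with the inverse function theorem.
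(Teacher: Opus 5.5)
Your argument is correct. The paper gives no proof of its own: it simply cites \cite[Proposition 8.15]{KniLiKTF}. What you wrote (Fourier inversion, a contour shift into the strip to get exponential decay of $g_f$ and $g_f'$, then transfer via $u=4\sinh^2(v/2)$ and $uQ_f'(u)=g_f'(v)\tanh(v/2)$) is the standard self-contained proof of that bound.

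The one correction concerns your ``main obstacle'': there is no loss in the exponent. The hypothesis $\abs{h_f(z)}\le C(1+\abs z)^{-B}$ holds with a single constant $C$ on the whole open strip. On the line $\Im z=-A'$ you have $\abs z\ge\abs{\Re z}$ and $\abs z(1+\abs z)^{-B}\le(1+\abs z)^{1-B}$. So for $v>0$,
\[
\abs{g_f(v)}\le \frac{C}{2\pi}e^{-A'v}\int_\R(1+\abs t)^{-B}\,\mathrm dt,\qquad \abs{g_f'(v)}\le \frac{C}{2\pi}e^{-A'v}\int_\R(1+\abs t)^{1-B}\,\mathrm dt ,
\]
and the constants on the right do not depend on $A'\in(0,A)$.

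Letting $A'\to A$ gives $\abs{g_f(v)}+\abs{g_f'(v)}\ll e^{-Av}$ for $v\ge0$. So you need neither to widen the strip nor to weaken the statement to $A'<A$. Changing the hypothesis, as you suggest, would not be legitimate anyway.

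Finally, $1+u=e^v+e^{-v}-1\le e^v$ for $v\ge0$, so $e^{-Av}\le(1+u)^{-A}$ for every $u\ge0$. This handles the large-$u$ and small-$u$ ranges at once, for both $Q_f(u)$ and $uQ_f'(u)$.
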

This follows from \cite[Proposition 8.15]{KniLiKTF}.
\begin{proposition}\label{proposition:Qf_log_bound}
    For $u\geq1$ we have \begin{align*}
        \int_\R V_f(u+x^2)\log\left(1+\frac{x^2}{u}\right)\mathrm dx \ll_{f}(1+u)^{-A}.
    \end{align*}
\end{proposition}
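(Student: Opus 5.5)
We prove \cref{proposition:Qf_log_bound} by reducing it to the decay of $V_f$ itself, which comes from the same input as \cref{proposition:Qf_bound}. The relation $Q_f(u)=\int_\R V_f(u+x^2)\,\mathrm dx$ is an Abel transform, so it inverts as $V_f(w) = -\frac1\pi\int_\R Q_f'(w+y^2)\,\mathrm dy$. Combined with the bound $u\fabs{Q_f'(u)}\ll(1+u)^{-A}$ of \cref{proposition:Qf_bound}, for $w\geq1$ this gives
\begin{align*}
  \abs{V_f(w)} \ll \int_\R (w+y^2)^{-1-A}\,\mathrm dy \ll w^{-A-\frac12}.
\end{align*}
If the derivative bound is not quite enough to justify differentiating under the integral, we instead quote the pointwise decay of $V_f$ directly from the same source, \cite[Proposition 8.15]{KniLiKTF}. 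There the decay of $g_f$ (coming from the holomorphy of $h_f$ in the strip $\abs{\Im z}<A$ via a contour shift) transfers to $V_f(w)\ll (1+w)^{-A-1/2}$.

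With $\abs{V_f(w)}\ll (1+w)^{-A-1/2}$ in hand, the integral is estimated directly. For $u\geq1$ we substitute $x=\sqrt u\,y$:
\begin{align*}
  \int_\R \abs{V_f(u+x^2)}\log\Bigl(1+\frac{x^2}{u}\Bigr)\mathrm dx
  \ll \sqrt u\int_\R \bigl(u(1+y^2)\bigr)^{-A-\frac12}\log(1+y^2)\,\mathrm dy
  \ll u^{-A}\int_\R (1+y^2)^{-A-\frac12}\log(1+y^2)\,\mathrm dy.
\end{align*}
The last integral converges because $A>1$ (indeed $A>0$ suffices), so the whole expression is $\ll(1+u)^{-A}$, as claimed.

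The main obstacle is the first step: obtaining the pointwise bound $V_f(w)\ll w^{-A-1/2}$ with exactly the right exponent. The crude alternative of bounding $\log(1+x^2/u)\ll_\delta (x^2/u)^\delta$ and comparing with $Q_f$ does not work, because it requires control of $\int \abs{V_f}$ rather than of $V_f$ with its oscillation intact. So the inversion formula, or the precise statement in \cite{KniLiKTF}, has to deliver decay of $V_f$ that is stronger than the decay of $Q_f$ by half a power of $w$. If only $V_f(w)\ll w^{-A}$ were available, the same substitution would give $u^{-A+1/2}$, which is too weak. In that case I would replace $A$ by a slightly smaller $A'>1$ in the hypotheses; the later applications need only some exponent larger than $1$.
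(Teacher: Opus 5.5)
Your proof is correct and is essentially the paper's: the paper also quotes $V_f(w)\ll(1+w)^{-A-\frac12}$ from \cite{KniLiKTF} (as Proposition 8.16, not 8.15) and then substitutes, using $x=(1+u)^{1/2}y$ and $\frac{1+u}{u}\leq 2$ where you use $x=\sqrt u\,y$ and $u^{-A}\ll(1+u)^{-A}$. Your Abel-inversion derivation of the $V_f$ bound from the $Q_f'$ estimate in \cref{proposition:Qf_bound} also works once the inversion formula is justified, so the fallback in your last paragraph is not needed.
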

\begin{proof}
   By \cite[Proposition 8.16]{KniLiKTF} we have \begin{align*}
    V_f(u)\ll (1+u)^{-A-\frac12},
   \end{align*}
   so \begin{align*}
    \abs{\int_\R V_f(u+x^2)\log\left(1+\frac{x^2}{u}\right)\mathrm dx}&\ll\int_\R (1+u+x^2)^{-A-\frac12}\log\left(1+\frac{x^2}{u}\right)\mathrm dx\\
    &=(1+u)^{-A}\int_\R(1+y^2)^{-A-\frac12}\log\left(1+\frac{1+u}{u}y^2\right)\mathrm dy.
   \end{align*}
   In the last step we substituted $x = (1+u)^{1/2}y$. Note that for $u\geq1$ we have $\frac{1+u}{u}\leq 2$ and the last integral converges.
\end{proof}

Now we can state the trace formulas.
\begin{proposition}\label{proposition:maass_TF}
    For $a\geq1$ we have \begin{align*}
        \sum_{\phi\in H_\maass^\new(\ell^{2a+1})}h_f(t_\phi)\epsi(\phi)\sqrt P \lambda_P(\phi)= \sqrt{\frac{P}{\ell}}\sum_{\substack{t\in\Z}}Q_f\left(\frac{t^2}{\ell P}\right)L(1,\psi_{t^2 + 4\ell P})c_{\ell^{a+1}}(t).
    \end{align*}
    For $a\geq2$ we have for the even exponents:  \begin{align*}
        \sum_{\phi\in H_\maass^\new(\ell^{2a})}h_f(t_\phi)\epsi(\phi)\sqrt P \lambda_P(\phi)= \sqrt P \frac{\ell-\chi_\ell(P)}{\ell}\sum_{\substack{t\in\Z\\t^2+4P\notin\Q^{\times2}}}Q_f\left(\frac{t^2}{P}\right)L(1,\psi_{t^2 + 4P})c_{\ell^a}(t) +\mathrm{E}_{P},
    \end{align*}
    where, under GRH for Dirichlet $L$-functions, the error satisfies \begin{align*}
        \sum_{\substack{P/\ell^{2a}\in E\\\text{$P$ prime}}}\mathrm{E}_{P}\log P\ll_{E, f} \ell^{3a}a^3\log^3\ell.
    \end{align*}
\end{proposition}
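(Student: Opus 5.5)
We will derive both identities from the adelic Arthur–Selberg trace formula for $\PGL_2$ over $\Q$, choosing the test function $\varphi=f\otimes\varphi_P\otimes\varphi_\ell\otimes\bigotimes_{p\ne\ell,P}\one_{\PGL_2(\Z_p)}$. Here $f$ is the given archimedean function, and $\varphi_P$ is the normalized characteristic function of the double coset of $\mathrm{diag}(P,1)$, which yields $\sqrt P\lambda_P(\phi)$. The local function $\varphi_\ell$ must act on $\pi_\ell$ by the trace of $\epsi(\pi_\ell)$ times the projection onto newvectors when $n(\pi_\ell)=n$, and by $0$ otherwise.

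For odd $n=2a+1$, every $\pi_\ell$ of conductor $\ell^{2a+1}$ is supercuspidal, so the relevant representations have nonzero fixed vectors under $K_1(\ell^{n})$ only. We will take $\varphi_\ell$ to be the Atkin–Lehner element $W_{\ell^{n}}=\begin{pmatrix}0&1\\ \ell^{n}&0\end{pmatrix}$ times a combination of characteristic functions of $K_0(\ell^{m})$ that acts as the newform projector. The eigenvalue of $W$ on the newvector is $\epsi(\pi_\ell)$, and since $P\ne\ell$ and we are at level $1$ elsewhere, this gives the global root number up to the factor $(-1)$ from infinity. The support then forces the rational conjugacy classes $\gamma$ to have determinant $\ell P$ (up to squares), that is, characteristic polynomial $x^2-tx-\ell P$. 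This is where the discriminant $t^2+4\ell P$ enters. Because $t^2+4\ell P$ is never a square when $v_\ell$ is odd, no hyperbolic or unipotent terms survive, and the identity contribution vanishes because $\det\ne$ square.

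For each elliptic $\gamma$, we will compute the following ingredients:
\begin{enumerate}[(i)]
    \item The archimedean orbital integral. Since $\gamma$ is hyperbolic in $\PGL_2(\R)$ because its determinant is negative, this equals $Q_f(t^2/(\ell P))$ up to normalization.
    \item The unramified orbital integrals at $p\ne\ell,P$ and at $P$. The standard computation of these, combined with the volume of the torus, assembles into the class number expression, which we rewrite via the class number formula as $\sqrt{P/\ell}\,L(1,\psi_{t^2+4\ell P})$.
    \item The orbital integral at $\ell$ of the newform-projected Atkin–Lehner function, which we expect to produce exactly $c_{\ell^{a+1}}(t)$. Inclusion–exclusion over the levels $\ell^{n},\ell^{n-1},\ell^{n-2}$ creates the two-step difference $\ell^{a}-\ell^{a-1}$ vs.\ $-\ell^{a}$ pattern.
\end{enumerate}

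For even $n=2a$, $\pi_\ell$ can be supercuspidal or ramified principal series $\chi\otimes\chi^{-1}$ with $c(\chi)=a$. Here we will instead use a function supported on $\GL_2(\Z_\ell)$-type cosets of determinant $P$, analogous to \cref{proposition:GL2_TF}. The factor $\frac{\ell-\chi_\ell(P)}{\ell}$ should come from the $\ell$-adic orbital integral depending on whether $t^2+4P$ splits at $\ell$ when $\ell\mid t$. In this case $t^2+4P$ can be a square, so the geometric side has hyperbolic terms as well as the continuous spectrum and weighted orbital integrals. We will collect all of these, together with the identity term (absent since $P$ is not a square), into $\mathrm E_P$.

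The main obstacle is bounding $\sum_P\mathrm E_P\log P$. The continuous spectrum involves Eisenstein series induced from characters $\chi$ of conductor $\ell^a$ with $\chi^2$ possibly trivial, and produces terms like $L'/L(1+2it,\chi^2)$ and $\chi(P)$-twisted sums. Under GRH, we will bound $\sum_{P\sim\ell^{2a}}\chi(P)\log P\ll\ell^{a}\log^2(\ell^a)$. Summing over the roughly $\ell^{a}$ characters, and using $\ll a\log\ell$ from logarithmic derivatives, should give the claimed $\ell^{3a}a^3\log^3\ell$. The hyperbolic terms with $t^2+4P=m^2$ are handled by counting factorizations $P=(m-t)(m+t)/4$, which forces $m\pm t\in\{2,2P\}$, giving $O(1)$ terms per $P$ each of size $\ll\sqrt P\,\ell^{a}$. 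This is also within the bound.
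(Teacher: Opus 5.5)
The first genuine gap is the archimedean component. You take $\varphi_\infty=f$ and say the archimedean place contributes a fixed sign $(-1)$. That is the holomorphic picture. For Maass forms $\epsi(\pi_\infty)=(-1)^\delta$, where $\delta\in\{0,1\}$ is the parity of the principal series $\pi_\infty$, so even and odd forms in $H_\maass^\new(\ell^n)$ carry opposite archimedean signs. Your spectral side would therefore be $\sum_\phi h_f(t_\phi)\epsi_\ell(\phi)\sqrt P\lambda_P(\phi)$, not the sum in the statement.

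This choice also contradicts your own step (i). Since $f$ lives on $\PGL_2(\R)^+$, its orbital integrals vanish on classes of negative determinant. With $\varphi_\infty=f$ the contributing classes would have characteristic polynomial $x^2-tx+\ell^mP$ and discriminant $t^2-4\ell^mP$.

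The missing idea is $\varphi_\infty(g)=f(gj)$ with $j=\mathrm{diag}(1,-1)$, supported on the non-identity component. Since $j$ acts on the $\PSO(2)$-fixed line by $\epsi(\pi_\infty)$, one gets $\tr\pi_\infty(\varphi_\infty)=\epsi(\pi_\infty)h_f(t_{\pi_\infty})$. This support is precisely what forces $\det\gamma=-\ell^mP$, hence the discriminant $t^2+4\ell^mP$, split at $\infty$, with orbital integral given by \cref{lemma:arch_hyperbolic_orbital_integral}. It also kills the identity and unipotent terms.

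Your description at $\ell$ is also off. The right functions are $e_m=\vol(\overline{K_0(\ell^m)})^{-1}\one_{w_{\ell^m}\overline{K_0(\ell^m)}}$, whose trace is $\epsi(\pi_\ell)$ exactly when $n(\pi_\ell)\le m$ and $n(\pi_\ell)\equiv m\bmod 2$ (\cref{lemma:local_intertwiner_en}). So one uses $e_n-e_{n-2}$ for both parities of $n$: level $\ell^{n-1}$ must not enter, and no separate function is needed for even $n$. Nor does $c_{\ell^{a+1}}(t)$ come out of the $\ell$-adic orbital integral, which only imposes $\ell^m\mid t$ and restricts to orders with $\O_\ell=\Z_\ell[\gamma]$. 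The factors $c_{\ell^{a+1}}(t)$ and $\frac{\ell-\chi_\ell(P)}{\ell}c_{\ell^a}(t)$ appear only after three steps: write the restricted order sum as a difference of two full sums, each of the form $L(1,\psi_D)\sqrt D$; combine $m=n$ with $m=n-2$; and rescale $t$.

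The second genuine gap is the error term. Hyperbolic terms of size $\sqrt P\,\ell^a$ for each $P$ sum, with weight $\log P$ over $P\in\ell^{2a}E$, to $O(\ell^{4a})$. That is the order of the main term itself, not $O(\ell^{3a}a^3\log^3\ell)$.

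What makes the hyperbolic contribution negligible is archimedean decay. The only split classes are $\mathrm{diag}(-P^{\pm1},1)$. Their unweighted and weighted archimedean orbital integrals are $Q_f(u)/\sqrt{u+4}$ and its $\log$-weighted analogue at $u=(P-1)^2/P\asymp P$. By \cref{proposition:Qf_bound} and \cref{proposition:Qf_log_bound} these are $\ll P^{-A-1/2}$, which absorbs the factors $(P+1)\log P$ and $a(\log\ell)\ell^a$ from the finite places.

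For the continuous spectrum you only handle the $m'/m$ part. The term $\tr\bigl(R_\ell^{-1}R_\ell'\,\pi_{\eta_{\chi,s}}(e_n-e_{n-2})\bigr)$ must also be computed. This requires knowing that the normalized local intertwiner acts on the basis $\pi(\mathrm{diag}(1,\ell^k))v_0$ of $K_0(\ell^m)$-fixed vectors by $C_\chi(s)\ell^{-2ks}$ (\cref{lemma:local_intertwiner_en}). That gives $C_\chi'/C_\chi=-2a\log\ell$ when $c(\chi)=\ell^a$, and after character orthogonality in $P$ a contribution $\ll a\ell^{3a}\log\ell$. The same lemma is what shows that all Eisenstein terms vanish for odd $n$, a case your proposal does not address.
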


We also need the following count:
\begin{proposition}\label{proposition:maass_count}
    For $n\geq2$ we have \begin{align*}
        \sum_{\phi\in H_\maass^\new(\ell^{n})}h_f(t_\phi)= \frac\pi3 f(1)\ell^{n-3}(\ell+1)(\ell-1)^2 + O_f(n(\log\ell)\ell^{n/2}).
    \end{align*}
\end{proposition}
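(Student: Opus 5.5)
We will obtain \cref{proposition:maass_count} from the Arthur--Selberg trace formula for $\PGL_2$ over $\Q$, using the same test functions as in the proof of \cref{proposition:maass_TF} but with the Hecke operator at $P$ replaced by the identity. At $\infty$ we take $f$. At $p\ne\ell$ we take the characteristic function of $\PGL_2(\Z_p)$. At $\ell$ we take a function $\varphi_n$ whose trace on an irreducible generic representation $\pi_\ell$ with trivial central character equals $1$ if $n(\pi_\ell)=n$ and $0$ otherwise.

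To build $\varphi_n$, we use the standard newform theory. The dimension of the $K_0(\ell^m)$-fixed space in $\pi_\ell$ is $\max(0,m-n(\pi_\ell)+1)$. Hence the combination
\[
e_{K_0(\ell^n)}-2e_{K_0(\ell^{n-1})}+e_{K_0(\ell^{n-2})}
\]
of normalized idempotents has the required trace on generic representations. This is exactly the inclusion--exclusion alluded to in the remark following \cref{theorem:ell_limit}. The one-dimensional representations must be handled separately. For $n\geq2$, their $K_0(\ell^m)$-invariants are all $1$-dimensional, so their trace under this combination vanishes. Consequently the residual spectrum drops out. Similarly, the continuous spectrum enters only through the unramified characters at $\ell$.

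We then expand the geometric side.

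\textbf{Identity term.} This equals $\vol(\PGL_2(\Q)\lquot\PGL_2(\A))\,f(1)\,\varphi_n(1)$. The constant $\varphi_n(1)$ is
\[
[K:K_0(\ell^n)]-2[K:K_0(\ell^{n-1})]+[K:K_0(\ell^{n-2})].
\]
Using $[K:K_0(\ell^m)]=\ell^{m-1}(\ell+1)$ for $m\geq1$, together with the volume normalization that gives $\frac\pi3$, this produces the main term $\frac\pi3f(1)\ell^{n-3}(\ell+1)(\ell-1)^2$. Here we must take care at $n=2$, where $[K:K_0(\ell^0)]=1$ alters the count. Any discrepancy of size $O(\ell)$ there is absorbed into the error term.

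\textbf{Remaining terms.} These are the elliptic, hyperbolic and unipotent terms together with the continuous spectrum, and we aim to show they are $O_f(n\log\ell\,\ell^{n/2})$.

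\emph{Elliptic terms.} These are indexed by $\gamma$ with $\det\gamma=1$ up to scalars, i.e.\ trace $t$ with $t^2<4$. Only the finitely many classes with $t\in\{0,\pm1\}$ occur, since we work in $\PGL_2$ with $\det\gamma$ a unit. For each such class, the orbital integral of $e_{K_0(\ell^m)}$ counts fixed points on $\PGL_2(\Q_\ell)/K_0(\ell^m)$, i.e.\ optimal embeddings. This count is bounded by $O(1)$ for $\ell$ inert or ramified, and by $O(\ell^{m/2})$ at worst in the split case. We should get an $O(\ell^{n/2})$ contribution even uniformly in $\ell$, matching the shape of the $c_{\ell^a}$-weights.

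\emph{Hyperbolic, unipotent and continuous terms.} The hyperbolic terms are identically zero because $\det\gamma=1$ forces $t^2-4$ to be a square. What remains are the unipotent (weighted) term and the continuous spectrum. The continuous spectrum involves Eisenstein series induced from characters $\chi_1\otimes\chi_2$ with $\chi_1\chi_2=1$ and conductor $\ell^{n/2}$ (the principal series with $n(\pi_\ell)=n$). The number of such characters is $O(\ell^{n/2})$. The contribution of each is controlled by $\int h_f(r)\frac{L'}{L}(1+2ir,\chi^2)\,dr$ together with the logarithmic derivative of the local intertwining operator. This gives $O_f(\log(\ell^n))=O_f(n\log\ell)$ per character.

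The unipotent term likewise reduces to an integral involving $\log$-weights. We will bound it using \cref{proposition:Qf_log_bound} together with local unipotent orbital integrals of $e_{K_0(\ell^m)}$ of size $O(\ell^{m/2}\cdot m\log\ell)$. Combined, these yield the stated $O_f(n(\log\ell)\ell^{n/2})$.

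\textbf{Main obstacle.} We expect the hardest step to be the precise evaluation and cancellation of the unipotent and continuous contributions under the inclusion--exclusion. The individual terms for each $K_0(\ell^m)$ are of size roughly $\ell^m\log\ell$, so they would dominate the error bound if bounded separately. We would first try to compute the three terms exactly via the explicit $\Gamma_0(N)$ scattering determinants and unipotent constants, for example Selberg's formula for $\Gamma_0(N)$. We then expect the $\ell^m$-size pieces to cancel in the second difference, leaving only the newform-level Eisenstein contribution of size $\ell^{n/2}\cdot n\log\ell$.
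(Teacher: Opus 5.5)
\textbf{The gap: the elliptic terms.} Your setup matches the paper's: the test function $\varphi^{(n)}-2\varphi^{(n-1)}+\varphi^{(n-2)}$, the identity term as the main term, vanishing of the one-dimensional and $\Q$-split hyperbolic terms, and direct bounds for the unipotent and Eisenstein terms. The problem is in the elliptic terms.

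In the adelic trace formula, ``elliptic'' means elliptic over $\Q$. The support conditions force a representative with $\det\gamma=1$ and $t=\tr\gamma\in\Z$. Such a $\gamma$ is $\Q$-elliptic for every $t\neq\pm2$, since $t^2-4$ is never a nonzero rational square. So the elliptic sum runs over all of $\Z\setminus\{\pm2\}$, not just $t\in\{0,\pm1\}$.

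For $\abs t\geq3$, the field $\Q(\sqrt{t^2-4})$ is real quadratic and $\gamma$ is split over $\R$. Because $\varphi_\infty=f$ is not a discrete-series pseudo-coefficient, its orbital integral there is $Q_f(t^2-4)/\sqrt{t^2-4}$, which is nonzero in general. These are exactly Selberg's classical hyperbolic terms for $\Gamma_0(\ell^n)$. Your two claims (elliptic means $t^2<4$; hyperbolic terms vanish) together discard all of them.

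They do not disappear under the second difference either:
\begin{itemize}
\item If $\ell\nmid t^2-4$, the $\ell$-adic orbital integral of $e_{K_0(\ell^m)}$ is $1+\legendre{t^2-4}{\ell}$ for $m\geq1$ and $1$ for $m=0$. For $n=2$ this leaves $-\legendre{t^2-4}{\ell}$.
\item If $\ell$ divides $t^2-4$ to high order, the local factors grow with the $\ell$-part of the conductor of the order.
\end{itemize}

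What is needed, and what the paper supplies, is a bound uniform in $t$ and in the order. Put $s=v_\ell([\O:\Z[\gamma_{t,1}]])$. Then the $\ell$-adic factor satisfies $A_\ell(\O)B_\ell(\O)\ll\ell^{(m+s)/2}$. Sum this over the orders $\Z[\gamma_{t,1}]\subset\O\subset\O_L$ with the class-number weights, using $L(1,\psi_D)\ll\log\abs D$ and divisor bounds. This gives $\ll_\epsi\ell^{m/2}(1+\abs t)^{1+\epsi}$ for each $t$. The decay $Q_f(u)\ll(1+u)^{-A}$ with $A>1$ then makes the sum over $t$ converge, giving $O_f(\ell^{n/2})$.

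\textbf{The obstacle you single out is not real.} No cancellation of $\ell^m$-size pieces is needed. For a single $e_{K_0(\ell^m)}$, the local unipotent integrals already satisfy $Z_{F_\ell}(1)\ll\ell^{m/2}$ and $Z_{F_\ell}'(1)\ll m\log\ell\,\ell^{m/2}$, as you state yourself earlier. This reflects the $\asymp\ell^{m/2}$ cusps of $\Gamma_0(\ell^m)$, not $\ell^m$.

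The Eisenstein terms can be bounded character by character. The term with the local intertwining log-derivative at $\ell$ involves every $\chi$ with $c(\chi)\mid\ell^{\lfloor n/2\rfloor}$, not only those with $c(\chi)^2=\ell^n$. Each such $\chi$ contributes $O(n\log\ell)$ against $\int\abs{h_f}$, and there are $O(\ell^{n/2})$ of them. So the parabolic and continuous parts go through by direct estimation. The missing ingredient is the bound on the real-quadratic elliptic contribution.
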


\section{Prime Averaging Results}
The main tool used in the proof is \cite[Lemma 4.5]{BBLLDMurms} and certain analogues. Define the averages \begin{align*}
    \wtilde\psi_t(m)&:=\frac1{\varphi(m^2)}\sum_{\substack{n\mod m^2\\ (n,m)=1}}\psi_{t^2-4n}(m),\\
    \wtilde\psi_{t, \chi_\ell}(m) &:= \frac1{\varphi(\ell m^2)}\sum_{\substack{n\mod \ell m^2\\(n, \ell m)=1}}\chi_\ell(-n)\psi_{t^2-4n}(m),\\
    \wtilde\psi_t^{(\ell)+}(m)&:=\frac{1}{\varphi(m^2)}\sum_{\substack{n\mod m^2\\(n,m)=1}}\psi_{t^2+4\ell n}(m),\\
    \wtilde\psi_{t}^{+}(m)&:=\frac{1}{\varphi(m^2)}\sum_{\substack{n\mod m^2\\(n,m)=1}}\psi_{t^2+4n}(m),\\
    \wtilde\psi_{t,\chi_\ell}^{+}(m)&:=\frac{1}{\varphi(\ell m^2)}\sum_{\substack{n\mod \ell m^2\\(n,\ell m)=1}}\chi_\ell(n)\psi_{t^2+4n}(m).
\end{align*}
We use the term ``holomorphic'' to describe both modular forms and automorphic forms on the quaternion algebra. In the ``holomorphic'' case we will consider prime powers $\ell^{2a}$ with $a\geq2$, while in the ``Maass odd'' (resp.\ ``Maass even'') case we will consider prime powers $\ell^{2a+1}$ (resp.\ $\ell^{2a}$).
\begin{lemma}\label{lemma:prime_Phi_average}
    Let $t\in\Z$, $A,B\in\R$ with $A<B$, and $\Phi\in C^1([A,B])$. Set $M=\norm\Phi_\infty$ and $V = \norm{\Phi'}_1$. In the ``holomorphic'' case we also assume $\frac{t^2}4< A$. Then, assuming GRH for Dirichlet $L$-functions:
    \begin{align*}
        \sideset{}{^{*}}\sum_{\substack{P\in [A,B]\\\text{$P$ prime}}}\omega(P)L(1, \psi_{D(P)})\Phi(P)\log P = L(1,\wtilde \psi_t^*)\int_A^B\Phi(u)\mathrm du + O_\epsi(RM^{4/5}(M+V)^{1/5}B^{9/10+\epsi}),
    \end{align*}
    where:
    \[
    \renewcommand{\arraystretch}{1.25}
    \begin{array}{l|l|l|l|l|l}
        \text{Case} & D(P) & \omega(P) & \wtilde\psi_t^* & R & \Sigma^*\\
        \hline
        \text{holomorphic} & t^2-4P & 1 & \wtilde\psi_t & 1 & -\\
        \text{holomorphic twisted} & t^2-4P & \chi_\ell(-P) & \wtilde\psi_{t,\chi_\ell} & \ell^{1/5} & -\\
        \text{Maass odd} & t^2+4\ell P & 1 & \wtilde\psi_t^{(\ell)+} & 1 & \text{$t^2+4\ell P\ne\square$}\\
        \text{Maass even} &t^2+4P & 1 & \wtilde\psi_t^{+} & 1 & \text{$t^2+4P\ne\square$}\\
        \text{Maass even twisted} & t^2+4P & \chi_\ell(P) & \wtilde\psi_{t,\chi_\ell}^{+} & \ell^{1/5} &\text{$t^2+4P\ne\square$}
    \end{array}
    \]
    Here $\Sigma^*$ indicates that we impose the indicated condition on $P$ in the sum.
\end{lemma}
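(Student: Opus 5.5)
The plan is to follow the proof of \cite[Lemma 4.5]{BBLLDMurms}, adapting it to each row of the table. Write $D(P)=d_Pf_P^2$ and expand $L(1,\psi_{D(P)})=\sum_{m\geq1}\psi_{D(P)}(m)/m$. Truncate at a parameter $Y$: the tail $\sum_{m>Y}$ is bounded under GRH by the GLH-strength estimate (as in \cite[Lemma 4.4]{BBLLDMurms}), roughly $\ll Y^{-1/2}B^{\epsi}$ per $P$. Since $\psi_D(m)$ is not a character when $f>1$, we treat the tail via the factorization $L(1,\psi_D)=L(1,\chi_d)\cdot(\text{finite Euler correction over }p\mid f)$. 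The correction is $\ll B^\epsi$, and GRH for $L(s,\chi_d)$ gives the tail bound. Summed over $P\le B$ with weight $M$, the tail therefore costs $O(MBY^{-1/2}B^\epsi)$.

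For the head $m\le Y$, the key observation is that $n\mapsto\psi_{D(n)}(m)$ is periodic in $n$: modulo $m^2$ in the untwisted cases (modulo $4m^2$ up to harmless $2$-adic adjustments, absorbed into the definition as in \cite{BBLLDMurms}), and modulo $\ell m^2$ after multiplying by $\chi_\ell(\pm n)$ in the twisted cases. Split the prime sum over residue classes $n$ mod $q$ (with $q=m^2$ or $\ell m^2$), and apply the GRH prime number theorem in progressions together with partial summation against $\Phi$:
\[
\sum_{\substack{P\in[A,B]\\ P\equiv n\ (q)}}\Phi(P)\log P=\frac1{\varphi(q)}\int_A^B\Phi+O\bigl((M+V)B^{1/2}\log^2 B\bigr).
\]
The main terms then average $\psi_{D(n)}(m)\omega(n)$ over the reduced classes. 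This is exactly the definition of $\wtilde\psi^*_t(m)$, and summing $\wtilde\psi_t^*(m)/m$ over $m\le Y$ gives $L(1,\wtilde\psi_t^*)$ up to a tail. I will bound that tail using $\wtilde\psi_t^*(m)\ll m^{-1/2+\epsi}$-type decay, or at least its multiplicativity together with an explicit computation at prime powers showing $\wtilde\psi^*_t(p)=O(1/p)$ for $p\nmid t$. The error from the progressions is $\sum_{m\le Y}\frac1m\varphi(q)(M+V)B^{1/2}$, which is roughly $(M+V)Y^2B^{1/2+\epsi}$, times $\ell$ in the twisted cases.

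The $P=\ell$ term, and any $P$ dividing $m$, contribute negligibly. In the Maass rows, the $\Sigma^*$ condition removes the $O(B^{1/2})$ values of $P$ for which $D(P)$ is a square; the remaining $P$ give an error that is easily absorbed. In the holomorphic case the assumption $t^2/4<A$ guarantees that $D(P)<0$ throughout the range.

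Finally, balance the two errors, $MBY^{-1/2}$ against $(M+V)RY^2B^{1/2}$ with $R$ recording the extra $\ell$. Taking $Y^{5/2}\asymp B^{1/2}M/((M+V)\ell^{\delta})$ gives $Y\asymp (B^{1/2}M/(M+V))^{2/5}$, and the total error becomes $M^{4/5}(M+V)^{1/5}B^{9/10+\epsi}$, with the factor $\ell^{1/5}$ in the twisted cases. I expect the main obstacle to be the tail of the $L(1,\psi_{D(P)})$ sum for non-fundamental $D(P)$ uniformly in $P$. This is precisely the point flagged in the remarks, and GRH is used there. A secondary nuisance is verifying absolute convergence of $L(1,\wtilde\psi^*_t)$ in the twisted and $\ell$-modified cases by a local computation at $p=\ell$ and $p=2$.
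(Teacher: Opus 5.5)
Your outline matches the paper's proof. The paper takes the untwisted case directly from \cite[Lemma 4.5]{BBLLDMurms}, and that proof is your scheme: a GRH tail bound for the $L(1,\psi_{D(P)})$-series, periodicity of $\psi_{D(n)}(m)$ in $n$, GRH primes in progressions, the tail of $L(1,\wtilde\psi_t^*)$, and optimization in $Y$. In the twisted cases the paper uses modulus $\ell m^2$, which is where your factor $\ell^{1/5}$ comes from.

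A few corrections are needed.

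First, the pointwise bound $\wtilde\psi_t^*(m)\ll m^{-1/2+\epsi}$ is false. For odd $p$, almost every $D=t^2-4n$ has $p\nmid D$, and then $\psi_D(p^2)=1$, so $\wtilde\psi_t(p^2)=1+O(1/p)$. The tail $\sum_{m>x}\fabs{\wtilde\psi_t(m)}/m\ll x^{-1/2}$ comes from square-full $m$. Your second option is the one that works: multiplicativity, $\wtilde\psi_t(p)=O(1/p)$ uniformly in $t$, the trivial bound $\fabs{\wtilde\psi_t(p^k)}\leq1$, and Rankin's trick.

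Second, $\wtilde\psi_{t,\chi_\ell}$ is not multiplicative: it vanishes whenever $\ell\nmid m$, including $m=1$. The step the paper actually supplies is the factorization $\wtilde\psi_{t,\chi_\ell}(\ell^e r)=\wtilde\psi_{t,\chi_\ell}(\ell^e)\wtilde\psi_t(r)$ for $\ell\nmid r$, obtained from \cite[Lemmas 4.1, 4.2]{BBLLDMurms}. Together with $\fabs{\wtilde\psi_{t,\chi_\ell}(\ell^e)}\leq1$, this reduces the twisted tail bound to the untwisted one. This is the ``local computation at $p=\ell$'' you anticipated, and it is the only genuinely new ingredient in the paper's sketch.

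Third, for the tail of $\sum_m\psi_{D(P)}(m)/m$ when $f_P>1$, you must use the factorization at the level of Dirichlet series, not just at $s=1$. Concretely, $\psi_D=\chi_d*c$ with $c$ supported on divisors of $f$ and $\sum_n\fabs{c(n)}n^{-1/2}\ll f^{\epsi}$. This is the GLH-strength estimate \cite[Lemma 4.4]{BBLLDMurms} that the paper relies on.
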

\begin{proof}
    The first case (``holomorphic'') is exactly \cite[Lemma 4.5]{BBLLDMurms}. The other cases are proved similarly. The condition that $t^2+4P$ is not a square in the even Maass cases is harmless. Indeed, if $t^2+4P$ is a square, then necessarily $P=\abs t +1$, so at most one prime is excluded by this condition. A similar remark applies to the odd Maass case. In fact we will only need the result in the case $\ell\mid t$ in which case $t^2+4\ell P$ is automatically not a square. We only sketch the ``holomorphic twisted'' case to explain how to handle the character $\chi_\ell$. Let $I = \{P\in (A, B]: \text{$P$ prime}\}$. For any $m\geq1$ we write \begin{align*}
        \sum_{P\in I}\chi_\ell(-P)\Phi(P)\psi_{t^2-4P}(m)\log P &= \sum_{\substack{n\mod \ell m^2\\\gcd(n, \ell m)=1}}\sum_{\substack{P\in I\\ P\equiv n\mod \ell m^2}}\chi_\ell(-P)\Phi(P)\psi_{t^2-4P}(m)\log P\\
        &\hspace{3em} + \sum_{\substack{P\in I\\ P\mid \ell m^2}}\chi_\ell(-P)\Phi(P)\psi_{t^2-4P}(m)\log P.
    \end{align*}
    The second sum is $O(M\log(\ell m))$. The first can be written as \begin{align*}
        \sum_{\substack{n\mod \ell m^2\\\gcd(n, \ell m)=1}}\chi_\ell(-n)\psi_{t^2-4n}(m)\sum_{\substack{P\in I\\ P\equiv n\mod \ell m^2}}\Phi(P)\log P
    \end{align*}
    by \cite[Lemma 4.1]{BBLLDMurms}.
    The inner sum is estimated using the GRH error term for primes in arithmetic progressions as in \cite[Lemma 4.5]{BBLLDMurms}, but with modulus $\ell m^2$, and we obtain \begin{align*}
        \sum_{P\in I}\chi_\ell(-P)\Phi(P)\psi_{t^2-4P}(m)\log P = \wtilde \psi_{t, \chi_\ell}(m)\int_A^B\Phi(u)\mathrm du + O(\varphi(\ell m^2)(M+V)B^{1/2}\log^2B).
    \end{align*}
    Using \cite[Lemma 4.1]{BBLLDMurms} and \cite[Lemma 4.2]{BBLLDMurms} one gets \begin{align*}
        \wtilde\psi_{t,\chi_\ell}(\ell^e r) = \wtilde\psi_{t,\chi_\ell}(\ell^e)\wtilde \psi_t(r)
    \end{align*}
    for $\ell\nmid r$. Thus, $\fabs{\wtilde\psi_{t,\chi_\ell}(\ell^e r)}\leq \fabs{\wtilde \psi_t(r)}$ and we also see $\wtilde\psi_{t,\chi_\ell}(r)=0$ if $\ell\nmid r$. From this we get the tail estimate \begin{align*}
        \sum_{m>x}\frac{\fabs{\wtilde\psi_{t,\chi_\ell}(m)}}m\leq\sum_{e=0}^\infty\frac1{\ell^e}\sum_{\substack{r>x/\ell^e\\ (r,\ell)=1}}\frac{\fabs{\wtilde\psi_t(r)}}r\ll x^{-1/2}.
    \end{align*} Here the last step comes from the tail estimate in \cite[Lemma 4.5]{BBLLDMurms}. The rest of the argument then goes through as in the proof of \cite[Lemma 4.5]{BBLLDMurms}.
\end{proof}

We can compare the various $\wtilde\psi^*$:
\begin{lemma}\label{lemma:L1_values}
    Assume $\ell\mid t$. Then \begin{align*}
        L(1, \wtilde\psi_{t,\chi_\ell}) &= \ell^{-1} L(1,\wtilde\psi_{t}),\\
        L(1, \wtilde\psi_t^{(\ell)+})&=L(1,\wtilde\psi_t)(1-\ell^{-2}),\\
        L(1, \wtilde\psi_t^{+})&=L(1,\wtilde\psi_t),\\
        L(1, \wtilde\psi_{t,\chi_\ell}^{+})&=\ell^{-1}L(1,\wtilde\psi_t).
    \end{align*}
\end{lemma}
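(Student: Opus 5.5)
The plan is to compare the four averaged series prime by prime. I would use the multiplicativity of the averages $m\mapsto\wtilde\psi^*(m)$, which follows from \cite[Lemma 4.1, Lemma 4.2]{BBLLDMurms} together with the Chinese remainder theorem, as in the proof of \cref{lemma:prime_Phi_average}. For $\ell\nmid r$ it gives
\[
\wtilde\psi^*(\ell^e r)=\wtilde\psi^*(\ell^e)\,\wtilde\psi^*(r),
\]
where for the twisted averages the prime-to-$\ell$ factor is the untwisted one, as already noted in the proof of \cref{lemma:prime_Phi_average}. Since the tails $\sum_{m>x}\fabs{\wtilde\psi^*(m)}/m$ are $\ll x^{-1/2}$ (the tail estimates used in \cref{lemma:prime_Phi_average}), all series converge absolutely. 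We may therefore write
\[
L(1,\wtilde\psi^*)=\Bigl(\sum_{e\ge0}\wtilde\psi^*(\ell^e)\ell^{-e}\Bigr)\Bigl(\sum_{(r,\ell)=1}\wtilde\psi^{*,(\ell')}(r)/r\Bigr),
\]
and the statement reduces to two tasks: checking that the prime-to-$\ell$ parts agree with that of $\wtilde\psi_t$, and computing the $\ell$-parts.

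For the prime-to-$\ell$ part with $(m,\ell)=1$, I would substitute in the defining averages. In $\wtilde\psi_t^{+}(m)$, replace $n\mapsto -n$, which permutes the units mod $m^2$, so that $t^2+4n$ becomes $t^2-4n$. In $\wtilde\psi_t^{(\ell)+}(m)$, replace $n\mapsto -\ell^{-1}n$ (with $\ell$ invertible mod $m^2$). Both averages then become exactly $\wtilde\psi_t(m)$. For the twisted averages I would use CRT to split $n\bmod \ell m^2$ into $n\bmod\ell$ and $n\bmod m^2$. The factor $\psi_{t^2\mp4n}(m)$ depends only on $n$ mod $m^2$ (by the periodicity in \cite[Lemma 4.1]{BBLLDMurms}), while $\chi_\ell(\mp n)$ depends only on $n$ mod $\ell$. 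This factors the average and gives the $\wtilde\psi_t(m)$ factor; it also shows that the twisted average vanishes at $\ell$-exponent $0$, because $\sum_{n\bmod\ell}\chi_\ell(n)=0$.

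For the $\ell$-parts I would use $\ell\mid t$ throughout. In the holomorphic cases, $t^2-4n\equiv-4n\not\equiv0\pmod\ell$, so $\ell$ does not divide the conductor of the discriminant and $\psi_{t^2-4n}(\ell^e)=\bigl(\frac{t^2-4n}{\ell}\bigr)^e=\chi_\ell(-n)^e$. Averaging over units, $\wtilde\psi_t(\ell^e)$ is $1$ for $e$ even and $0$ for $e$ odd, while $\wtilde\psi_{t,\chi_\ell}(\ell^e)$, which averages $\chi_\ell(-n)^{e+1}$, is $1$ for $e$ odd and $0$ for $e$ even. The local factors are therefore $(1-\ell^{-2})^{-1}$ and $\ell^{-1}(1-\ell^{-2})^{-1}$, which gives the first identity. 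The Maass even cases work in the same way with $\chi_\ell(n)$ in place of $\chi_\ell(-n)$, giving the third and fourth identities.

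In the Maass odd case, write $t=\ell s$. Then $t^2+4\ell n=\ell(\ell s^2+4n)$ with $\ell\nmid \ell s^2+4n$, so $v_\ell=1$ and $\ell$ divides the fundamental discriminant. Hence $\psi(\ell^e)=0$ for $e\ge1$, the local factor is $1$, and we obtain $L(1,\wtilde\psi_t)(1-\ell^{-2})$.

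The main obstacle is bookkeeping rather than a substantive difficulty. One has to justify the multiplicativity of the averages including the $\ell$-component, and to confirm that $\psi_D(\ell^e)$ equals the Kronecker symbol power in each case, including possible $2$-adic subtleties of $d$ versus $D$. I would check both against the definitions in \cite[Lemma 4.1, 4.2]{BBLLDMurms}, where $\ell$ odd makes the $\ell$-adic evaluation clean.
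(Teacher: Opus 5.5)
Your proof is correct, but it is organized differently from the paper's.

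\textbf{The paper's route.} The paper applies the substitution $n\mapsto -n$ globally, for every $m$ including powers of $\ell$. This gives $\wtilde\psi_t^{+}=\wtilde\psi_t$, $\wtilde\psi_{t,\chi_\ell}^{+}=\wtilde\psi_{t,\chi_\ell}$ and $\wtilde\psi_t^{(\ell)+}=\wtilde\psi_t^{(\ell)}$ identically. The paper then cites \cite[Lemma 3.2]{BKLMYMurms} for the second identity, so only the first identity needs an argument. For that one it computes no Euler factor. It combines $\psi_{t^2-4n}(\ell^e)=\chi_\ell(-n)\psi_{t^2-4n}(\ell^{e-1})$ with periodicity to get the shift relation $\wtilde\psi_{t,\chi_\ell}(\ell^e r)=\wtilde\psi_t(\ell^{e-1}r)$ for $e\ge1$. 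Together with vanishing for $\ell\nmid m$, a reindexing of the series then finishes the proof.

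\textbf{Your route.} You split off the $\ell$-Euler factor in all four cases and evaluate it explicitly:
\begin{itemize}
\item $(1-\ell^{-2})^{-1}$ for $\wtilde\psi_t$ and $\wtilde\psi_t^{+}$;
\item $\ell^{-1}(1-\ell^{-2})^{-1}$ for the two twisted averages;
\item $1$ for $\wtilde\psi_t^{(\ell)+}$, because $v_\ell(t^2+4\ell n)=1$ puts $\ell$ into the fundamental discriminant.
\end{itemize}

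\textbf{Comparison.} The paper's argument is shorter and never needs the values $\wtilde\psi_t(\ell^e)$. Yours needs multiplicativity for every average and a separate check of the prime-to-$\ell$ parts. In exchange it is self-contained: in particular it proves the second identity instead of importing it, and it shows where the constants $\ell^{-1}$ and $1-\ell^{-2}$ come from. Given the global $n\mapsto -n$ substitution, your separate $\ell$-adic computation in the two Maass even cases is redundant, but harmless.
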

\begin{proof}
    Making the change of variables $n\mapsto -n$ in the definition of the $\wtilde\psi^+$ shows \begin{align*}
        \wtilde\psi_t^{(\ell)+} &= \wtilde\psi_t^{(\ell)},\\
        \wtilde\psi_t^{+} &= \wtilde\psi_t,\\
        \wtilde\psi_{t,\chi_\ell}^+ &= \wtilde\psi_{t,\chi_\ell}.
    \end{align*}
    Here $\wtilde\psi_t^{(\ell)}$ is from \cite{BKLMYMurms} and in particular \cite[Lemma 3.2]{BKLMYMurms} shows that $L(1,\wtilde\psi_t^{(\ell)}) = L(1,\wtilde\psi_t)(1-\ell^{-2})$. Thus, we only have to show the first identity $L(1, \wtilde\psi_{t,\chi_\ell}) = \ell^{-1} L(1,\wtilde\psi_{t})$. We have already noted in the proof of \cref{lemma:prime_Phi_average} that $\wtilde\psi_{t,\chi_\ell}(m)=0$ if $\ell\nmid m$. We now claim that $\wtilde\psi_{t,\chi_\ell}(\ell^er)=\wtilde\psi_{t}(\ell^{e-1}r)$ for $e\geq1$. Indeed, if $(n,\ell r)=1$, then $t^2-4n\equiv -4n\ne0\mod\ell$, so $\psi_{t^2-4n}(\ell^e) = \legendrebig{t^2-4n}{\ell^e} = \chi_\ell(-n)\legendrebig{t^2-4n}{\ell^{e-1}}=\chi_\ell(-n)\psi_{t^2-4n}(\ell^{e-1})$. Then by \cite[Lemma 4.1]{BBLLDMurms}, \begin{align*}
        \wtilde\psi_{t,\chi_\ell}(\ell^er)&=\frac{1}{\varphi(\ell^{2e+1}r^2)}\sum_{\substack{n\mod \ell^{2e+1}r^2\\(n,\ell r)=1}}\psi_{t^2-4n}(\ell^{e-1}r)\\
        &=\frac{1}{\varphi(\ell^{2e+1}r^2)}\frac{\varphi(\ell^{2e+1}r^2)}{\varphi(\ell^{2e-2}r^2)}\sum_{\substack{n\mod \ell^{2e-2}r^2\\(n,\ell^{e-1} r)=1}}\psi_{t^2-4n}(\ell^{e-1}r)\\
        &=\wtilde\psi_t(\ell^{e-1}r).
    \end{align*}
    Then we see that \begin{align*}
        L(1,\wtilde\psi_{t,\chi_\ell}) &= \sum_{e = 1}^\infty \frac{1}{\ell^e}\sum_{\substack{r=1\\ (r,\ell)=1}}^\infty \frac{\wtilde \psi_t(\ell^{e-1}r)}{r}\\
        &=\ell^{-1}L(1,\wtilde\psi_t).
    \end{align*}
\end{proof}

We apply \cref{lemma:prime_Phi_average} to our situation. For the Maass form error estimates we introduce \begin{align*}
     w_{\ell, a}(t) &= \left(1+\frac{t^2}{\ell^{2a}}\right)^{-A},
\end{align*}
where $A$ is as in \cref{section:maass_props}.

\begin{corollary}\label{corollary:P_sum_asymptotic}
   Let $E\subset \mathbb R_{>0}$ be compact and $\ell\mid t$. Then, assuming GRH for Dirichlet $L$-functions:
    \begin{align*}
        &\sideset{}{^{*}}\sum_{\substack{P\in \ell^{n}E\\\text{$P$ prime}}}\omega(P)\Phi_{\ell, t}(P)L(1,\psi_{D(P)})\log P\\
        &=L(1, \wtilde\psi_t)\ell^{n}C\int_{E}^*\Phi_{\ell, t}(\ell^nv)\mathrm dv + O_{\epsi, k, f, E}(R\ell^{14 a/5+a\epsi}),
    \end{align*}
    where $n=2a+1$ in the ``Maass odd'' case and $n=2a$ otherwise. The sum excludes primes for which $t^2+4P$ is a square in the ``Maass even'' cases, primes $P$ with $P\leq t^2/4$ in the ``holomorphic'' cases, and the integral is over $E\cap (t^2/(4\ell^{2a}),\infty)$ in the ``holomorphic'' cases. Moreover,
    \[
        \renewcommand{\arraystretch}{1.25}
        \begin{array}{l|l|l|l|l|l}
        \text{Case} &  \Phi_{\ell, t}(u) & D(P) & \omega(P) &C & R \\
        \hline
        \text{holomorphic} &  \sqrt{4u-t^2}U_{k-2}(t/(2\sqrt u)) & t^2-4P & 1& 1 & 1 \\
        \text{holomorphic twisted} &  \sqrt{4u-t^2}U_{k-2}(t/(2\sqrt{u})) &  t^2-4P &\chi_\ell(-P)&\ell^{-1} & \ell^{1/5} \\
        \text{Maass odd} &  Q_f(t^2/(\ell u))\sqrt{u/\ell} & t^2+4\ell P& 1 & 1-\ell^{-2} &\ell^{9/10}w_{\ell,a+1}(t) \\
        \text{Maass even} &  Q_f(t^2/u)\sqrt u & t^2+4P & 1 & 1 & w_{\ell,a}(t) \\
        \text{Maass even twisted} & Q_f(t^2/u)\sqrt u & t^2+4P & \chi_\ell(P) & \ell^{-1} &\ell^{1/5}w_{\ell,a}(t).
        \end{array}
    \]
\end{corollary}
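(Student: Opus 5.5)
The plan is to apply \cref{lemma:prime_Phi_average} with $[A,B]=\ell^n E$, or $[A,B]=\ell^nE\cap(t^2/4,\infty)$ in the ``holomorphic'' cases, and with $\Phi=\Phi_{\ell,t}$. The main term then comes out directly. By \cref{lemma:L1_values} (using $\ell\mid t$) we have $L(1,\wtilde\psi_t^*)=C\,L(1,\wtilde\psi_t)$, where $C$ is as in the table. Substituting $u=\ell^n v$ gives
\[
\int_{A}^{B}\Phi_{\ell,t}(u)\,\mathrm du=\ell^n\int_E^*\Phi_{\ell,t}(\ell^nv)\,\mathrm dv .
\]
What remains is to bound $M=\norm{\Phi_{\ell,t}}_\infty$ and $V=\norm{\Phi_{\ell,t}'}_1$ on the interval, and to insert these into the error term
\[
R_{\mathrm{lemma}}\,M^{4/5}(M+V)^{1/5}B^{9/10+\epsi},\qquad B\asymp_E \ell^n .
\]

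\emph{Holomorphic cases.} Here $u\asymp\ell^{2a}$ and $\abs{U_{k-2}}\le k-1$ on $[-1,1]$, so $M\ll_{k,E}\ell^a$. The function $\Phi_{\ell,t}(u)=\sqrt{4u-t^2}\,U_{k-2}(t/(2\sqrt u))$ is a polynomial in $\sqrt u$, $t$ and $1/\sqrt u$ times a square root, so it is piecewise monotone with $O_k(1)$ pieces. Hence $V\ll_k M\ll\ell^a$. To be safe, I would expand $\sqrt{4u-t^2}\,U_{k-2}(t/2\sqrt u)$ as $\sin((k-1)\theta)/\sin\theta\cdot 2\sqrt u\sin\theta$; its derivative has bounded variation count. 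The error is then
\[
\ll R\,\ell^{a}\,\ell^{2a(9/10+\epsi)}=R\,\ell^{14a/5+2a\epsi},
\]
and renaming $\epsi$ gives the claim.

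\emph{Maass even cases.} Here $\Phi_{\ell,t}(u)=Q_f(t^2/u)\sqrt u$ with $u\asymp\ell^{2a}$. By \cref{proposition:Qf_bound},
\[
M\ll\ell^a\,(1+t^2/\ell^{2a})^{-A}\asymp \ell^a w_{\ell,a}(t).
\]
For the derivative,
\[
\Phi'(u)=\tfrac12 u^{-1/2}Q_f(t^2/u)-\frac{t^2}{u^{3/2}}\,Q_f'(t^2/u).
\]
Using $xQ_f'(x)\ll(1+x)^{-A}$ with $x=t^2/u$, both terms are $\ll u^{-1/2}w_{\ell,a}(t)$. Integrating over an interval of length $\asymp\ell^{2a}$ gives $V\ll\ell^a w_{\ell,a}(t)$. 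The error is therefore $\ll R_{\mathrm{lemma}}\,w_{\ell,a}(t)\,\ell^{a}\ell^{2a(9/10+\epsi)}$, as claimed.

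\emph{Maass odd case.} Here $u\asymp\ell^{2a+1}$ and $\Phi(u)=Q_f(t^2/(\ell u))\sqrt{u/\ell}$. We have $t^2/(\ell u)\asymp t^2/\ell^{2a+2}$, so $M,V\ll\ell^{a}w_{\ell,a+1}(t)$, and $B^{9/10+\epsi}\ll\ell^{(2a+1)(9/10+\epsi)}$. This produces the extra factor $\ell^{9/10}$ recorded in $R$.

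The main obstacle is the derivative bound for the holomorphic weight function near the endpoint $u=t^2/4$, where $\sqrt{4u-t^2}$ is not $C^1$. I would handle it by applying the lemma on $[t^2/4+\delta,B]$ and letting $\delta\to0$. The total variation stays $O_k(M)$, and the contribution of the missing piece is trivially small, because $\Phi$ vanishes at $t^2/4$ and there are only $O(\delta)$ primes in the removed interval.
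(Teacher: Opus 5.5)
Your proposal is correct and follows the paper's proof essentially step for step. You apply \cref{lemma:prime_Phi_average} on $\ell^nE$, cut off at $t^2/4$ in the holomorphic cases with the same $\delta\to0$ approximation at the non-$C^1$ endpoint; you read off $C$ from \cref{lemma:L1_values}; and you bound $M,V$ by $\ell^a$, $\ell^a w_{\ell,a}(t)$, resp.\ $\ell^a w_{\ell,a+1}(t)$ via \cref{proposition:Qf_bound}. Your piecewise-monotonicity bound for $V$ in the holomorphic case makes explicit what the paper cites from \cite{BKLMYMurms}, and the endpoint step can be stated more precisely: for $\delta<1/4$ the interval $(t^2/4,t^2/4+\delta]$ contains no integers, so the prime sums coincide exactly.
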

\begin{proof}
    This follows from \cref{lemma:prime_Phi_average}. In all cases we use \cref{lemma:L1_values} to simplify the $L$ value to $L(1,\wtilde\psi_t)C$ and obtain the final integrals by substituting $v = \ell^{-n}u$.
    
    We sketch the arguments. In the ``holomorphic'' cases we take $\Phi(u) = \sqrt{4u-t^2}U_{k-2}\left(\frac{t}{2\sqrt{u}}\right)$. Then as in \cite[p.9]{BKLMYMurms} one sees that $M, V\ll_{E, k}\ell^a$ on $\ell^{2a}E$. We also remark that we have to apply \cref{lemma:prime_Phi_average} for $A = \frac {t^2}4$ as well in which case $\Phi$ is not $C^1$ at $A$. This causes no difficulty, since $\int_A^B\abs{\Phi'(u)}\mathrm du$ is still bounded in the same way, so a small approximation argument using $A_\delta = \frac{t^2}4+\delta$ and letting $\delta\to0$ suffices.

    In the ``Maass odd'' case we take $\Phi(u) = Q_f\left(\frac{t^2}{\ell u}\right)\sqrt{u/\ell}$ on $\ell^{2a+1}E$. Note that here the condition $t^2+4\ell P\ne\square$ is vacuous since $\ell\mid t$. It follows from \cref{proposition:Qf_bound} that \begin{align*}
        M&\ll_{f, E} \ell^{a}w_{\ell, a+1}(t),\\
        V&\ll_{f, E} \ell^{a}w_{\ell, a+1}(t).
    \end{align*} 
    In the ``Maass even'' cases let $\Phi(u) = Q_f\left(\frac{t^2}{u}\right)\sqrt {u}$ on $\ell^{2a}E$. It follows from \cref{proposition:Qf_bound} that \begin{align*}
        M&\ll_{f, E} \ell^{a}w_{\ell, {a}}(t),\\
        V&\ll_{f, E} \ell^{a}w_{\ell, a}(t).
    \end{align*} 
    The rest is algebraic simplification.
\end{proof}

Finally, we also record the trivial estimate:
\begin{lemma}\label{lemma:trivial_hyperbolic_estimate}
    Let $E$ be a compact interval in $\R_{>0}$. Then \[
        \sum_{\substack{P/\ell^{2a}\in E\\ \text{$P$ prime}}} (\log P)c_{\ell^a}(P+1)P^{1-k/2} = O_{E}(a\ell^{3a}\log \ell).
    \]
\end{lemma}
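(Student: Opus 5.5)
We estimate the sum directly from the definition of $c_{\ell^a}$, using only the size of $c_{\ell^a}$ and the density of primes in the relevant residue classes. The weight $P^{1-k/2}$ is at most $1$ for $k\geq2$, since $P\geq 2$. Hence
\[
\Bigl|\sum_{\substack{P/\ell^{2a}\in E\\ \text{$P$ prime}}} (\log P)c_{\ell^a}(P+1)P^{1-k/2}\Bigr| \leq \sum_{\substack{P/\ell^{2a}\in E\\ \text{$P$ prime}}} (\log P)\,\abs{c_{\ell^a}(P+1)},
\]
and this loses at most a constant. (For $k\geq4$ the weight is actually tiny, but the statement only claims the weaker bound uniformly in $k$.) The key point is that $c_{\ell^a}(P+1)$ vanishes unless $\ell^{a-1}\mid P+1$. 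When it does not vanish, $\abs{c_{\ell^a}(P+1)}\leq \ell^{a-1}(\ell-1)<\ell^a$.

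It therefore suffices to bound
\[
\ell^a\sum_{\substack{P\leq X,\ P\equiv -1 \bmod \ell^{a-1}}}\log P, \qquad X:=\ell^{2a}\sup E.
\]
We drop primality and bound $\log P\leq\log X\ll_E a\log\ell$. The number of integers $m\leq X$ with $m\equiv-1\bmod \ell^{a-1}$ is at most $X/\ell^{a-1}+1\ll_E \ell^{a+1}$. This gives a total of $\ll_E \ell^{a}\cdot\ell^{a+1}\cdot a\log\ell = a\,\ell^{2a+1}\log\ell$.

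Since $a\geq2$, we have $2a+1\leq 3a$, so this is $O_E(a\ell^{3a}\log\ell)$, as claimed. No GRH or Brun–Titchmarsh input is needed, and there is no genuine obstacle. The only points to check are that the trivial count $X/\ell^{a-1}+1$ is dominated by its main term (true because $X/\ell^{a-1}\asymp\ell^{a+1}\geq1$) and that the implied constant depends only on $E$. For the latter, $\log X = 2a\log\ell+O_E(1)\ll_E a\log\ell$ since $a\geq1$ and $\ell\geq3$.
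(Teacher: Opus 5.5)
Your argument is correct. The paper gives no proof and records the lemma only as a ``trivial estimate.'' The stated exponent $3a$ is what the crudest version gives: $\abs{c_{\ell^a}(P+1)}\le\ell^a$, $\log P\ll_E a\log\ell$, and at most $\ll_E\ell^{2a}$ primes $P\le\ell^{2a}\sup E$, with the support of $c_{\ell^a}$ ignored.

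You additionally use that $c_{\ell^a}(P+1)=0$ unless $\ell^{a-1}\mid P+1$. This saves a factor $\ell^{a-1}$ and gives the sharper bound $O_E(a\ell^{2a+1}\log\ell)$. For $k\geq4$, the weight $P^{1-k/2}\ll_E\ell^{-2a}$ would save considerably more.

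The refinement is valid but not needed downstream. In the proof of \cref{theorem:n_limit} this term is dropped anyway, because it is dominated by the $O(\ell^{19a/5+a\epsi})$ error.

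One small remark: the inequality $2a+1\le 3a$ needs only $a\geq1$, not $a\geq2$.
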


\section{Proofs of the Main Theorems}
\subsection{Proof of \cref{theorem:n_limit}}
The theorem now follows quickly from \cref{corollary:P_sum_asymptotic} and the trace formulas as in \cite{BKLMYMurms}. For example, writing $E = [\alpha,\beta]$, in the $\GL_2$-case we have \begin{align*}
    &(-1)^{k/2+1}\sum_{\substack{P/\ell^{2a}\in E\\ \text{$P$ prime}}}\log P\sum_{\substack{f\in H^\new_k(\ell^{2a})}}\epsi(f)\sqrt P\lambda_P(f) \\
    &=\sum_{\substack{P/\ell^{2a}\in E\\ \text{$P$ prime}}}\log P\Bigg(\frac{\ell-\chi_\ell(-P)}{2\pi\ell}\sum_{t\in\Z, t^2<4P}\sqrt{4P-t^2}U_{k-2}\left(\frac{t}{2\sqrt{P}}\right)c_{\ell^a}(t)L(1,\psi_{t^2-4P})\\ &\hspace{5em}+ \frac{\ell-1}\ell c_{\ell^a}(P+1)P^{1-k/2}\Bigg)\\
    &=\left(\sum_{\substack{t\in \Z\\ t^2 < 4\beta\ell^{2a}}}c_{\ell^a}(t)\sum_{\substack{P\in \ell^{2a}E\cap (t^2/4, \infty)\\ \text{$P$ prime}}}(\log P)\frac{\ell-\chi_\ell(-P)}{2\pi\ell}\sqrt{4P-t^2}U_{k-2}\left(\frac{t}{2\sqrt{P}}\right)L(1,\psi_{t^2-4P})\right)\\&\hspace{5em}+ O_{E}(a\ell^{3a}\log \ell)\\
    &=\frac1{2\pi}\sum_{\substack{t\in \Z\\ t^2 < 4\beta\ell^{2a}}}c_{\ell^a}(t)\Bigg(L(1, \wtilde\psi_t)\ell^{2a}(1-\ell^{-2})\int_{E\cap (t^2/(4\ell^{2a}),\infty)}\sqrt{4\ell^{2a}v-t^2}U_{k-2}\left(\frac t{2\ell^a\sqrt v}\right)\mathrm dv\\&\hspace{9em} + O_{\epsi, k, E}(\ell^{14 a/5+a\epsi})\Bigg) + O_{E}(a\ell^{3a}\log \ell).
\end{align*}
Here we used \cref{corollary:P_sum_asymptotic} and \cref{lemma:trivial_hyperbolic_estimate}. Note that in \cref{corollary:P_sum_asymptotic} we need $\ell\mid t$. Indeed, only terms with $\ell^{a-1}\mid t$ contribute, since otherwise $c_{\ell^a}(t)\ne0$. Hence, we can change variables $\ell^{a-1}r = t$ and obtain \begin{align*}
    &(-1)^{k/2+1}\sum_{\substack{P/\ell^{2a}\in E\\ \text{$P$ prime}}}\log P\sum_{\substack{f\in H^\new_k(\ell^{2a})}}\epsi(f)\sqrt P\lambda_P(f)\\
    &=\frac1{\pi}\sum_{\substack{r\in \Z\\ r^2 <4\beta\ell^2}}\left(c_{\ell^a}(\ell^{a-1}r)L(1, \wtilde\psi_{\ell^{a-1}r})\ell^{3a}(1-\ell^{-2})\int_{E\cap (r^2/(4\ell^{2}),\infty)}\sqrt{v-\frac{r^2}{4\ell^2}}U_{k-2}\left(\frac r{2\ell\sqrt v}\right)\mathrm dv\right)\\&\hspace{5em}+O_{\epsi, k, E}(\ell^{19 a/5+a\epsi}) + O_{E}(a\ell^{3a}\log \ell).
\end{align*}
We also used $\sum_{t^2<4\beta\ell^{2a}}\abs{c_{\ell^a}(t)}\ll_E\ell^a$. We can drop the second error term as it is dominated by the first. \cref{proposition:GL2_count} and the Prime Number Theorem with GRH error term give us \begin{align*}
    \sum_{\substack{P/\ell^{2a}\in E\\ \text{$P$ prime}}}\log P\sum_{\substack{f\in H^\new_k(\ell^{2a})}}1 = \frac{k-1}{12}\abs E\ell^{4a-3}(\ell-1)^2(\ell+1)(1+O_{E, k}(\ell^{-a}(a\log\ell)^2)),
\end{align*}
so \begin{align*}
    \frac1{\displaystyle\sum_{\substack{P/\ell^{2a}\in E\\ \text{$P$ prime}}}\log P\sum_{\substack{f\in H^\new_k(\ell^{2a})}}1} = \left(\frac{k-1}{12}\abs E\ell^{4a-3}(\ell-1)^2(\ell+1)\right)^{-1}(1+O_{E, k}(\ell^{-a}(a\log\ell)^2)),
\end{align*}
as $\ell^a\to\infty$. Using this together with \[L(1, \wtilde\psi_{\ell^{a-1}r}) = \frac{\pi^2}6\prod_{p\nmid \ell r}\left(\frac{p^2-p-1}{p(p-1)}\right)\] from \cite[Lemma 4.3]{BBLLDMurms} and simplifying gives the modular forms case of \cref{theorem:n_limit}. The quaternionic case is proved in exactly the same way.

Next we consider Maass forms. In the case of odd exponents $n=2a+1$ we use \[\sum_{t\in\Z}\fabs{c_{\ell^{a+1}}(t)}w_{\ell, {a+1}}(t)\ll\ell^{a+1}\] to obtain \begin{align*}
    &\sum_{\substack{P/\ell^{2a+1}\in E\\ \text{$P$ prime}}}\log P\sum_{\substack{\phi\in H^\new_\maass(\ell^{2a+1})}}h_f(t_\phi)\epsi(\phi)\sqrt P\lambda_P(\phi)\\
    &=\sum_{t\in\Z}c_{\ell^{a+1}}(t)\left(L(1,\wtilde\psi_{t})(1-\ell^{-2})\ell^{2a+1}\int_E Q_f\left(\frac{t^2}{\ell^{2a+2}v}\right)\sqrt{\ell^{2a}v}\mathrm dv  + O_{\epsi,f,E}(\ell^{14a/5+a\epsi+9/10}w_{\ell,a+1}(t))\right)\\
    &=\sum_{r\in\Z}(\one_{\ell\mid r}-\ell^{-1})L(1,\wtilde\psi_{\ell r})(1-\ell^{-2})\ell^{4a+2}\int_E Q_f\left(\frac{r^2}{\ell^2 v}\right)\sqrt v\mathrm dv + O_{\epsi, f, E}\left(\ell^{19a/5+a\epsi+19/10}\right).
\end{align*}
For even exponents $n = 2a$ we obtain \begin{align*}
    &\sum_{\substack{P/\ell^{2a}\in E\\ \text{$P$ prime}}}\log P\sum_{\substack{\phi\in H^\new_\maass(\ell^{2a})}}h_f(t_\phi)\epsi(\phi)\sqrt P\lambda_P(\phi)\\
    &=\sum_{r\in\Z}(\one_{\ell\mid r}-\ell^{-1})L(1,\wtilde\psi_{\ell r})(1-\ell^{-2})\ell^{4a}\int_E Q_f\left(\frac{r^2}{\ell^2 v}\right)\sqrt v\mathrm dv + O_{\epsi, f, E}\left(\ell^{19a/5+a\epsi}\right).
\end{align*}
Note the extra error term coming from \cref{proposition:maass_TF} in the even case is smaller than the error here, so we may drop it. \cref{proposition:maass_count} and the prime number theorem with GRH error term gives
\begin{align*}
    \sum_{\substack{P/\ell^{n}\in E\\ \text{$P$ prime}}}\log P \sum_{\phi\in H_\maass^\new(\ell^{n})}h_f(t_\phi) = \abs E\frac{\pi}{3}f(1)\ell^{2n-3}(\ell-1)^2(\ell+1)\left(1+O_{E, f}(\ell^{-n/2}(n\log\ell)^2)\right).
\end{align*}
Plugging this in and simplifying, we obtain the Maass case of \cref{theorem:n_limit}.

\subsection{Proof of \cref{theorem:ell_limit}}
From the proof of \cref{theorem:n_limit} we see the $\ell$-dependence of the error term:\begin{align*}\label{eqn:gl2_ell_explicit_error}
    \frac{\displaystyle\sum_{\substack{P/\ell^{2a}\in E\\ \text{$P$ prime}}}\log P\sum_{f\in H^\new_k(\ell^{2a})}\epsi(f)P^{1/2}\lambda_P(f)}{\displaystyle\sum_{\substack{P/\ell^{2a}\in E\\ \text{$P$ prime}}}\log P \sum_{f\in H^\new_k(\ell^{2a})}1}
        = M_{E,\ell,k} + O_{E, k, a, \epsi}(\ell^{-a/5+\epsi}) + O_{E, k}(M_{E, \ell, k}\ell^{-a}(a\log \ell)^2). \tag{$\dagger$}
\end{align*}
For the quaternion algebra case the only difference is that the first error term is $O_{E, k, a, \epsi}(\ell^{-a/5+\frac15+\epsi})$. We deduce \cref{theorem:ell_limit} from this by computing $\lim_{\ell\to\infty}\MM_{\ell, k}(v)$. Recall that \begin{align*}
    \MM_{\ell, k}(v) = (-1)^{\frac k2+1}\frac{2\pi}{(k-1)(1-\ell^{-1})}\sum_{\substack{r\in\Z\\ \abs r < 2\ell\sqrt v}}(\one_{\ell\mid r}-\ell^{-1})A_{\ell r}\sqrt{v-\frac {r^2}{4\ell^2}}U_{k-2}\left(\frac r{2\ell\sqrt v}\right),
\end{align*}
with $A_{t}=\prod_{\substack{p\nmid t}}\left(\frac{p^2-p-1}{p(p-1)}\right)$. Abbreviate $c_p = \frac{p^2-p-1}{p(p-1)}$. Let $f_v(x) = \sqrt{v-\frac{x^2}4}U_{k-2}\left(\frac x{2\sqrt v}\right)$. Note that $f_v(x) = O_k(\sqrt v)$. We may split up the sum as \begin{align*}
   \sum_{\substack{r\in\Z\\ \abs r < 2\ell\sqrt v}}(\one_{\ell\mid r}-\ell^{-1})A_{\ell r}f_v(r/\ell) = (1-\ell^{-1})\sum_{\substack{s\in\Z\\ \abs s < 2\sqrt v}}A_{\ell s}f_v(s)-\ell^{-1}\sum_{\substack{r\in\Z\\ \abs r < 2\ell\sqrt v, \ell\nmid r}}A_{\ell r}f_v(r/\ell).
\end{align*}
For $s\in\Z-\{0\}$ we have $A_{\ell s} = A_s(1+(c_\ell^{-1}-1)\one_{\ell\nmid s})$, hence the first sum is \begin{align*}
    \sum_{\substack{s\in\Z\\ \abs s < 2\sqrt v}}A_{\ell s}f_v(s) = \sum_{\substack{s\in\Z\\ \abs s < 2\sqrt v}}A_{s}f_v(s) + O_k(\ell^{-2}v).
\end{align*}
Next the second sum. For $\ell\nmid r$ we have $A_{\ell r} = c_\ell^{-1}A_r$, hence we can write \begin{align*}
    \ell^{-1}\sum_{\substack{r\in\Z\\ \abs r < 2\ell\sqrt v, \ell\nmid r}}A_{\ell r}f_v(r/\ell) &= \ell^{-1}c_\ell^{-1}\Bigg(\sum_{\substack{r\in\Z\\ \abs r < 2\ell\sqrt v}}A_{r}f_v(r/\ell)-\sum_{\substack{r\in\Z\\ \abs r < 2\ell\sqrt v, \ell\mid r}}A_{r}f_v(r/\ell)\Bigg)\\
    &=\ell^{-1}\sum_{\substack{r\in\Z\\ \abs r < 2\ell\sqrt v}}A_{r}f_v(r/\ell) + O_k(\ell^{-1}v).
\end{align*}
Let $q(d) = \mu^2(d)\prod_{p\mid d}(c_p^{-1}-1) = \mu^2(d)\prod_{p\mid d}\frac1{p^2-p-1}$. Then $A_r = A_1\sum_{d\mid r}q(d)$. If we write $g_v(x) = f_v(x)\one_{[-2\sqrt v, 2\sqrt v]}(x)$, then \begin{align*}
    \frac1\ell\sum_{\substack{r\in\Z\\ \abs r < 2\ell\sqrt v}}A_{r}f_v(r/\ell) &= A_1\frac1\ell\sum_{r\in\Z}\Big(\sum_{d\mid r}q(d)\Big)g_v(r/\ell)\\
    &=A_1\sum_{d=1}^\infty q(d)\frac1\ell\sum_{m\in\Z}g_v(dm/\ell).
\end{align*}
Note that the inner sum is a Riemann sum and we have \begin{align*}
    \frac1\ell\sum_{m\in\Z}g_v(dm/\ell) = \frac1d\int_\R g_v(x)\mathrm dx+O\Big(\frac1\ell\int_\R\abs{g_v'(u)}\mathrm du\Big).
\end{align*}
One easily computes that $\int_\R\abs{g_v'(u)}\mathrm du = O_k(\sqrt v)$ and \begin{align*}
    \int_\R g_v(x) \mathrm dx= \int_{-2\sqrt v}^{2\sqrt v}\sqrt{v-\frac{x^2}4}U_{k-2}\left(\frac x{2\sqrt v}\right)\mathrm dx=2v\int_{-1}^1\sqrt{1-t^2}U_{k-2}(t)\mathrm dt = \pi v\one_{k=2}.
\end{align*}
The last equality follows from the orthogonality of the Chebyshev polynomials with respect to the measure $\sqrt{1-t^2}\mathrm dt$. Hence \begin{align*}
     \frac1\ell\sum_{\substack{r\in\Z\\ \abs r < 2\ell\sqrt v}}A_{r}f_v(r/\ell)&=A_1\sum_{d=1}^\infty q(d)\Big(\pi d^{-1}v\one_{k=2} + O_k(\ell^{-1}\sqrt v)\Big)\\
     &=\one_{k=2}\pi vA_1\sum_{d=1}^\infty \frac{q(d)}{d} + O_k(\ell^{-1}\sqrt v).
\end{align*}
Note that \[
    A_1\sum_{d=1}^\infty \frac{q(d)}d = \prod_p\frac{p^2-p-1}{p(p-1)}\prod_p\left(1+\frac{1}{p(p^2-p-1)}\right)=\frac1{\zeta(2)}=\frac6{\pi^2}.
\]
So putting everything together we get \begin{align*}
    \MM_{\ell, k}(v)
    &=(-1)^{\frac k2+1}\frac{2\pi}{(k-1)}\Bigg(\sum_{\substack{s\in\Z\\ \abs s < 2\sqrt v}}A_{s}\sqrt{v-\frac{s^2}4}U_{k-2}\left(\frac s{2\sqrt v}\right) - \one_{k=2}\frac6\pi v + O_k(\ell^{-1}(\sqrt v + v))\Bigg).
\end{align*}
Integrating gives \begin{align*}
    M_{E, \ell, k} = M_{E, \infty, k} + O_{E, k}(\ell^{-1}).
\end{align*}
Certainly $M_{E, \ell, k}\ell^{-a}(a\log \ell)^2\ll_{E,k, a} \ell^{-1}$. Therefore, by \ref{eqn:gl2_ell_explicit_error}\begin{align*}
    \frac{\displaystyle\sum_{\substack{P/\ell^{2a}\in E\\ \text{$P$ prime}}}\log P\sum_{f\in H^\new_k(\ell^{2a})}\epsi(f)P^{1/2}\lambda_P(f)}{\displaystyle\sum_{\substack{P/\ell^{2a}\in E\\ \text{$P$ prime}}}\log P \sum_{f\in H^\new_k(\ell^{2a})}1}
        = M_{E, \infty, k} + O_{E, k, a, \epsi}(\ell^{-a/5+\epsi}+\ell^{-1}).
\end{align*}
The quaternion algebra case follows immediately. The only difference is that in \ref{eqn:gl2_ell_explicit_error} the first error is now $O_{E, k, a,\epsi}(\ell^{-a/5+\frac15+\epsi})$, resulting in the slightly different error term in \cref{theorem:ell_limit} (ii). The result for Maass forms is proved in the same way. We only note that the analogue of \ref{eqn:gl2_ell_explicit_error} is now \begin{align*}
    &\frac{\displaystyle\sum_{\substack{P/\ell^{n}\in E\\ \text{$P$ prime}}}\log P\sum_{\phi\in H_\maass^\new(\ell^{n})}h_f(t_\phi)\epsi(\phi)P^{1/2}\lambda_P(\phi)}{\displaystyle\sum_{\substack{P/\ell^{n}\in E\\ \text{$P$ prime}}}\log P \sum_{\phi\in H_\maass^\new(\ell^{n})}h_f(t_\phi)} \\
    &= M_{E,\ell, f} + O_{E, f, n,\epsi}(\ell^{-n/10 + \epsi}) + O_{E, f}(M_{E, \ell, f}\ell^{-n/2}(n\log\ell)^2).
\end{align*}
In the proof the function $f_v(x)$ is now $\sqrt v Q_f\left(\frac{x^2}{v}\right)$ and $g_v = f_v$ as there is no compact support cutoff in the Maass form case.

\section{Proofs of the Explicit Trace Formulas}\label{section:tf_proofs}
First note that for both \cref{proposition:GL2_TF} and \cref{proposition:quat_TF} the second equality follows from \[
    H(D) = \frac{\sqrt{\abs D}}{\pi}L(1,\psi_D),
\]
for any negative discriminant $D$ by \cite[(2.4)]{BKLMYMurms}.
\subsection{Proof of \cref{proposition:GL2_TF}}
We proceed similarly to \cite{BKLMYMurms} and use an inclusion-exclusion argument to derive the expression for the trace on the new space using the explicit Eichler-Selberg trace formula.

First note that \begin{align*}
    \sum_{\substack{f\in H^\new_k(\ell^{2a})}}\epsi(f)\sqrt P\lambda_P(f) = (-1)^{k/2}P^{(2-k)/2}\tr\big(T_P\circ W_{\ell^{2a}}\mid S_k^\new(\Gamma_0(\ell^{2a}))\big).
\end{align*}
Indeed, the Atkin-Lehner involution $W_{\ell^{2a}}$ gives us the local root number at the $\ell$-adic place and $(-1)^{k/2}$ is the local root number at infinity. 
By inverting the formula \cite[§2 (5)]{SkoZagJFMF}\begin{align*}
    \tr(T_P\circ W_{N}\mid S_k(N)) = \sum_{\substack{M| N\\ N/M=\square}}\tr(T_P\circ W_M\mid S_k^\new(M)),
\end{align*}
we obtain
\begin{align*}
    \tr(T_P\circ W_{\ell^{2a}}\mid S_k^\new(\ell^{2a})) = \tr(T_P\circ W_{\ell^{2a}}\mid S_k(\ell^{2a}))-\tr(T_P\circ W_{\ell^{2a-2}}\mid S_k(\ell^{2a-2})).
\end{align*}
For the trace on the old spaces we have the formula \cite[§2 (7)]{SkoZagJFMF}\begin{align*}
    \tr(T_P\circ W_{\ell^{2a}}\mid S_k(\ell^{2a})) = &-\frac12\sum_{r \in \{2a, 2a-2\}}\mu(\ell^{a-r/2})\sum_{\substack{s^2<4\ell^rP\\\ell^{a+r/2}\mid s}}P^{(k-2)/2}U_{k-2}\left(\frac s{2\sqrt{\ell^rP}}\right)H(s^2-4P\ell^r)\\
    &-\varphi(\ell^a)\one_{\ell^a\mid (P+1)} + \one_{k=2}(P+1).
\end{align*}
Therefore \begin{align*}
    \tr&(T_P\circ W_{\ell^{2a}}\mid S_k^\new(\ell^{2a})) \\
    = &-\frac12\sum_{r \in \{2a, 2a-2\}}\mu(\ell^{a-r/2})\sum_{\substack{s^2<4\ell^rP\\\ell^{a+r/2}\mid s}}P^{(k-2)/2}U_{k-2}\left(\frac s{2\sqrt{\ell^rP}}\right)H(s^2-4P\ell^r)\\
    &+\frac12\sum_{r \in \{2a-2, 2a-4\}}\mu(\ell^{a-1-r/2})\sum_{\substack{s^2<4\ell^rP\\\ell^{a-1+r/2}\mid s}}P^{(k-2)/2}U_{k-2}\left(\frac s{2\sqrt{\ell^rP}}\right)H(s^2-4P\ell^r)\\
    &-\varphi(\ell^a)\one_{\ell^a\mid (P+1)} + \one_{k=2}(P+1)\\
    &+\varphi(\ell^{a-1})\one_{\ell^{a-1}\mid (P+1)} - \one_{k=2}(P+1).
\end{align*}
Note that the $k=2$ terms cancel and the hyperbolic terms add up to the desired $-\frac{\ell-1}{\ell}c_{\ell^a}(P+1)$. For the elliptic terms we change variables in the inner sum to $s = \ell^{r/2}t$ and abbreviate $D_t = t^2-4P$, so we get \begin{align*}
    \tr(T_P\circ W_{\ell^{2a}}\mid S_k^\new(\ell^{2a})) = \frac12P^{(k-2)/2}\sum_{t^2<4P}U_{k-2}\left(\frac t{2\sqrt{P}}\right)H_{t} - \frac{\ell-1}{\ell}c_{\ell^a}(P+1).
\end{align*}
where \[
    H_t := \one_{\ell^{a-1}\mid t}(H(\ell^{2a-2}D_t)-H(\ell^{2a-4}D_t))-\one_{\ell^a\mid t}( H(\ell^{2a}D_t)-H(\ell^{2a-2}D_t)).
\]
If $D$ is a discriminant with $\ell\nmid D$, it then follows from \cite[Theorem 7.24]{CoxPF} that for $j\geq1$\begin{align*}
    H(\ell^{2j}D)-H(\ell^{2j-2}D) = \ell^j\left(1-\ell^{-1}\legendre{D}{\ell}\right)H(D).
\end{align*}
Now note that when $\ell^{a-1}\mid t$ we have $\ell\nmid D_t$ and moreover $D_t\equiv -4P\mod \ell$, hence\begin{align*}
     H_t &= H(D_t)\left(\one_{\ell^{a-1}\mid t}\ell^{a-1}\left(1-\ell^{-1}\legendre{-P}{\ell}\right)-\one_{\ell^a\mid t}\ell^{a}\left(1-\ell^{-1}\legendre{-P}{\ell}\right)\right)\\
     &=-\frac{\ell-\chi_\ell(-P)}{\ell}c_{\ell^a}(t)H(D_t).
\end{align*}
Plugging this back into the formula for $\sum_{\substack{f\in H^\new_k(\ell^{2a})}}\epsi(f)\sqrt P\lambda_P(f) = (-1)^{k/2}P^{(2-k)/2}\tr(T_P\circ W_{\ell^{2a}}\mid S_k^\new(\Gamma_0(\ell^{2a})))$ we obtain \cref{proposition:GL2_TF}.

\begin{remark*}
    This explicit trace formula can be derived along the same lines as in the next section, using the adelic Arthur-Selberg trace formula for $\GL_2$ in \cite{KniLiTHO}. Then we would use the test function $r_n = e_n-e_{n-2}$ with $e_n = \vol(\overline{K_0(\ell^n)})^{-1}\one_{\left(\begin{smallmatrix}
    0&-1\\\ell^n&0
\end{smallmatrix}\right)\overline{K_0(\ell^n)}}\in C_c^\infty(\PGL_2(\Q_\ell))$ at the $\ell$-adic place, which has the property \begin{align*}
    \tr\pi(r_n) = \begin{cases}
        \epsi(\pi) &\text{if $n(\pi)= n$},\\
        0 &\text{if $n(\pi)\ne n$},
    \end{cases}
\end{align*}
for any infinite-dimensional irreducible admissible representation $\pi$ of $\PGL_2(\Q_\ell)$. We will use this approach for the Maass forms case.
\end{remark*}

\subsection{Normalization of Measures}
For the proofs of the remaining formulas we use the adelic trace formulas, which require integration over a number of different groups. Thus, we normalize the Haar measures here. The presentation is modeled on the normalization table in \cite{FinLapASTF}. In the table, $p$ denotes a prime number and $v$ any place of $\Q$.
\begin{center}
    \renewcommand{\arraystretch}{1.25}
    \begin{longtable}{l | p{20em}}
        Group & Measure normalization\\
        \midrule
        discrete & counting measure\\
        $\Q_p$, $\Q_p^\times$ & $\vol(\Z_p)=1$, $\vol(\Z_p^\times)=1$\\
        $\R$ &\text{Lebesgue measure}\\
        $\R^\times,\R_{>0}$ & $\frac{\mathrm d^{\mathrm{Leb}}x}{\abs x}$\\
        $\C^\times$ & $\frac{2\mathrm dx\mathrm dy}{x^2+y^2}$\\
        $L^\times$ with $L/\Q_p$ quadratic & $\vol(\O_L^\times)=1$\\
        $D^\times$ with $D/\Q_p$ quaternion algebra & $\vol(\O_D^\times)=1$\\
        $\GL_2(\Q_p)$ & $\vol(\GL_2(\Z_p))=1$\\
        $\PSO_2(\R)$ & $\vol(\PSO_2(\R))=1$\\
        $\PSU_2(\R)$ & $\vol(\PSU_2(\R))=1$\\
        $Z(\Q_v) = \left\{\begin{pmatrix}
            z&0\\0&z
        \end{pmatrix}\right\}\subset\GL_2(\Q_v)$ & \text{through $\Q_v^\times\ni x\mapsto \begin{pmatrix}
            x&0\\0&x
        \end{pmatrix}$}\\
        $N(\Q_v)=\left\{\begin{pmatrix}
            1&x\\0&1
        \end{pmatrix}\right\}\subset\GL_2(\Q_v)$ &through $\Q_v\ni x\mapsto \begin{pmatrix}
            1&x\\0&1
        \end{pmatrix}$\\
        $A(\Q_v) = \left\{\begin{pmatrix}
            *&0\\0&*
        \end{pmatrix}\right\}\subset\GL_2(\Q_v)$ & through $\Q_v^\times\times\Q_v^\times\ni(x, y)\mapsto \begin{pmatrix}
            x&0\\0&y
        \end{pmatrix}$\\
        $\PGL_2(\R)$ & compatible with Iwasawa decomposition\\
        adelic group & restricted direct product measure
    \end{longtable}
\end{center}
For any group not that is not listed here the measure is chosen to be compatible with quotients, e.g.\ for $\PGL_2(\Q_p)=\GL_2(\Q_p)/Z(\Q_p)$ the measure is chosen to be the quotient measure. Also $\A_\Q^{\times, 1}$ has measure so that the quotient measure on $\A^\times_\Q / \A_\Q^{\times, 1}\cong \R_{>0}$ (under $\abs\cdot$) is the one defined above.

\subsection{Proof of \cref{proposition:quat_TF}}
\subsubsection{Construction of the Local Test Function}\label{section:quat_test_function}
Let $p$ be an odd prime and let $F$ be a finite extension of $\Q_p$. Let $D$ be a division quaternion algebra over $F$ and $G = D^\times$, $\overline G = D^\times / F^\times$. Let $K_n$ be the image of $U_D^n$ in $\overline G$. Recall that for an irreducible admissible representation $\pi$ of $\overline G$ we let $n(\pi)\geq1$ denote its conductor exponent, which is the smallest $n\geq1$ for which $K_{n-1}$ acts trivially on $\pi$. For $n\geq3$, we need a test function $\varphi_n\in C_c^\infty(\overline G)$ such that \begin{align*}
    \tr\pi(\varphi_n) = \begin{cases}
        \epsi(\pi)&\text{if $n(\pi)=n$},\\
        0&\text{if $n(\pi)\ne n$}.
    \end{cases}
\end{align*}
Let $\psi$ be a nontrivial additive character on $F$ of level $1$, i.e.\ trivial on $\fp_F$, but not on $\O_F$. Let $\psi_D = \psi\circ\trd$ be the induced character on $D$.

For an integer $l\geq1$ let $D(l) = \{x\in D^\times : v_D(x) = -l\}$. Then by \cite[Theorem 55.3]{BusHenLL}\footnote{In \cite{BusHenLL} there is a missing $\psi_D(x)$ in the integral and on the right-hand side the $q^{2n+1}$ should be in the numerator rather than in the denominator.} for an irreducible representation $\pi$ of $\overline G$ of conductor $n = l+2$ we have \[
    \int_{D(n-2)}\pi(x)\psi_D(x)\mathrm dx = \vol(U_D^{n-1})q^{n-1}\epsi(\pi)\id_\pi.
\]
Note that here the root number $\epsi(\pi)$ is normalized so that it is the negative of the root number of the Jacquet--Langlands transfer of $\pi$. We use this to construct our test function. We need to make the function invariant under $F^\times$. Hence, we define $T =T_n: \overline G\to\C$ by \[
    T(gF^\times) = \begin{cases}
        0 & \text{if $n \not\equiv v_D(g)\mod 2$},\\
        \int_{\O_F^\times}\psi_D(a\varpi_F^{(-v_D(g)-n+2)/2}g)\mathrm da &\text{if $n\equiv v_D(g)\mod2$}.
    \end{cases}
\]
Here $\varpi_F$ denotes a uniformizer of $F$. Note that $v_D(g)$ is well-defined mod $2$ and $\varpi_F^{(-v_D(g)-n+2)/2}g$ is well-defined up to a unit which makes the integral well-defined. Then it is easy to see that for admissible irreducible representations $\pi$ of $\overline G$ we have $\pi(T) = \int_{D(l)}\pi(x)\psi_D(x)\mathrm dx$. We claim that if $n(\pi)\ne n$, then $\pi(T) = 0$. This is essentially \cite[2.3.5]{BusFroGS}. First note that $T$ is $K_{n-1}$-invariant, so $\pi(T)\subset\pi^{K_{n-1}}$. This shows that $\pi(T)=0$ if $n(\pi)>n$. If $n(\pi)<n$, it is also easy to see that $\pi(T)=0$ using $\int_{U_D^{n-2}/U_D^{n-1}}\psi_D(\varpi_D^{-n+2}g)\mathrm dg = 0$. Hence, we have \begin{align*}
    \tr\pi(T) = (\dim\pi)\vol(U_D^{n-1})q^{n-1}\begin{cases}
        \epsi(\pi)&\text{if $n(\pi)=n$},\\
        0&\text{if $n(\pi)\ne n$}.
    \end{cases}
\end{align*}
In fact, if $n=n(\pi)\ne2$, $n$ alone determines the dimension of $\pi$. By \cite[Remark 8]{AnaPatTPPQA} we have \[
    \dim \pi = d_n:=\begin{cases}
            2q^{(n-2)/2}&\text{$n$ even},\\
            (q+1)q^{(n-3)/2}&\text{$n$ odd}.
        \end{cases}
\]
This essentially follows from the explicit description of all the possible representations $\pi$ as inductions from tori. Therefore, $\varphi:=\varphi_n := d_n^{-1}\vol(U_D^{n-1})^{-1}q^{-n+1}T_n$ is the desired test function.

The volume of $U_D^{n-1}$ is easily computed and we obtain $\varphi_n = d_n^{-1}(q^2-1)q^{n-3}T_n$ where $q$ is the cardinality of the residue field of $F$. To use $\varphi$ in the global trace formula we need its orbital integrals. First note that $\varphi(1) = 0$. For $\gamma\in G-\{1\}$, its centralizer $G_\gamma$ is a torus associated to a quadratic extension $L$ of $F$. Note that we have normalized the Haar measure by $\vol(\O_L^\times)=1$. It is easy to see that $T$ is conjugation invariant, hence we have \begin{align*}
    O_\gamma(T) = \int_{G_\gamma\backslash G}T(g^{-1}\gamma g)\mathrm dg = T(\gamma)\vol(G_\gamma\lquot G).
\end{align*}
With our measure normalization we have \[\vol(G_\gamma\lquot G)=\begin{cases}
    1&\text{if $L/F$ ramified},\\
    2&\text{if $L/F$ unramified}.
\end{cases}\] Next we compute $T(\gamma)$. If $v_D(\gamma) \not\equiv n\mod 2$, then $T(\gamma)=0$, so assume $v_D(\gamma) \equiv n\mod 2$. It is easily seen that for $t\in F$ \begin{align*}
    \int_{\O_F^\times}\psi(at)\mathrm da = \begin{cases}
        1&\text{if $v(t)>0$},\\
        -\frac1{q-1}&\text{if $v(t)=0$},\\
        0&\text{if $v(t)<0$}.
    \end{cases}
\end{align*}
In our case we let $t=\varpi_F^{(-v_D(\gamma)-n+2)/2}\trd(\gamma)$. 
Putting things together we see that if $v(\Nrd(\gamma))\equiv n\mod 2$, then\begin{align*}
    O_\gamma(\varphi) =  d_n^{-1}(q^2-1)q^{n-3}\vol(G_\gamma\lquot G) \begin{cases}
        1&\text{if $2v(\trd\gamma)>v(\Nrd(\gamma))+n-2$},\\
        -\frac1{q-1}&\text{if $2v(\trd\gamma)=v(\Nrd(\gamma))+n-2$},\\
        0&\text{if $2v(\trd\gamma)<v(\Nrd(\gamma))+n-2$}.
    \end{cases}
\end{align*}

\subsubsection{The Rest of the Trace Formula}\label{section:quat_trace_formula}
Let $k$ be an even positive integer, $n\geq3$ and $\ell, P$ distinct primes with $\ell$ odd. Now let $D$ be the quaternion algebra over $\Q$ ramified at $\{\infty, \ell\}$. View $D^\times$ as an algebraic group $G$ and let $Z$ denote its center. Then let $\overline G = G / Z$. We define a test function $\varphi\in C_c^\infty(\overline G(\A_\Q))$ by $\varphi = \bigotimes_{v}\varphi_v$ with $\varphi_v\in C_c^\infty(\overline G(\Q_v))$ as follows.
\begin{itemize}
    \item For $v=\ell$ we let $\varphi_\ell$ be the function constructed in the previous section.
    \item For $v = P$ let $\varphi_v = \one_{Z(\Q_P)\backslash \GL_2(\Z_P)\left(\begin{smallmatrix}
        P&0\\0&1
    \end{smallmatrix}\right)\GL_2(\Z_P)Z(\Q_P)}$. This acts as the Hecke operator on the $P$-adic representations.
    \item For $v=p\ne\ell, P$ let $\varphi_v = \one_{Z(\Q_p)\backslash \GL_2(\Z_p)Z(\Q_p)}$.
    \item For $v = \infty$ we have $\overline G(\R)=D_\infty^\times/\R^\times\cong \PSU(2)=\SU(2)/\pm1$. Its irreducible representations are exactly the $\Sym^n(\C^2)$ with $n$ even, where $\C^2$ is the standard representation of $\SU(2)$. We say the representation $\Sym^{k-2}(\C^2)$ is of weight $k$. Fix an invariant inner product on $(V,\pi):=\Sym^{k-2}(\C^2)$. Let $v$ be a unit vector in $V$. Then let $\varphi_\infty(g):=(\dim\pi)\overline{\langle \pi(g)v,v\rangle}$ be the complex conjugate of a certain matrix coefficient of $\Sym^{k-2}(\C^2)$. With this normalization we have \begin{align*}
        \tr \pi_\infty(\varphi_\infty)=\begin{cases}
            1&\text{if $\pi_\infty\cong\Sym^{k-2}(\C^2)$},\\
            0&\text{otherwise.}
        \end{cases}
    \end{align*}
\end{itemize}
Since $\overline G(\Q)\lquot \overline G(\A_\Q)$ is compact, we have \begin{align*}
    L^2(\overline G(\Q)\lquot \overline G(\A_\Q)) = \bigoplus_{\substack{\text{automorphic $\pi$}}}\pi^{\oplus m(\pi)} = \bigoplus_{\substack{\text{automorphic $\pi$}\\\text{with $\dim\pi=\infty$}}}\pi^{\oplus m(\pi)} \oplus\bigoplus_{\substack{\text{Hecke character $\chi$}\\\text{with $\chi^2=1$}}}\chi.
\end{align*}
Here the quadratic Hecke characters are viewed as characters of $\overline G(\A_\Q)$ via the reduced norm. By the multiplicity one theorem \cite[p.\ 416]{KnaRogATF}, $m(\pi)=1$ for all automorphic $\pi$. Note that $\varphi_\ell$ acts as $0$ on the one-dimensional representations. Therefore, the spectral side of the simple trace formula reads \begin{align*}
    \tr \big(R(\varphi) \mid L^2(\overline G(\Q)\lquot \overline G(\A_\Q))\big)=(-1)^{k/2+1}\sum_{\pi\in\FF_{D, k}(\ell^n)}\epsi(\pi)P^{1/2}\lambda_P(\pi).
\end{align*}
Note that the local root number at $\infty$ is $(-1)^{k/2+1}$ (it is the negative of the root number of the Jacquet--Langlands transfer, the weight $k$ discrete series representation of $\PGL_2(\R)$, so that the global root numbers are the same). Also note that the one-dimensional characters do not contribute since $\chi_\ell(\varphi_\ell)=0$ for any quadratic Hecke character $\chi$. The geometric side is $J_{\id}(\varphi)+J_{\mathrm{ell}}(\varphi)$ where \begin{align*}
    J_\id(\varphi)&=\vol(\overline G(\Q)\lquot \overline G(\A_\Q))\varphi(1),\\
    J_{\mathrm{ell}}(\varphi)&=\int_{\overline G(\Q)\backslash \overline G(\A_\Q)}\sum_{\substack{\gamma\in \overline G(\Q)\\\text{$\gamma$ elliptic}}}\varphi(g^{-1}\gamma g)\mathrm dg.
\end{align*}
Note that $J_\id(\varphi)=0$ since e.g.\ $\varphi_P(1)=0$. We approach the elliptic terms as in \cite{KniLiTHO} and \cite{GJFGL2APV} by lifting elements in $\overline G(\Q)$ to elements in $G(\Q)$. First note that if $\wtilde\gamma\in \overline G(\Q)$ is such that $g\mapsto \varphi(g^{-1}\gamma g)$ is not identically $0$, then $\wtilde\gamma$ has exactly two lifts to $\gamma\in G(\Q)$ with $\det\gamma=P$. We may then write \begin{align*}
    J_{\mathrm{ell}}(\varphi)&=\frac12\sum_{\substack{\text{elliptic $[\gamma]\subset G(\Q)$}\\\det\gamma=P}}\int_{\overline {G_\gamma(\Q)}\backslash \overline G(\A_\Q)}\varphi(g^{-1}\gamma g)\mathrm dg.
\end{align*}
Here $\overline {G_\gamma(\Q)}\lquot \overline G(\Q) = Z(\Q)\lquot G_\gamma(\Q)$ (which does not equal $\overline G_\gamma(\Q)$ if $\tr\gamma=0$). The sum runs over elliptic conjugacy classes in $G(\Q)$ of determinant $P$. If $\gamma\in G(\Q)\subset D$ is elliptic, then $L = L_\gamma=\Q[\gamma]$ is a quadratic extension of $\Q$ and $G_\gamma(\Q) = L^\times, G_\gamma(\A_\Q)=\A_L^\times$ by \cite[Proposition 26.1]{KniLiTHO}. We then further write as in \cite[p.\ 215]{GJFGL2APV},\begin{align*}
     J_{\mathrm{ell}}(\varphi)&=\frac12\sum_{\substack{\text{elliptic $[\gamma]\subset G(\Q)$}\\\det\gamma=P}}\int_{{L_\gamma}^\times Z(\A_\Q)\backslash G(\A_\Q)}\varphi(g^{-1}\gamma g)\mathrm dg\\
     &=\frac12\sum_{\substack{\text{elliptic $[\gamma]\subset G(\Q)$}\\\det\gamma=P}}\vol({L_\gamma}^\times\A_\Q^\times\lquot \A_{L_\gamma}^\times)\int_{G_\gamma(\A_\Q)\backslash G(\A_\Q)}\varphi(g^{-1}\gamma g)\mathrm dg\\
     &=\frac12\sum_{\substack{\text{elliptic $[\gamma]\subset G(\Q)$}\\\det\gamma=P}}\vol({L_\gamma}^\times\A_\Q^\times\lquot \A_{L_\gamma}^\times)\int_{G_\gamma(\R)\backslash G(\R)}\varphi_\infty(g^{-1}\gamma g)\mathrm dg\int_{G_\gamma(\A_{\Q}^\infty)\backslash G(\A_{\Q}^\infty)}\varphi^\infty(g^{-1}\gamma g)\mathrm dg.
\end{align*}
To compute the archimedean orbital integral, let $\chi$ denote the character of the representation $(V,\pi)=\Sym^{k-2}(\C^2)$ of $\PSU(2) = Z(\R)\lquot G(\R)$. Also note that $L_\infty = L\otimes\R\cong \C$ since $L\hookrightarrow D$. By \cite[Exercise 7.6]{DeiEchHA} we have \begin{align*}
    \int_{Z(\R)\backslash G(\R)} \varphi_\infty(g^{-1}\gamma g)\mathrm dg= \overline{\chi(\gamma)}.
\end{align*}
Therefore,
\begin{align*}
    \int_{G_\gamma(\R)\backslash G(\R)}\varphi_\infty(g^{-1}\gamma g)\mathrm dg = \vol(Z(\R)\lquot L_\infty^\times)^{-1}\overline {\chi(\gamma)}.
\end{align*}
Note by our choice of measure on $L_\infty^\times\cong\C^\times$, the volume is $2\pi$. We have $\overline{\chi(\gamma)} = U_{k-2}\left(\frac{\trd\gamma}{2\sqrt{\nrd\gamma}}\right)$ by the well-known character formula for $\SU(2)$ \cite[Example 12.23]{HallLG}. 

For the volume of the adelic quotient we obtain \begin{align*}
    \vol(L^\times\A_\Q^\times\lquot \A_L^\times) = \frac{4\pi h_L}{w_L},
\end{align*}
where $h_L$ (resp.\ $w_L$) is the class number (resp.\ number of roots of unity) of $L$. This follows essentially from \cite[Theorem 4.3.2]{TatT}. Note that our measure normalization on $\A_L^\times$ is slightly different from \cite{TatT}. We also obtain an extra factor of $2$ due to the comparison of the induced measures on the norm $1$ ideles as in \cite[(4.31)]{KniCLSN}.

Finally, for the nonarchimedean orbital integral, we split off the $\ell$-adic place:\begin{align*}
    \int_{G_\gamma(\A_{\Q}^{\infty})\backslash G(\A_{\Q}^\infty)}\varphi^\infty(g^{-1}\gamma g)\mathrm dg
    &=O_{\gamma_\ell}(\varphi_\ell)\int_{G_\gamma(\A_{\Q}^{\{\infty,\ell\}})\backslash G(\A_{\Q}^{\{\infty,\ell\}})}\varphi^{\{\infty,\ell\}}(g^{-1}\gamma g)\mathrm dg.
\end{align*}
We computed $O_{\gamma_\ell}(\varphi_\ell)$ before. The remaining orbital integral is computed exactly as in \cite{KniLiTHO} and we obtain the following: If $n=2a$ is even, then \begin{align*}
    \tr &\big(R(\varphi) \mid L^2(\overline G(\Q)\lquot \overline G(\A_\Q)) \big) =J_{\mathrm{ell}}(\varphi)\\
    &= \sum_{\substack{t\in\Z, t^2<4P\\ L:=\Q(\sqrt{t^2-4P})\hookrightarrow D}}\frac{h_L}{w_L}U_{k-2}\left(\frac{t}{2\sqrt{P}}\right)O_{\gamma_{t,P},\ell}(\varphi_\ell)\sum_{\substack{\Z[\gamma_{t,P}]\subset\O\subset\O_L\\\text{$\O$ is maximal at $\ell$}}}[\O_L^\times : \O^\times]\frac{h(\O)}{h_L}.
\end{align*}
The sum runs over integers $t$ with $t^2<4P$ and such that $\Q(\sqrt{t^2-4P})$ embeds into $D$. Here $\gamma_{t,P}$ denotes an element $\gamma$ in $D^\times$ with $\trd\gamma=t,\nrd\gamma=P$. Note that in \cite{KniLiTHO} the group is $\GL_2$, and there is no ramified place $\ell$ as in our case. Thus, we only sum over orders away from $\ell$. The last sum ranges over orders $\O$ satisfying the indicated conditions. Using the explicit formula for $O_{\gamma_{t,P},\ell}(\varphi_\ell)$ this easily simplifies to the formula in \cref{proposition:quat_TF}. For example, $\frac{h_L}{w_L} [\O_L^\times : \O^\times]\frac{h(\O)}{h_L}=\frac{h(\O)}{\abs{\O^\times}}$ and in order for $O_{\gamma_{t,P},\ell}(\varphi_\ell)$ to be non-zero, we need $\ell\mid t$, which we assume in the following. Then $\Z[\gamma_{t,P}]$ will already be maximal at $\ell$ since $\ell\nmid t^2-4P$, hence the inner sum will just be $\frac12 H(t^2-4P)$. We can write the local volume term $\vol(G(\Q_\ell)_\gamma\lquot G(\Q_\ell))$ as $1-\legendre{d_{\Q(\sqrt{t^2-4P})}}{\ell}$. Note that this is $0$ if $\Q(\sqrt{t^2-4P})$ does not embed into $D$, hence we may remove the assumption $\Q(\sqrt{t^2-4P})\hookrightarrow D$ by inserting the term $1-\legendre{d_{\Q(\sqrt{t^2-4P})}}{\ell}=1-\legendre{-P}{\ell}$ which accounts for the local centralizer covolume term at the same time.

We note that the same computations in the odd case $n=2a+1$ lead to the formula \begin{align*}
    (-1)^{k/2+1}\sum_{\pi\in\FF_{D, k}(\ell^{2a+1})}&\epsi(\pi)P^{1/2}\lambda_P(\pi) \\
    &=\frac1{2\ell}\sum_{\substack{t\in\Z, t^2<4\ell P}}U_{k-2}\left(\frac{t}{2\sqrt{\ell P}}\right)c_{\ell^{a+1}}(t)H(t^2-4\ell P).
\end{align*}
This is exactly the formula in \cite[Proposition 2.1]{BKLMYMurms}.


\subsection{Proof of \cref{proposition:quat_count}}
This result is presumably well known. Since we are not aware of a reference, we include a proof sketch. The proof will be very similar to that of \cref{proposition:quat_TF}. We first construct a local test function that picks out representations of the given conductor.
\subsubsection{Construction of the Local Test Function}
Let $p$ be an odd prime and let $F$ be a finite extension of $\Q_p$ with residue field cardinality $q$. Let $D$ be a division quaternion algebra over $F$ and $G = D^\times$, $\overline G = D^\times /F^\times$. Let $n\geq3$. Let $e_n = \frac{1}{\vol(K_n)}\one_{K_n}$. Then we have $\pi(e_n)=0$ if and only if $\pi^{K_n}=0$ if and only if $n(\pi)\geq n+2$. So let \[
    \varphi_n = d_n^{-1}(e_{n-1} - e_{n-2}),
\]
where $d_n$ is the dimension of any conductor exponent $n$ representation.
Then \begin{align*}
    \tr\pi(\varphi_n) = \begin{cases}
        1&\text{if $n(\pi)=n$,}\\
        0&\text{if $n(\pi)\ne n$}.
    \end{cases}
\end{align*}

Next we compute the orbital integrals. Let the measures be normalized as before. For $\gamma\in G-\{1\}$, we have \begin{align*}
    O_\gamma(e_n) = \int_{G_\gamma\backslash G}e_n(g^{-1}\gamma g)\mathrm dg = e_n(\gamma)\vol(G_\gamma\lquot G),
\end{align*}
so\[
    O_\gamma(\varphi) = d_n^{-1}(e_{n-1}(\gamma)-e_{n-2}(\gamma))\vol(G_\gamma\lquot G).
\]
In particular, this is non-zero if and only if $\gamma\in K_{n-2}$. Note that \[
    \vol(K_n) = \frac{\vol(U^n_D)}{\vol(U_F^{\lceil n/2\rceil})}=(q+1)^{-1}q^{\lceil n/2\rceil-2n+1}.
\]
Hence we get \[
    O_\gamma(\varphi) = d_n^{-1}\vol(G_\gamma\lquot G)\begin{cases}
        (q+1)(q^{-\lceil (n-1)/2\rceil+2n-3}-q^{-\lceil (n-2)/2\rceil+2n-5})&\text{if $\gamma \in K_{n-1}$},\\
        -(q+1)q^{-\lceil (n-2)/2\rceil+2n-5}&\text{if $\gamma \in K_{n-2}\setminus K_{n-1}$},\\
        0&\text{if $\gamma \notin K_{n-2}$},
    \end{cases}
\] 
and \begin{align*}
    \varphi(1) = d_n^{-1}(q+1)(q^{-\lceil (n-1)/2\rceil+2n-3}-q^{-\lceil (n-2)/2\rceil+2n-5}).
\end{align*}

\subsubsection{The Rest of the Trace Formula}
The other local test functions are defined as before, except that now there is no prime $P$. Then we proceed exactly as before. The main term is now \[
    J_{\id}(\varphi) = \vol(\overline G(\Q)\lquot \overline G(\A_\Q))\varphi(1) = \vol(\overline G(\Q)\lquot \overline G(\A_\Q))(k-1)\varphi_\ell(1).
\]
To compute the volume choose any maximal order $\O$ of $D$ such that $\O_p^\times = \GL_2(\Z_p)$ under the identification $D_p^\times=\GL_2(\Q_p)$ for $p\ne\ell$. Then $\what\O^\times = \prod_{p}K_p=K^\infty$. Choose representatives $I$ of the right ideal classes in $\Cls(\O)$ and choose $\alpha_I\in G(\A_\Q^\infty)$ such that $I = \alpha_I\what\O\cap D$. Then \begin{align*}
    \overline G(\Q)\lquot\overline G(\A_\Q^\infty) / \overline{K^\infty}\simeq G(\Q)\lquot G(\A_\Q^\infty)/K^\infty=\bigsqcup_I G(\Q)\alpha_IK^\infty
\end{align*}
by \cite[(30.4.14)]{VoiQA}. Here $\overline{K^\infty}$ denotes the image of $K^\infty$ in $\overline G(\A_\Q^\infty)$. Therefore,\begin{align*}
    \overline G(\Q)\lquot\overline G(\A_\Q) = \bigsqcup_I\Gamma_I\lquot \overline G(\R)\alpha_I\overline{K^\infty},
\end{align*}
with $\Gamma_I = \overline{G}(\Q)\cap \alpha_I\overline{K^\infty}\alpha_I^{-1}\cong\O_{L}(I)^\times/\{\pm1\}$. Hence, we obtain $\vol(\overline G(\Q)\lquot \overline G(\A_\Q)) = \frac{\ell-1}{12}$ from the Eichler mass formula \cite[Theorem 25.1.1]{VoiQA}. 

The one-dimensional terms vanish again. Indeed, a one-dimensional representation with trivial central character has conductor exponent at most $2$ at $\ell$. Since here $n\geq3$, both $e_{n-1}$ and $e_{n-2}$ act as the identity and hence $\varphi_\ell$ has trace $0$. Therefore, the spectral side is $\sum_{\pi\in\FF_{D, k}(\ell^n)}1$.

The elliptic terms are handled as before, and we get:\begin{align*}
    \sum_{\pi\in\FF_{D, k}(\ell^n)}&1 \\
    &=\frac{\ell-1}{12}(k-1)\varphi_\ell(1)+\frac12\sum_{\substack{t\in\Z, t^2<4}}U_{k-2}\left(\frac{t}{2}\right)O_{\gamma_{t, 1}}(\varphi_\ell)H(t^2-4)\\
    &=\frac{\ell-1}{12}(k-1)d_n^{-1}(\ell+1)(\ell^{-\lceil (n-1)/2\rceil+2n-3}-\ell^{-\lceil (n-2)/2\rceil+2n-5})\\
    &=\frac1{1+\delta_{n\equiv0\mod2}}\frac{k-1}{12}(\ell-1)^2(\ell+1)\ell^{n-3}.
\end{align*}
Here the elliptic terms vanish; indeed, it is not difficult to see that $O_{\gamma}(\varphi_\ell)=0$ if $\nrd\gamma=1,\trd\gamma\in\{0,\pm1\}$, assuming $n\geq4$ or $n=3$ and $\ell>3$.

\subsection{Proof of \cref{proposition:maass_TF}}
The proof is similar in principle to the previous cases, but here we have more terms in the trace formula. The main difficulty is the case of even conductor exponent, where we will have to bound certain Eisenstein contributions.

We first identify $\phi\in H_\maass^\new(N)$ with cuspidal automorphic representations $\pi$ of $\PGL_2(\A_\Q)$ of conductor $N$ and with $\pi_\infty$ principal series. We describe a suitable test function on $\PGL_2(\A_\Q)$. Let $G = \GL_2, \overline G = \PGL_2$. $B$ denotes the algebraic subgroup of $G$ consisting of upper triangular matrices. For a finite prime $p$, let $K_p = \GL_2(\Z_p)$, and let $K_0(\ell^m)$ denote the usual subgroup \begin{align*}
    K_0(\ell^m) = \left\{\begin{pmatrix}
        a&b\\c&d
    \end{pmatrix}\in\GL_2(\Z_\ell)\,\Big\vert \,c\equiv0\mod\ell^m \right\}.
\end{align*}
Define $\varphi = \bigotimes_v\varphi_v:\overline G(\A_\Q) \to\C$, where we define the local factors as follows:
\begin{itemize}
    \item $v = \infty$. Then $\overline G(\R) = \PGL_2(\R)$. Let $\overline{K}_\infty^+ = \PSO(2)$.
    Let $j = \begin{pmatrix}
        1&0\\0&-1
    \end{pmatrix}$. Define $\varphi_\infty(g) = f(gj)$ for $g\in \PGL_2(\R) - \PGL_2^+(\R)$ and $\varphi_\infty = 0$ on $\PGL_2^+(\R)$. Then for a unitary irreducible representation $\pi_\infty$ of $\PGL_2(\R)$, we have \begin{align*}
        \tr\pi_\infty(\varphi_\infty) = \begin{cases}
        \epsi(\pi_\infty)h_f(t_{\pi_\infty})&\text{if $\pi_\infty^{\overline K_\infty^+}\ne0$},\\
        0&\text{if $\pi_\infty^{\overline K_\infty^+}=0$}.
        \end{cases}
    \end{align*}
    Here $t_{\pi_\infty}$ denotes the spectral parameter defined as follows. If $\pi_\infty^{\overline K_\infty}\ne0$, then $\pi_\infty = \Ind_{B(\R)}^{G(\R)}\chi$ for a character of the form $\chi\begin{pmatrix}a&*\\0&d\end{pmatrix} = (\sign(ad))^\delta\abs{\frac ad}^s$ with $\delta\in\{0,1\}$. Then $t_{\pi_\infty} = -is$, $\pi_\infty(f)$ maps into the $\overline{K}_\infty^+$-invariant line and acts on it via $h_f(t_{\pi_\infty})$ by \cite[Proposition 3.9]{KniLiKTF}. The matrix $j$ normalizes $\overline{K}_\infty^+$ and it acts on the $\overline{K}_\infty^+$-invariant line by multiplication by $(-1)^\delta$ which is $\epsi(\pi_\infty)$. This proves the above assertion.
    
    \item $v=\ell$. Let $\varphi_\ell=e_n = \vol(\overline{K_0(\ell^n)})^{-1}\one_{w_{\ell^n}\overline{K_0(\ell^n)}}\in C_c^\infty(\PGL_2(\Q_\ell))$, where $w_{\ell^n} = \begin{pmatrix}
        0&-1\\\ell^n&0
    \end{pmatrix}$. Then we have
    \begin{align*}
        \tr\pi_\ell(e_n) = \begin{cases}
            \epsi(\pi_\ell) &\text{if $n(\pi_\ell)\leq n$ and $n(\pi_\ell)\equiv n\mod 2$},\\
            0 &\text{otherwise},
        \end{cases}
    \end{align*}
    for any infinite-dimensional irreducible admissible representation $\pi_\ell$ of $\PGL_2(\Q_\ell)$. This follows from \cref{lemma:local_intertwiner_en}. Thus, to obtain exactly the conductor $\ell^n$ forms we will eventually consider $e_{n}-e_{n-2}$ where $e_{-1}=0$. 
    \item $v=p =  P$. In this case define $\varphi_P = \one_{\overline{K_P}\left(\begin{smallmatrix}
        P&0\\0&1
    \end{smallmatrix}\right)\overline{K_P}}$. Then for any infinite-dimensional irreducible representation $\pi_P$ of $\overline G(\Q_P)$:
        \begin{align*}
    \tr\pi_P(\varphi_P) = \begin{cases}
        \sqrt P \lambda_P(\pi_P)&\text{if $\pi_P$ is unramified},\\
        0&\text{otherwise.}
    \end{cases}
    \end{align*}
    \item $v=p\ne\ell, P$. In this case define $\varphi_p = \one_{\overline{K_p}}$. Then for any infinite-dimensional irreducible representation $\pi_p$ of $\overline G(\Q_p)$:
        \begin{align*}
    \tr\pi_p(\varphi_p) = \begin{cases}
        1&\text{if $\pi_p$ is unramified},\\
        0&\text{otherwise.}
    \end{cases}
    \end{align*}
\end{itemize}
We note that in any case if $\pi_p = \chi_p\circ\det$ is a one-dimensional representation of $G(\Q_p)$ and $\tr\pi_p(\varphi_p)\ne0$, then $\chi_p$ is unramified.

To emphasize the $n$-dependence, we will also write $\varphi^{(n)} = \varphi$. Let $\psi = \varphi^{(n)}-\varphi^{(n-2)}$. Note that $\psi=(e_{n}-e_{n-2})\otimes \bigotimes_{v\ne\ell}\varphi_v$. Then, putting the above together, we obtain \begin{align*}
    \tr\big( R(\psi)\mid L_\cusp^2(\overline G(\A_\Q))\big)= \sum_{\phi\in H_\maass^\new(\ell^n)}h_f(t_\phi)\epsi(\phi)\sqrt P \lambda_P(\phi).
\end{align*}

We apply the Arthur-Selberg trace formula from \cite{GJFGL2APV} and obtain \begin{align*}
    J_\cusp(\psi):=\tr\big( R(\psi)\mid L_\cusp^2(\overline G(\A_\Q))\big)=J_\id(\psi)-J_{\mathrm{one}}(\psi)+J_{\mathrm{ell}}(\psi)+J_{\mathrm{par}}(\psi)+J_{\mathrm{hyp}}(\psi)+J_{\mathrm{Eis}}(\psi).
\end{align*}
We will now define and compute all these terms. In particular we will show that $J_{\mathrm{ell}}(\psi)$ gives the main term in \cref{proposition:maass_TF}, while the remaining terms are $0$ if $n$ is odd and contribute to the error if $n$ is even. In the following, $m$ will denote either $n$ or $n-2$.

\subsubsection{The Identity Contribution}
Note that $\psi_\infty(1)=0$, so $J_\id(\psi) = \vol(\overline G(\Q)\lquot\overline G(\A_\Q))\psi(1)=0$.

\subsubsection{The One-Dimensional Contribution}
Here \begin{align*}
    J_{\mathrm{one}}(\varphi^{(m)}) = \sum_{\chi^2=1}\tr\chi(\varphi^{(m)}),
\end{align*}
where the sum runs over Hecke characters $\chi:\Q^\times\lquot \A_\Q^\times\to\C^\times$ of order $2$ which are viewed as characters of $\PGL_2(\A_\Q)$ via $\det$. As we noted, if $\pi_\chi(\varphi)\ne0$ for some Hecke character $\chi=\bigotimes_v\chi_v$ of order $2$, then $\chi_p$ is unramified for all finite $p$. Since $\chi$ is of order $2$, $\chi_\infty$ is trivial on $\R_{>0}$. Therefore, $\chi$ is trivial, in which case it is easily seen that $\chi(\varphi^{(m)}) = A(P+1)$ where $A =\int_{\overline G(\R)}f(g)\mathrm dg$.\footnote{In fact, $A = h_f(i/2)$ by an easy computation, but we will not need this.} Hence, \begin{align*}
    J_{\mathrm{one}}(\varphi^{(m)})= A(P+1),
\end{align*}
and therefore for $n\geq2$,\begin{align*}
     J_{\mathrm{one}}(\psi)= 0.
\end{align*}

\subsubsection{Elliptic Contribution}\label{section:maass_elliptic_1}
We have: \begin{align*}
    J_{\mathrm{ell}}(\varphi^{(m)}) &= \int_{\overline G(\Q)\backslash \overline G(\A_\Q)}\sum_{\gamma\in\overline G(\Q)\text{ elliptic}}\varphi^{(m)}(g^{-1}\gamma g)\mathrm dg.
\end{align*}

We treat this contribution as before in \cref{section:quat_trace_formula}. First note that any elliptic conjugacy class for which $g\mapsto \varphi^{(m)}(g^{-1}\gamma g)$ is not identically zero has a representative $\gamma$ in $\GL_2(\Q)$, unique up to sign, of determinant $-\ell^mP$. In particular such elliptic classes become hyperbolic over $\R$. We may then write the elliptic contribution as \begin{align*}\frac12\sum_{\substack{\text{elliptic $[\gamma]\subset G(\Q)$}\\\det\gamma=-\ell^mP}}\vol({L_\gamma}^\times\A_\Q^\times\lquot \A_{L_\gamma}^\times)\int_{G_\gamma(\R)\backslash G(\R)}\varphi^{(m)}_\infty(g^{-1}\gamma g)\mathrm dg\int_{G_\gamma(\A_{\Q}^\infty)\backslash G(\A_{\Q}^\infty)}\varphi^{(m)\infty}(g^{-1}\gamma g)\mathrm dg.
\end{align*}

\begin{lemma}\label{lemma:arch_hyperbolic_orbital_integral}
    Let $\gamma\in G(\R)$ be hyperbolic with $\det\gamma<0$ and let $u_\gamma = -\frac{(\tr\gamma)^2}{\det\gamma}$. Then \begin{align*}
        \int_{G_\gamma(\R)\backslash G(\R)} \varphi_\infty(g^{-1}\gamma g)\mathrm dg = \frac{1}{\sqrt{u_\gamma+4}}Q_f(u_\gamma).
    \end{align*}
\end{lemma}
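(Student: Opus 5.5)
Since the integrand is conjugation-invariant, I would first conjugate $\gamma$ into a normal form. A hyperbolic $\gamma\in G(\R)$ with $\det\gamma<0$ is diagonalizable over $\R$ with eigenvalues $\lambda_1>0>\lambda_2$. Scaling by the center, which does not change $\varphi_\infty$ on $\PGL_2(\R)$ or $u_\gamma$, I may take $\gamma=\mathrm{diag}(\lambda,-\lambda^{-1})$ with $\lambda>0$. Then $u_\gamma=(\lambda-\lambda^{-1})^2$, so $u_\gamma+4=(\lambda+\lambda^{-1})^2$. Here $G_\gamma(\R)=A(\R)$, and with the measures fixed in the normalization table,
\[
\int_{G_\gamma(\R)\lquot G(\R)}\varphi_\infty(g^{-1}\gamma g)\,\mathrm dg=\int_{\overline A(\R)\lquot \PGL_2(\R)}\varphi_\infty(g^{-1}\gamma g)\,\mathrm dg .
\]
I would evaluate the right-hand side using the Iwasawa decomposition $g=n(x)k$. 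Since $\varphi_\infty(hk)$ is not literally $K$-invariant (it is $f(hkj)$), I will use that $j$ normalizes $\overline K_\infty^+$ and that $f$ is biinvariant. This gives $\varphi_\infty(k^{-1}hk)=f(k^{-1}hkj)=f(k^{-1}hj\,(j^{-1}kj))=f(hj)$ for $k\in\PSO(2)$. The $\PSO(2)$-integral therefore contributes its volume $1$, though the determinant $-1$ part of $O(2)$ may require splitting $\PGL_2(\R)$ into its two components.

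This reduces the problem to
\[
\int_\R \varphi_\infty\bigl(n(-x)\gamma n(x)\bigr)\,\mathrm dx .
\]
I compute $n(-x)\gamma n(x)=\begin{pmatrix}\lambda & (\lambda+\lambda^{-1})x\\ 0&-\lambda^{-1}\end{pmatrix}$. Multiplying by $j$ gives $h=\begin{pmatrix}\lambda & -(\lambda+\lambda^{-1})x\\0&\lambda^{-1}\end{pmatrix}$, which lies in $\PGL_2^+(\R)$ and has determinant $1$. By the definition of $V_f$,
\[
f(h)=V_f\!\left(\lambda^2+\lambda^{-2}+(\lambda+\lambda^{-1})^2x^2-2\right)=V_f\!\left(u_\gamma+(\lambda+\lambda^{-1})^2x^2\right).
\]
Substituting $y=(\lambda+\lambda^{-1})x$ yields $\frac{1}{\lambda+\lambda^{-1}}\int_\R V_f(u_\gamma+y^2)\,\mathrm dy=\frac{1}{\sqrt{u_\gamma+4}}Q_f(u_\gamma)$, which is the claimed formula.

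The main obstacle is not this computation but the bookkeeping of measures. I have to check that the quotient measure on $\overline A(\R)\lquot\PGL_2(\R)$, with the Haar measure on $A(\R)\cong\R^\times\times\R^\times$ from the table and on $\PGL_2(\R)$ "compatible with Iwasawa decomposition", really decomposes as $\mathrm dx\,\mathrm dk$ with total $k$-mass $1$. I also have to confirm that no factor of $2$ arises from the component group $\{\pm1\}$ of $\R^\times$, or from the two connected components of $\PGL_2(\R)$ together with the fact that $\varphi_\infty$ is supported only on the non-identity component. I would settle this by writing $\PGL_2(\R)=N\,\overline A\,K$ with $K=\mathrm{PO}(2)$ or $\PSO(2)$ according to the convention implicit in the table, and then matching it against the analogous computation in \cite[Proposition 8.15 and \S 26]{KniLiKTF}, adjusting constants as needed.
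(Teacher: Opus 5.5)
Your proposal is correct and follows the paper's proof essentially step for step. You normalize $\gamma$ to $\mathrm{diag}(\lambda,-\lambda^{-1})$, and you use the Iwasawa decomposition with the $\PSO(2)$-biinvariance of $f$ to reduce to $\int_\R\varphi_\infty(n(-x)\gamma n(x))\,\mathrm dx$; your observation that $j$ normalizes $\overline K_\infty^+$ is exactly what makes that reduction legitimate. You then substitute $y=(\lambda+\lambda^{-1})x$, as the paper does.

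The measure issue you flag resolves directly. $\overline A(\R)\cong\R^\times$ already contains the class of $j$, so $\PGL_2(\R)=N\,\overline A\,\PSO(2)$ with trivial pairwise intersections. Hence the quotient measure on $\overline A\lquot\PGL_2(\R)$ is just $\mathrm dx\,\mathrm dk$ with $\vol(\PSO(2))=1$. No splitting into components and no extra factor of $2$ arises.
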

\begin{proof}
    Without loss of generality we may assume $\gamma=\begin{pmatrix}
        y^{1/2}&0\\0&-y^{-1/2}
    \end{pmatrix}$. Let $t = y^{1/2}-y^{-1/2}$, so that $u_\gamma = t^2$. Then $G_\gamma(\R)$ is the diagonal torus, and using the $\SO(2)$-biinvariance of $f$, the Iwasawa decomposition gives \begin{align*}
        \int_{G_\gamma(\R)\backslash G(\R)} \varphi_\infty(g^{-1}\gamma g)\mathrm dg &=\int_{N(\R)} \varphi_\infty(n^{-1}\gamma n)\mathrm dn\\
        &=\int_{\R} f\begin{pmatrix}
            y^{1/2}&-x(y^{1/2}+y^{-1/2})\\
            0&y^{-1/2}
        \end{pmatrix}\mathrm dx\\
        &=\int_{\R}V_f(t^2+x^2(t^2+4))\mathrm dx\\
        &=\frac{1}{\sqrt{t^2+4}}\int_{\R}V_f(t^2+x^2)\mathrm dx\\
        &=\frac{1}{\sqrt{t^2+4}}Q_f(t^2).
    \end{align*}
\end{proof}
So in our case the archimedean orbital integral is\begin{align*}
    \frac1{\sqrt{\frac{t^2}{\ell^mP}+4}}Q_f\left(\frac{t^2}{\ell^mP}\right),
\end{align*}
where $t = \tr\gamma$. Since now $t^2+4\ell^mP>0$, $L=L_\gamma$ is real quadratic, so the volume term computes to \begin{align*}
    \vol({L_\gamma}^\times\A_\Q^\times\lquot \A_{L_\gamma}^\times)=\frac{8h_LR_L}{w_L},
\end{align*}
where $R_L$ denotes the regulator. 

For the $\ell$-adic orbital integral we have \begin{align*}
    \int_{G_\gamma(\Q_\ell)\backslash G(\Q_\ell)} e_m(g^{-1}\gamma g)\mathrm dg &= \begin{cases}
        \vol(\Z_\ell[\gamma]^\times)^{-1} & \text{if $\ell^m\mid t$,}\\
        0&\text{otherwise}.
    \end{cases}
\end{align*}
Indeed, as in \cite{KniLiTHO} we may identify $G_\gamma(\Q_\ell)\backslash G(\Q_\ell) / K_0(\ell^m)$ with pairs $(\Lambda,\Lambda')$ of lattices in $L_\ell$ with $\Lambda / \Lambda'\cong \Z/\ell^m\Z$ up to homothety. The support condition of $e_m$ then translates to $\gamma \Lambda = \Lambda'$ and $\gamma\Lambda'=\ell^m\Lambda$ and this forces $\ell^m\mid t$. This is the main difference from the computation in \cite{KniLiKTF} because of the presence of the Atkin-Lehner involution in our case.  Then the same arguments as in \cite{KniLiTHO} give \begin{align*}
    \int_{G_\gamma(\Q_\ell)\backslash G(\Q_\ell)} e_m(g^{-1}\gamma g)\mathrm dg &= \sum_{\substack{\Z_\ell[\gamma]\subset\O\subset\O_{L_\gamma}\\ \O/\gamma\O\cong\Z/\ell^m\Z}}\vol(\O^\times)^{-1},
\end{align*}
where the sum runs over orders $\O$. Finally, it is easy to see that the only order $\O$ containing $\Z_\ell[\gamma]$ for which $\O/\gamma\O\cong\Z/\ell^m\Z$ is $\O=\Z_\ell[\gamma]$.

The nonarchimedean orbital integral away from $\ell$ is then exactly as in \cite{KniLiTHO} and putting it together with the integral at $\ell$, we obtain \begin{align*}
    \int_{G_\gamma(\A_{\Q}^\infty)\backslash G(\A_{\Q}^\infty)}\varphi^{(m)\infty}(g^{-1}\gamma g)\mathrm dg = \sum_{\substack{\Z[\gamma]\subset\O\subset\O_{L}\\ \Z_\ell[\gamma]=\O_\ell}}[\O_L^\times : \O^\times]\frac{h(\O)}{h(\O_L)}.
\end{align*}

Putting everything together gives:\begin{align*}
    J_{\mathrm{ell}}(\varphi^{(m)}) &= \int_{\overline G(\Q)\lquot \overline G(\A_\Q)}\sum_{\gamma\in\overline G(\Q)\text{ elliptic}}\varphi^{(m)}(x^{-1}\gamma x)\mathrm dx\\
    &=\sum_{\substack{t\in\Z\\ \ell^m\mid t,\,t^2 + 4\ell^mP\notin\Q^{\times2}\\ L:= \Q(\sqrt{t^2 + 4\ell^mP})}}\frac{4 h_LR_L}{w_L}\frac1{\sqrt{\frac{t^2}{\ell^mP}+4}}Q_f\left(\frac{t^2}{\ell^mP}\right)\sum_{\substack{\Z[\gamma_{t, -\ell^mP}]\subset\O\subset\O_{L}\\ \Z_\ell[\gamma_{t, -\ell^mP}]=\O_\ell}}[\O_L^\times : \O^\times]\frac{h(\O)}{h(\O_L)}.
\end{align*}
Here $\gamma_{t, -\ell^mP}$ denotes any solution of $x^2-tx-\ell^mP=0$. Note that the condition $t^2+4\ell^mP\notin\Q^{\times2}$ is automatic if $m$ is odd. If we write $t^2+4\ell^mP=d_Lf^2$, then \begin{align*}
    &\frac{4 h_LR_L}{w_L}\sum_{\substack{\Z[\gamma_{t, -\ell^mP}]\subset\O\subset\O_{L}}}[\O_L^\times : \O^\times]\frac{h(\O)}{h(\O_L)} \\
    &= L(1,\psi_{d_L})\sqrt{d_L}\sum_{c\mid f}c\prod_{p\mid c}\left(1-\legendre{d_L}{p}\frac1p\right)\\
    &=L(1,\psi_{d_L})\sqrt{d_L}\prod_{p\mid f}\left(1 + (p - \legendre{d_L}{p})\frac{p^{v_p(f)}-1}{p-1}\right)\\
    &=L(1,\psi_{t^2 + 4\ell^mP})\sqrt{d_Lf^2}.
\end{align*}
We may write the sum over orders $\O$ with $\O_\ell =\Z_\ell[\gamma_{t, -\ell^mP}]$ as the difference of the sum over orders with $\O_\ell \supset\Z_\ell[\gamma_{t, -\ell^mP}]$ and the sum over orders with $\O_\ell \supset\Z_\ell[\gamma_{\ell^{-1}t, -\ell^{m-2}P}]=\O_\ell$. We first assume that $m = 2b+1$ is odd. Then this gives \begin{align*}
    &\frac{4 h_LR_L}{w_L}\sum_{\substack{\Z[\gamma_{t, -\ell^mP}]\subset\O\subset\O_{L}\\ \Z_\ell[\gamma_{t, -\ell^mP}]=\O_\ell}}[\O_L^\times : \O^\times]\frac{h(\O)}{h(\O_L)}\\
    &= L(1,\psi_{t^2 + 4\ell^{m}P})\sqrt{t^2+4\ell^{m}P}-L(1,\psi_{\ell^{-2}t^2 + 4\ell^{m-2}P})\sqrt{\ell^{-2}t^2+4\ell^{m-2}P}.
\end{align*}
After some algebra, this simplifies to \begin{align*}
    L(1,\psi_{t^2 + 4\ell^{m}P})\sqrt{t^2+4\ell^{m}P}\left(1-\frac{\ell^{b}-1}{\ell^{b+1}-1}\right).
\end{align*}
Now for $\psi = \varphi^{(n)}-\varphi^{(n-2)}$ with $n=2a+1$ we obtain \begin{align*}
    J_{\mathrm{ell}}(\psi)&=\sum_{\phi\in H_\maass^\new(\ell^n)}h_f(t_\phi)\epsi(\phi)\sqrt P \lambda_P(\phi)\\
    &=\sum_{\substack{t\in\Z\\ \ell^n\mid t}}\frac1{\sqrt{\frac{t^2}{\ell^nP}+4}}Q_f\left(\frac{t^2}{\ell^nP}\right)L(1,\psi_{t^2 + 4\ell^{n}P})\sqrt{t^2+4\ell^{n}P}\left(1-\frac{\ell^{a}-1}{\ell^{a+1}-1}\right)\\
    &\hspace{1em}-\sum_{\substack{t\in\Z\\ \ell^{n-2}\mid t}}\frac1{\sqrt{\frac{t^2}{\ell^{n-2}P}+4}}Q_f\left(\frac{t^2}{\ell^{n-2}P}\right)L(1,\psi_{t^2 + 4\ell^{n-2}P})\sqrt{t^2+4\ell^{n-2}P}\left(1-\frac{\ell^{a-1}-1}{\ell^{a}-1}\right)\\
    &=\sqrt{\ell^nP}\sum_{\substack{t\in\Z\\ \ell^n\mid t}}Q_f\left(\frac{t^2}{\ell^nP}\right)L(1,\psi_{t^2 + 4\ell^{n}P})\left(1-\frac{\ell^{a}-1}{\ell^{a+1}-1}\right)\\
    &\hspace{1em}-\sqrt{\ell^{n-2}P}\sum_{\substack{t\in\Z\\ \ell^{n-2}\mid t}}Q_f\left(\frac{t^2}{\ell^{n-2}P}\right)L(1,\psi_{t^2 + 4\ell^{n-2}P})\left(1-\frac{\ell^{a-1}-1}{\ell^{a}-1}\right).
\end{align*}
In the second sum we change variables $t\to t/\ell$ and obtain \begin{align*}
    &\sum_{\phi\in H_\maass^\new(\ell^n)}h_f(t_\phi)\epsi(\phi)\sqrt P \lambda_P(\phi)\\
    &=\sqrt{\ell^{n-2}P}\sum_{\substack{t\in\Z\\ \ell^{n-1}\mid t}}Q_f\left(\frac{t^2}{\ell^nP}\right)\Bigg(\ell\one_{\ell^n\mid t} L(1,\psi_{t^2 + 4\ell^{n}P})\left(1-\frac{\ell^{a}-1}{\ell^{a+1}-1}\right)\\
    &\hspace{6em}-L(1,\psi_{\ell^{-2}(t^2 + 4\ell^{n}P)})\left(1-\frac{\ell^{a-1}-1}{\ell^{a}-1}\right)\Bigg)\\
    &=\sqrt{\ell^{n}P}\sum_{\substack{t\in\Z\\ \ell^{n-1}\mid t}}Q_f\left(\frac{t^2}{\ell^nP}\right)\Bigg(\one_{\ell^n\mid t} L(1,\psi_{t^2 + 4\ell^{n}P})\left(1-\frac{\ell^{a}-1}{\ell^{a+1}-1}\right)\\
    &\hspace{6em}-L(1,\psi_{t^2 + 4\ell^{n}P})\frac{\ell^{a}-1}{\ell^{a+1}-1}\left(1-\frac{\ell^{a-1}-1}{\ell^{a}-1}\right)\Bigg)\\
    &=\sqrt{\ell^{n}P}\ell^{-a-1}\frac{\ell-1}{\ell^{a+1}-1}\sum_{\substack{t\in\Z}}Q_f\left(\frac{t^2}{\ell^nP}\right)L(1,\psi_{t^2 + 4\ell^{n}P})c_{\ell^n}(t).
\end{align*}
Now change variables to $\ell^as = t$ and we get \begin{align*}
    &\sum_{\phi\in H_\maass^\new(\ell^n)}h_f(t_\phi)\epsi(\phi)\sqrt P \lambda_P(\phi)\\
    &=\sqrt{\ell^{n}P}\ell^{-1}\frac{\ell-1}{\ell^{a+1}-1}\sum_{\substack{s\in\Z}}Q_f\left(\frac{s^2}{\ell P}\right)L(1,\psi_{\ell^{2a}(s^2 + 4\ell P)})c_{\ell^{a+1}}(s)\\
    &=\sqrt{\frac{P}{\ell}}\sum_{\substack{s\in\Z}}Q_f\left(\frac{s^2}{\ell P}\right)L(1,\psi_{s^2 + 4\ell P})c_{\ell^{a+1}}(s).
\end{align*}
Now we consider the case of even $m = 2b$. Similarly as before we obtain \begin{align*}
    &\frac{4 h_LR_L}{w_L}\sum_{\substack{\Z[\gamma_{t, -\ell^mP}]\subset\O\subset\O_{L}\\ \Z_\ell[\gamma_{t, -\ell^mP}]=\O_\ell}}[\O_L^\times : \O^\times]\frac{h(\O)}{h(\O_L)}\\
    &= L(1,\psi_{t^2 + 4\ell^{m}P})\sqrt{t^2+4\ell^{m}P}-L(1,\psi_{\ell^{-2}t^2 + 4\ell^{m-2}P})\sqrt{\ell^{-2}t^2+4\ell^{m-2}P}\\
    &=L(1,\psi_{t^2 + 4\ell^{m}P})\sqrt{t^2+4\ell^{m}P}\frac{\left(\ell-\legendre{d_L}{\ell}\right)(\ell^b-\ell^{b-1})}{\ell-1+\left(\ell-\legendre{d_L}{\ell}\right)(\ell^{b}-1)}.
\end{align*}
Note the character appears since in the even case we have $\ell\nmid d_L$. Therefore \begin{align*}
    J_{\mathrm{ell}}(\varphi^{(m)}) = \sqrt{\ell^mP}\sum_{\substack{t\in\Z\\ \ell^m\mid t,(t^2+4\ell^mP)\notin\Q^{\times2}\\ L:= \Q(\sqrt{t^2 + 4\ell^mP})}}Q_f\left(\frac{t^2}{\ell^mP}\right)L(1,\psi_{t^2 + 4\ell^{m}P})\frac{\left(\ell-\legendre{d_L}{\ell}\right)(\ell^b-\ell^{b-1})}{\ell-1+\left(\ell-\legendre{d_L}{\ell}\right)(\ell^{b}-1)}.
\end{align*}
Now change variables to $t = \ell^bs$. Then $t^2+4\ell^mP = \ell^m(s^2+4P)$. We still have $\ell^b\mid s$, hence $s^2+4P\equiv 4P\mod \ell$ and we obtain $\legendre{d_L}{\ell} = \chi_\ell(P):=\legendre{P}{\ell}$. Then \begin{align*}
     J_{\mathrm{ell}}(\varphi^{(m)}) &= \sqrt{\ell^mP}\sum_{\substack{s\in\Z\\ \ell^b\mid s,\,(s^2+4P)\notin\Q^{\times2}}}Q_f\left(\frac{s^2}{P}\right)L(1,\psi_{\ell^m(s^2 + 4P)})\frac{\left(\ell-\chi_\ell(P)\right)(\ell^b-\ell^{b-1})}{\ell-1+\left(\ell-\chi_\ell(P)\right)(\ell^{b}-1)}\\
     &=\sqrt{P}\ell^{b-1}(\ell-\chi_\ell(P))\sum_{\substack{s\in\Z\\ \ell^b\mid s,\,(s^2+4P)\notin\Q^{\times2}}}Q_f\left(\frac{s^2}{P}\right)L(1,\psi_{s^2 + 4P}).
\end{align*}
Then for $n = 2a$, we get \begin{align*}
    J_{\mathrm{ell}}(\psi) &=\sqrt{P}\ell^{a-1}(\ell-\chi_\ell(P))\sum_{\substack{s\in\Z\\ \ell^a\mid s,\,(s^2+4P)\notin\Q^{\times2}}}Q_f\left(\frac{s^2}{P}\right)L(1,\psi_{s^2 + 4P})\\
    &\hspace{3em}-\sqrt{P}\ell^{a-2}(\ell-\chi_\ell(P))\sum_{\substack{s\in\Z\\ \ell^{a-1}\mid s,\,(s^2+4P)\notin\Q^{\times2}}}Q_f\left(\frac{s^2}{P}\right)L(1,\psi_{s^2 + 4P})\\
    &=\sqrt P \frac{\ell-\chi_\ell(P)}{\ell}\sum_{\substack{s\in\Z\\(s^2+4P)\notin\Q^{\times2}}}Q_f\left(\frac{s^2}{P}\right)L(1,\psi_{s^2 + 4P})c_{\ell^a}(s).
\end{align*}

\subsubsection{Parabolic Contribution}
Here \begin{align*}
    J_{\mathrm{par}}(\varphi^{(m)}) = f.p.\int_{\A^\times}\int_K\varphi^{(m)}\left(k^{-1}\begin{pmatrix}
        1&a\\0&1
    \end{pmatrix}k\right)\mathrm dk\abs a^s\mathrm da.
\end{align*}
where $f.p.$ denotes the constant term in the Laurent series expansion around $s=1$. One easily sees by looking either at the infinite or the $P$-adic place that the integrand is identically $0$, hence the parabolic contribution is $0$. 

\subsubsection{Hyperbolic Contribution}\label{section:hyperbolic_maass1}
Here \begin{align*}
    J_{\mathrm{hyp}}(\varphi) = -\frac12\vol(\Q^\times\lquot \A_\Q^1)\int_K\int_{N(\A)}\sum_{\alpha\in\Q, \,\alpha\ne 1}\varphi\left(k^{-1}n^{-1}\begin{pmatrix}
        \alpha&0\\0&1
    \end{pmatrix}nk\right)\log H(wnk)\mathrm dn\mathrm dk.
\end{align*}
Here $w$ denotes $\begin{pmatrix}
    0&-1\\1&0
\end{pmatrix}$, $K$ is the standard maximal compact subgroup $O(2)\times \GL_2(\what \Z)$ and $H = \bigotimes_v H_v$ where $H_v\left(\begin{pmatrix}
    a_v&*\\0&b_v
\end{pmatrix}k_v\right) =\fabs{\frac{a_v}{b_v}}$. We distinguish the cases $m$ even and $m$ odd.

First assume $m$ is odd. We claim that the integrand is identically $0$. Indeed, suppose $\overline\gamma\in \PGL_2(\Q_\ell)$ is hyperbolic and $e_m(\overline\gamma)\ne0$. We may lift $\overline\gamma$ to $\gamma\in \GL_2(\Q_\ell)$ with $v_\ell(\det\gamma)= m$. Let $t = \tr\gamma$. Then $v_\ell(t)\geq m$ by considering the support of $e_m$. Since $\gamma$ is hyperbolic, $t^2-4\det(\gamma)$ is a square in $\Q_\ell$, but $v_\ell(t^2-4\det(\gamma))=m$, which is a contradiction if $m$ is odd. Hence, the integrand vanishes identically.

Now assume $m$ is even. Note that with our measure normalization, $\vol(\Q^\times\lquot \A^1)=1$. We may write \begin{align*}
    J_{\mathrm{hyp}}(\varphi)=-\frac12\sum_{u}\sum_{\alpha\ne1}J_{\alpha,u}(\varphi_u)\prod_{v\ne u}O_{\alpha,v}(\varphi_v),
\end{align*}
where the sum (resp.\ product) runs over all places $u$ (resp.\ all places $v\ne u$), and
\begin{align*}
    O_{\alpha,v}(\varphi_v)&=\int_{K_v}\int_{N(\Q_v)}\varphi_v\left(k_v^{-1}n_v^{-1}\begin{pmatrix}
    \alpha&0\\0&1
\end{pmatrix}n_vk_v\right)\mathrm dn_v\mathrm dk_v,\\
J_{\alpha,u}(\varphi_u)&=\int_{K_u}\int_{N(\Q_u)}\varphi_u\left(k_u^{-1}n_u^{-1}\begin{pmatrix}
    \alpha&0\\0&1
\end{pmatrix}n_uk_u\right)\log H(wn_uk_u)\mathrm dn_u\mathrm dk_u.
\end{align*}
First note that if $\alpha\ne1$ is such that the integrand is not identically $0$, then there is $z\in \Q_\ell$ such that $g=z\begin{pmatrix}
    \alpha&0\\0&1
\end{pmatrix}$ has $v_\ell(\det g)=m$ and $v_\ell(\tr g)\geq m$, which forces $v_\ell(\alpha)=0$. Similarly, one considers the other places and finds $\alpha=-P$ or $\alpha=-P^{-1}$, the minus sign being forced by the support at $\infty$. Both give the same values since the resulting elements are conjugate, so we assume $\alpha=-P$. We now compute the orbital integrals.
\begin{itemize}
    \item $\infty$. Let $u_\alpha = \frac{(1+\alpha)^2}{-\alpha}$. Then, by \cref{lemma:arch_hyperbolic_orbital_integral} (note $\alpha<0$ here)\begin{align*}
        O_{\alpha,\infty}(\varphi_\infty)=
            \frac{1}{\sqrt{u_\alpha+4}}Q_f\left(u_\alpha\right).
        \end{align*}
        For the weighted orbital integral the same computation as in \cref{lemma:arch_hyperbolic_orbital_integral} gives:\begin{align*}
            J_{\alpha,\infty}(\varphi_\infty)=
            -\frac{1}{\sqrt{u_\alpha+4}}\int_\R V_f(u_\alpha+x^2)\log\left(1+\frac{x^2}{u_\alpha+4}\right)\mathrm dx.
        \end{align*}
\item $p$-adic places. First note that if $n = \begin{pmatrix}
    1&x\\0&1
\end{pmatrix}$, then $H(wn) = \begin{cases}
    \abs x^{-2}&\text{if $v_p(x)<0$},\\
    1&\text{if $v_p(x)\geq0$}.
\end{cases}$\begin{itemize}
    \item $p\ne \ell$. Here we also allow $p=P$. By \cite[p.\ 284]{KniLiTHO} the unweighted orbital integral is $\abs{\alpha-1}^{-1}$. Let $b = v_p(\alpha-1)$. For the weighted orbital integral we obtain \begin{align*}
        J_{\alpha,p}(\varphi_p)&=-2\int_{\Q_p-\Z_p}\varphi_p\left(\begin{pmatrix}\alpha&(\alpha-1)x\\
            0&1
        \end{pmatrix}\right)\log \abs x \mathrm dx\\
        &=-2\int_{\{x\in\Q_p\colon -b\leq v_p(x)< 0\}}\log \abs x\mathrm dx\\
        &=-2\sum_{k=1}^{b}\int_{\{p^{-k}\Z_p^\times\}}\log \abs x\mathrm dx\\
        &=-2\sum_{k=1}^{b}\log(p^k)p^k\frac{p-1}{p}\\
        &=-2(\log p)\left(bp^b-\frac{p^b-1}{p-1}\right).
    \end{align*}
    Note that if $p=P$, then the term is $0$.
    \item $p=\ell$. If $A$ runs over a set of coset representatives for $K_\ell/K_0(\ell^m)$, then \begin{align*}
        O_{\alpha,\ell}(e_m)&= \vol(\overline{K_0(\ell^m)})\sum_A\int_{N(\Q_\ell)}\varphi_\ell\left(A^{-1}n_\ell^{-1}\begin{pmatrix}
    \alpha&0\\0&1
\end{pmatrix}n_\ell A\right)\mathrm dn_\ell\\
&= \sum_A\int_{N(\Q_\ell)}\one_{w_{\ell^m}\overline{K_0(\ell^m)}}\left(A^{-1}n_\ell^{-1}\begin{pmatrix}
    \alpha&0\\0&1
\end{pmatrix}n_\ell A\right)\mathrm dn_\ell.
    \end{align*}
    And similarly for the weighted orbital integral. From \cite[Lemma 13.1]{KniLiTHO} we see that we may take the representatives $\begin{pmatrix}
        \delta&1\\1&0
    \end{pmatrix}$ with $\delta\in\Z/\ell^m\Z$ and $\begin{pmatrix}
        1&0\\\tau&1
    \end{pmatrix}$ with $\tau\in\ell\Z/\ell^m\Z$. It is easily seen that for the former class of representatives the integrand is identically $0$. Thus we consider $A=\begin{pmatrix}
        1&0\\\tau&1
    \end{pmatrix}$. Then the contribution is \begin{align*}
        \int_{\Q_\ell}\one_{w_{\ell^m}\overline{K_0(\ell^m)}}\left(\begin{pmatrix}
            \alpha+(\alpha-1)x\tau&(\alpha-1)x\\
            -(\alpha-1)\tau(1+x\tau)&1-(\alpha-1)x\tau
        \end{pmatrix}\right)\mathrm dx.
    \end{align*}
    Write $m=2j$ and $\beta = \alpha-1$. It is easily seen that the integrand is non-zero if and only if \begin{align*}
        v_\ell(\alpha+\beta x\tau)\geq j, \quad v_\ell(1-\beta x\tau)\geq j, \quad v_\ell(\beta \tau(1+x\tau))= j,\quad v_\ell(\beta x)= -j.
    \end{align*}
    By combining the first two we see that we need $v_\ell(\alpha+1)\geq j$, so $v_\ell(\beta)=0$ and then $v_\ell(x)=-j$. Then it is only possible for $v_\ell(\alpha+\beta x\tau)$ to be $\geq j$ if $v_\ell(\tau)=j$. In this case the integrand is $1$ for $x\in (\beta\tau)^{-1}+\Z_\ell$ and $0$ otherwise, so the integral is $1$, assuming $v_\ell(\alpha+1)\geq j$ and $v_\ell(\tau)=j$. There are $\varphi(\ell^j)=\ell^j-\ell^{j-1}$ many such $\tau$ modulo $\ell^m$, hence \begin{align*}
        O_{-P,\ell}(e_m) = \varphi(\ell^j)\one_{\ell^j\mid P-1},
    \end{align*}
    and similarly \begin{align*}
        J_{-P,\ell}(e_m) = -2j(\log\ell)\varphi(\ell^j)\one_{\ell^j\mid P-1}.
    \end{align*}
    Therefore, if $n=2a$,\begin{align*}
        O_{-P,\ell}(\psi_\ell) &= O_{-P,\ell}(e_n)-O_{-P,\ell}(e_{n-2}) = \frac{\ell-1}{\ell}c_{\ell^{a}}(P-1),\\
        J_{-P,\ell}(\psi_\ell) &= J_{-P,\ell}(e_n)-J_{-P,\ell}(e_{n-2}) = -2(\log\ell)\left(a\varphi(\ell^a)\one_{\ell^a\mid P-1}-(a-1)\varphi(\ell^{a-1})\one_{\ell^{a-1}\mid P-1}\right).
    \end{align*}
\end{itemize}
\end{itemize}
We now put all the orbital integrals together and obtain \begin{align*}
    J_{\mathrm{hyp}}(\psi)=&J_{\alpha,\infty}(\varphi_\infty)\frac{\ell-1}{\ell}c_{\ell^{a}}(P-1)\prod_{p\ne\ell}\abs{P+1}_p^{-1}\\
    &+O_{\alpha,\infty}(\varphi_\infty)J_{-P,\ell}(\psi_\ell)\prod_{p\ne\ell}\abs{P+1}_p^{-1}\\
    &+O_{\alpha,\infty}(\varphi_\infty)\frac{\ell-1}{\ell}c_{\ell^{a}}(P-1)\sum_{q\ne \ell}J_{-P, q}(\varphi_q)\prod_{p\ne q,\ell}\abs{P+1}_p^{-1}.
\end{align*}
Note that $c_{\ell^a}(P-1)=J_{-P,\ell}(\psi_\ell)=0$ if $\ell\mid P+1$, so by the product formula, \begin{align*}
    J_{\mathrm{hyp}}(\psi)=&J_{\alpha,\infty}(\varphi_\infty)\frac{\ell-1}{\ell}c_{\ell^{a}}(P-1)(P+1)\\
    &+O_{\alpha,\infty}(\varphi_\infty)J_{-P,\ell}(\psi_\ell)(P+1)\\
    &+O_{\alpha,\infty}(\varphi_\infty)\frac{\ell-1}{\ell}c_{\ell^{a}}(P-1)\sum_{q\ne \ell}J_{-P, q}(\varphi_q)(P+1)\abs{P+1}_q.
\end{align*}
If we write $m_q = v_q(P+1)$, then \begin{align*}
    \sum_{q\ne \ell}J_{-P, q}(\varphi_q)(P+1)\abs{P+1}_q &= -2(P+1)\sum_{q\ne\ell}(\log q)\left(m_q+\frac{q^{-m_q}-1}{q-1}\right)\\
    &=-2(P+1)\Big(\log(P+1)+\sum_{q\ne\ell}(\log q)\frac{q^{-m_q}-1}{q-1}\Big)\\
    &=O(P\log P).
\end{align*}
By \cref{proposition:Qf_bound} and \cref{proposition:Qf_log_bound} we have \begin{align*}
    O_{\alpha,\infty}(\varphi_\infty),J_{\alpha,\infty}(\varphi_\infty)\ll_{f, b}P^{-b-\frac12}.
\end{align*}
for some $b>\frac12$.
Thus, putting everything together we obtain \begin{align*}
    J_{\mathrm{hyp}}(\psi) \ll_{f,b} (\one_{\ell^{a-1}\mid P-1} + \ell\one_{\ell^a\mid P-1})(a(\log \ell)\ell^{a-1}P^{-b+\frac12}\log P).
\end{align*}
Note in particular, \begin{align*}
    \sum_{\substack{P/\ell^{2a}\in E\\\text{$P$ prime}}}J_{\mathrm{hyp}}(\psi)\log P\ll_{f, b, E} a^2(\log \ell)^2\ell^{2a}.
\end{align*}

\subsubsection{Eisenstein Contribution}\label{section:eisenstein_maass1}
We have \begin{align*}
    J_{\mathrm{Eis}}(\psi) = J_{\mathrm{Eis, 1}}(\psi)+J_{\mathrm{Eis, 2}}(\psi),
\end{align*}
with \begin{align*}
    J_{\mathrm{Eis, 1}}(\psi)&=\frac{1}{4\pi}\sum_{\chi}\int_{-i\infty}^{i\infty} \tr\big(M(\eta_{\chi, s})^{-1}M'(\eta_{\chi, s})\pi_{\eta_{\chi, s}}(\psi)\big)\mathrm ds,\\
    J_{\mathrm{Eis, 2}}(\psi)&=-\frac1{4}\sum_{\chi^2=1}\tr(M(\chi,\chi)\pi_{(\chi,\chi)}(\psi)).
\end{align*}
Here $\eta_{\chi, s} = (\chi\abs{\cdot}^{s},\chi^{-1}\abs{\cdot}^{-s})$ and $\pi_{\eta_{\chi, s}}$ is the induced representation $\Ind_{B(\A)}^{\GL_2(\A)}\eta_{\chi, s}$ and $M(\eta_{\chi, s})$ is an intertwining operator $\pi_{\eta_{\chi, s}}\to\pi_{\eta_{\chi^{-1}, -s}}$, defined in more detail in \cite{GJFGL2APV}. The derivative $M'(\eta_{\chi,s})$ is defined by identifying the underlying space of $\pi_{\eta_{\chi, s}}$ with that of $\pi_{\eta_{\chi, 0}}$, i.e.\ via flat sections in the usual way, see again \cite{GJFGL2APV}. The sum in $J_{\mathrm{Eis, 1}}$ (resp.\ $J_{\mathrm{Eis, 2}}$) runs through finite order (resp.\ order one or two) Hecke characters. The intertwining operator can be normalized by $M(\eta_{\chi, s}) = m(\eta_{\chi, s})R(\eta_{\chi, s}) = \bigotimes_v m_v(\eta_{\chi_v, s})R_v(\eta_{\chi, s})$, where \begin{align*}
    m(\eta_{\chi, s}) = \frac{\Lambda(2s, \chi^2)}{\Lambda(1+2s,\chi^2)},\\
    m_v(\eta_{\chi_v, s}) = \frac{L_v(2s, \chi_v^2)}{L_v(1+2s,\chi_v^2)}.
\end{align*}
Here $L_v$ is the local Euler factor and $\Lambda$ is the completed $L$-function. Again we refer to \cite{GJFGL2APV} and \cite{GelASTF} for more details. We may then write $J_{\mathrm{Eis, 1}}$ as \begin{align*}
    J_{\mathrm{Eis, 1}}(\psi) = &\frac{1}{4\pi}\sum_{\chi}\int_{-i\infty}^{i\infty} \frac{m'(\eta_{\chi, s})}{m(\eta_{\chi, s})}\prod_v\tr(\pi_{\eta_{\chi_v, s}}(\psi_v))\mathrm ds \\
    &+ \sum_{u}\frac{1}{4\pi}\sum_{\chi}\int_{-i\infty}^{i\infty} \tr\big(R_u(\eta_{\chi_u, s})^{-1}R_u'(\eta_{\chi_u, s})\pi_{\eta_{\chi_u, s}}(\psi_u)\big)\prod_{v\ne u}\tr(\pi_{\eta_{\chi_v, s}}(\psi_v))\mathrm ds.
\end{align*}
We look at the first term. We have \begin{align*}
    \prod_v\tr(\pi_{\eta_{\chi_v, s}}(\psi_v)) &= \begin{cases}
        h_f(-is)\sqrt P(\chi_P(P)P^{-s}+\chi_P(P)^{-1}P^{s})&\text{if $c(\chi)^2=\ell^n$},\\
        0&\text{otherwise.}
    \end{cases}
\end{align*}
In particular, it is $0$ if $n$ is odd. If $n=2a$ is even, we will bound the prime average: \begin{align*}
    &\sum_{\substack{P\in \ell^{2a}E\\\text{$P$ prime}}}\log P\sum_{\chi}\int_{-i\infty}^{i\infty} \frac{m'(\eta_{\chi, s})}{m(\eta_{\chi, s})}\prod_v\tr(\pi_{\eta_{\chi_v, s}}(\psi_v))\mathrm ds\\
    &=\sum_{\chi,\,c(\chi)=\ell^a}\sum_{\substack{P\in \ell^{2a}E\\\text{$P$ prime}}}\log P\int_{-i\infty}^{i\infty} \frac{m'(\eta_{\chi, s})}{m(\eta_{\chi, s})}h_f(-is)\sqrt P(\chi_P(P)P^{-s}+\chi_P(P)^{-1}P^{s})\mathrm ds.
\end{align*}
\begin{lemma}\label{lemma:Eis1_error_chi_sum}
    Under GRH, for $s\in i\R$ we have the bound \begin{align*}
    \abs{\sum_{\substack{P\in \ell^{2a}E\\\text{$P$ prime}}}\log P \sqrt P(\chi_P(P)P^{-s}+\chi_P(P)^{-1}P^{s})}\ll_{E}\ell^{2a}(1+\abs s)a^2\log^2(\ell),
\end{align*}
\end{lemma}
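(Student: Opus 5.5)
The sum splits into two symmetric pieces. Since $\chi_P(P)^{-1}P^{s}=\overline{\chi_P(P)P^{-s}}$ for $s\in i\R$ and unitary $\chi$, it suffices to bound
\[
S(s)=\sum_{\substack{P\in \ell^{2a}E\\\text{$P$ prime}}}\chi_P(P)P^{1/2-s}\log P .
\]
First I will identify $\chi_P(P)$ with a Dirichlet character value. Here $\chi$ is a finite order Hecke character of conductor $\ell^a$, since only those contribute. So $\chi$ is unramified at $P\ne\ell$. Writing $\chi=\chi_\infty\prod_p\chi_p$ and using triviality on $\Q^\times$, evaluated at the idele $P$, we get $\chi_P(P)=\prod_{v\ne P}\chi_v(P)^{-1}$. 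Every finite $v\ne P,\ell$ is unramified with $P$ a unit, so it contributes $1$. This leaves $\chi_P(P)=\chi_\infty(P)^{-1}\chi_\ell(P)^{-1}$. Since $P>0$ and $\chi$ has finite order, $\chi_\infty(P)=1$, so $\chi_P(P)=\overline{\chi_\ell(P)}=:\theta(P)$. Here $\theta$ is the primitive Dirichlet character modulo $\ell^a$ attached to $\chi_\ell|_{\Z_\ell^\times}$, and it is nontrivial because $c(\chi)=\ell^a$ with $a\ge2$.

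Next comes the GRH prime sum estimate. Let $\psi(x,\theta)=\sum_{n\le x}\theta(n)\Lambda(n)$. GRH for $L(s,\theta)$ gives $\psi(x,\theta)\ll x^{1/2}\log^2(\ell^a x)$ for $x\ge2$, with no main term since $\theta$ is nonprincipal. Prime powers contribute $O(x^{1/2}\log x)$, so the same bound holds for $\vartheta(x,\theta)=\sum_{p\le x}\theta(p)\log p$. Write $E=[\alpha,\beta]$ and $X=\ell^{2a}$. Summation by parts against $g(u)=u^{1/2-s}$ on $[\alpha X,\beta X]$ gives
\[
S(s)=\vartheta(u,\theta)g(u)\Big|_{\alpha X}^{\beta X}-\int_{\alpha X}^{\beta X}\vartheta(u,\theta)g'(u)\,\mathrm du .
\]
Here $|g(u)|=u^{1/2}$ and $|g'(u)|\le(\tfrac12+|s|)u^{-1/2}$. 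The boundary terms are $\ll_E X\log^2(\ell^aX)$. The integral is $\ll(1+|s|)\int_{\alpha X}^{\beta X}\log^2(\ell^a u)\,\mathrm du\ll_E(1+|s|)X\log^2(\ell^{3a})$. Since $\log(\ell^{3a})\ll a\log\ell$, this gives $|S(s)|\ll_E\ell^{2a}(1+|s|)a^2\log^2\ell$, and the conjugate term obeys the same bound.

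The only step needing care is the first: checking that $\chi_P(P)$ really is a Dirichlet character of modulus $\ell^a$ evaluated at $P$ and not the principal one. This is where conductor exactly $\ell^a$ with $a\ge1$ matters. For the principal character there would be a main term of size $X^{1+\Re(\cdot)}$, but it is excluded here. The remaining analytic input is just the standard GRH explicit-formula bound, uniform in the modulus through the $\log^2(qx)$ factor.
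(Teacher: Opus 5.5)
Your proposal is correct and follows the paper's proof: the paper invokes the GRH bound $\sum_{P\le x}\chi(P)\log P\ll x^{1/2}\log^2(\ell^a x)$ and concludes by partial summation. Your extra steps are the ones the paper leaves implicit: identifying $\chi_P(P)$ with a nonprincipal Dirichlet character modulo $\ell^a$ (which needs $c(\chi)=\ell^a$), and discarding the prime powers.
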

\begin{proof}
    GRH gives the bound \begin{align*}
        \sum_{P\leq x}\chi(P)\log P \ll x^{1/2}\log^2(\ell^a x).
    \end{align*}
    The lemma then follows by partial summation.
\end{proof}
By the functional equation we have \begin{align*}
        \frac{m'(\eta_{\chi, s})}{m(\eta_{\chi, s})} &= 2\frac{\Lambda'}{\Lambda}(2s,\chi^2)-2\frac{\Lambda'}{\Lambda}(1+2s,\chi^2)\\
        &=-2\frac{\Lambda'}{\Lambda}(1-2s,\chi^{-2})-2\frac{\Lambda'}{\Lambda}(1+2s,\chi^2).
    \end{align*}
    Let $q'$ be the conductor of $\chi^2$. Then we have\begin{align*}
        \frac{\Lambda'}{\Lambda}(z,\chi^2) = \frac12\log q'+\frac{\gamma'}{\gamma}(z,\chi^2)+\frac{L'}{L}(z,\chi^2).
    \end{align*}
    Here $\frac{\gamma'}{\gamma}(z,\chi^2)$ is a certain expression in the Gamma function, which by Stirling is bounded by $\log(q'(\abs s+2))$ \cite[p. 103]{IwaKowAnaNT}. By \cite[Proposition 5.7]{IwaKowAnaNT} we have \begin{align*}
        \abs{\frac{L'}{L}(1-2iy,\chi^{-2}) + \frac{L'}{L}(1+2iy, \chi^2)} \ll \sum_{\substack{\rho=\beta+i\gamma\\\abs{1+2iy-\rho}<1}}\frac{2(1-\beta)}{(1-\beta)^2+(2y-\gamma)^2} + O(\log(q'(\abs {y}+2))),
    \end{align*}
    where the sum is over non-trivial zeros $\rho$ of $L(s,\chi^2)$.
    
Let $W(y) = \abs{h_f(y)}(1+\abs y)\ll_f (1+\abs y)^{-1-\epsi}$. We have \begin{align*}
    \int_{\{\abs{1+2iy-\rho}<1\}}W(y)\frac{2(1-\beta)}{(1-\beta)^2+(2y-\gamma)^2}\mathrm dy&\ll\sup_{\abs{2y-\gamma}\leq1}W(y).
\end{align*} 
Therefore,
\begin{align*}
    &\int_{-i\infty}^{i\infty}\abs{\frac{m'(\eta_{\chi, s})}{m(\eta_{\chi, s})}h_f(-is)}(1+\abs s)\mathrm ds\\
    &\ll_f 1+\log(q')+\sum_{T\in\Z}\#\{\rho=\beta+i\gamma:\gamma\in [T, T+1]\}\sup_{\abs{2y-T}\leq2}W(y)\\
    &\ll_f 1+\log(q')+\sum_{T\in\Z}\log(q'(\abs T+2))(1+\abs T)^{-1-\epsi}\\
    &\ll_f 1+\log q'.
\end{align*}
Here we used \cite[Proposition 5.7 (a)]{IwaKowAnaNT} to bound the number of zeros. Putting things together we obtain, noting $q'\leq q = \ell^a$:
\begin{align*}
     &\abs{\sum_{\substack{P\in \ell^{2a}E\\\text{$P$ prime}}}\log P\sum_{\chi}\int_{-i\infty}^{i\infty} \frac{m'(\eta_{\chi, s})}{m(\eta_{\chi, s})}\prod_v\tr(\pi_{\eta_{\chi_v, s}}(\psi_v))\mathrm ds}\\
     &\ll_{E}\ell^{2a}a^2\log^2\ell\sum_{c(\chi)=\ell^a}\int_{-i\infty}^{i\infty}\abs{\frac{m'(\eta_{\chi, s})}{m(\eta_{\chi, s})}h_f(-is)}(1+\abs s)\mathrm ds\\
     &\ll_{E, f}\ell^{3a}a^3\log^3\ell.
\end{align*}
The last part could be simplified using GRH, since we are assuming it in other parts anyway.

Now to the remaining terms in $J_{\mathrm{Eis, 1}}$. Let \[S_u = \sum_{\chi}\int_{-i\infty}^{i\infty} \tr\big(R_u(\eta_{\chi_u, s})^{-1}R_u'(\eta_{\chi_u, s})\pi_{\eta_{\chi_u, s}}(\psi_u)\big)\prod_{v\ne u}\tr(\pi_{\eta_{\chi_v, s}}(\psi_v))\mathrm ds.\] 
If $u\ne\ell$, then $\pi_{\eta_{\chi_u, s}}$ has its range in the one-dimensional space of $K_u$ fixed vectors and $R_u$ is constant on this, so $S_u=0$.

For the remaining term $u=\ell$ we need the following lemma on how $R_u$ acts:
\begin{lemma}\label{lemma:local_intertwiner_en}
    Let $p$ be a prime and let $\pi$ be an infinite-dimensional irreducible admissible representation of $\PGL_2(\Q_p)$ of conductor exponent $N\leq n$. Let $v_0$ be any non-zero vector in $\pi^{K_0(p^N)}$. For $k\geq0$ define $v_k = \pi(a_k)v_0$ where $a_k=\begin{pmatrix}
        1&0\\
        0& p^{k}
    \end{pmatrix}$. \begin{enumerate}
        \item $v_0,\dots,v_{n-N}$ is a basis for $\pi^{K_0(p^n)}$.
        \item $e_n = \vol(\overline{K_0(p^n)})^{-1}\one_{w_{p^n}K_0(p^n)}$ acts as \begin{align*}
            \pi(e_n)v_{i} = \epsi(\pi)v_{n-N-i},
        \end{align*}
        for $i=0,\dots,n-N$ where $\epsi(\pi)$ is the root number of $\pi$.
        \item Suppose $\chi$ is a unitary ramified character of $\Q_p^\times$ of conductor exponent $c$, and $s\in i\R$. Suppose $\pi=\pi_{\eta_{\chi, s}}$, where $\eta_{\chi,s}=(\chi\abs\cdot^s,\chi^{-1}\abs\cdot^{-s})$ as before, and $v_{0, \chi, s} = f_{\chi, s}$, characterized by $f_{\chi, s}\left(\begin{pmatrix}
            a&b\\0&d
        \end{pmatrix}\gamma_ck\right)=\chi(a)\chi^{-1}(d)$ with $a,d\in\Z_p^\times$, $b\in\Z_p$, $k\in K_0(p^N)$, $\gamma_c=\begin{pmatrix}
            1&0\\p^{c}&1
        \end{pmatrix}$, and $f_{\chi, s}=0$ on $K-B(\Z_p)\gamma_c K_0(p^N)$. Let $v_{k,\chi,s} = \pi(a_k)v_{0,\chi,s}$. Then \begin{align*}
            R_p(\eta_{\chi, s}) v_{k, \chi, s} = C_{\chi}(s)v_{k, \chi^{-1}, -s}
        \end{align*}
        for $0\leq k\leq n-N$, where \begin{align*}
            C_\chi(s) = \chi(p)^{2c}p^{-2cs}\frac{\epsi(1-2s,\chi^{-2},\psi)\epsi(1+s, \chi,\psi)}{\epsi(1-s, \chi^{-1},\psi)}\frac{L_p(1+2s, \chi^2)}{L_p(1-2s, \chi^{-2})}.
        \end{align*} Here $\psi$ is an additive character on $\Q_p$ of level $0$. Moreover, $v_{k,\chi,s} = p^{ks}\wtilde v_{k,\chi,s}$ where $\wtilde v_{k,\chi, s}\vert_K$ is independent of $s$, i.e.\ $\wtilde v_{k,\chi,s}$ is a flat section of the principal series.
    \end{enumerate}
\end{lemma}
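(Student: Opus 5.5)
The proof treats the three parts separately. Parts (1) and (2) are local newform theory, and (3) is a direct computation of the intertwining operator on the newvector.

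For (1), I would invoke Casselman's theory of local newforms for $\PGL_2(\Q_p)$ (equivalently the oldforms theorem in Schmidt's ``Some remarks on local newforms for $\GL(2)$''). It gives $\dim\pi^{K_0(p^n)}=n-N+1$ for $n\geq N$. The vectors $v_k=\pi(a_k)v_0$ with $0\le k\le n-N$ are $K_0(p^n)$-invariant, because $a_k^{-1}K_0(p^n)a_k\subset K_0(p^N)$ for $k\le n-N$. They are linearly independent, which one can read off from the Kirillov model: the functions $\xi_0(p^{-k}\cdot)$ have different supports or valuations. Hence they form a basis.

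For (2), the function $e_n$ is $\vol(\overline{K_0(p^n)})^{-1}$ times the indicator of $w_{p^n}\overline{K_0(p^n)}$. So $\pi(e_n)=\pi(w_{p^n})\circ\mathrm{pr}$, where $\mathrm{pr}$ is the projection onto $\pi^{K_0(p^n)}$. On a fixed vector $v_i$ it therefore suffices to compute $\pi(w_{p^n})v_i$, using the matrix identity
\[
w_{p^n}a_i=\begin{pmatrix}0&-p^i\\p^n&0\end{pmatrix}=p^i\,a_{n-N-i}\,w_{p^N}.
\]
Since the central character is trivial, this gives $\pi(e_n)v_i=\pi(a_{n-N-i})\pi(w_{p^N})v_0$. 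It remains to use the standard fact that the newvector is an eigenvector of the Atkin--Lehner element $w_{p^N}$ with eigenvalue $\epsi(\pi)=\epsi(\tfrac12,\pi)$, for $\psi$ of level $0$ (Schmidt, Theorem 3.2.2). Together with (1), this also yields the trace formula for $\tr\pi(e_n)$ used earlier. The trace is the number of fixed points of $i\mapsto n-N-i$ times $\epsi(\pi)$, which is $\epsi(\pi)$ if $n\equiv N \bmod 2$ and $0$ otherwise.

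For (3), here $N=2c$. The newvector space of $\pi_{\eta_{\chi^{-1},-s}}$ is one-dimensional and spanned by $f_{\chi^{-1},-s}$, and $R_p(\eta_{\chi,s})$ is a $G$-intertwiner. So $R_p f_{\chi,s}=C_\chi(s)f_{\chi^{-1},-s}$ for some scalar, and by equivariance under $\pi(a_k)$ the same scalar works for every $v_k$. To determine $C_\chi(s)$ I would not evaluate the intertwining integral $\int f_{\chi,s}(w n(x)\gamma_c)\,dx$ directly. Instead I would pass to Whittaker functionals, using the Jacquet--Langlands local coefficient. The composite $\Lambda_{\chi^{-1},-s}\circ M$ is a multiple of $\Lambda_{\chi,s}$, with the multiple expressed through $\gamma(2s,\chi^2,\psi)$-type factors. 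Dividing by the normalization $m_p$ converts the $L$-factors into the displayed ratio $L_p(1+2s,\chi^2)/L_p(1-2s,\chi^{-2})$, and the $\gamma$-factors into the $\epsi$-factors.

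One then has to compute the Whittaker values $\Lambda(f_{\chi,\pm s})$ of the newvectors. This is a Gauss-sum computation via the Jacquet integral over the support coset $B\gamma_cK_0(p^N)$, and it produces $\epsi(1+s,\chi,\psi)/\epsi(1-s,\chi^{-1},\psi)$ together with the factor $\chi(p)^{2c}p^{-2cs}$. This bookkeeping of Gauss sums and $\epsi$-factor conventions is the step I expect to be the main obstacle. If it becomes unwieldy, I would fall back on a direct evaluation at $g=\gamma_c$, splitting the $x$-integral by $v_p(x)$.

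Finally, the flatness claim follows from the Iwasawa decomposition of $k a_k$ for $k\in K$. The factor $a_k=\mathrm{diag}(1,p^k)$ contributes $\abs{p^{-k}}^{s}=p^{ks}$ from $\abs{a/d}^s$, times an $s$-independent part. Hence $\wtilde v_{k,\chi,s}=p^{-ks}v_{k,\chi,s}$ has $K$-restriction independent of $s$.
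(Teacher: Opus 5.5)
Parts (1) and (2) match the paper's proof. For (1) the paper simply cites newform theory (Roberts--Schmidt). For (2) it uses your identity $w_{p^n}a_i=p^ia_{n-N-i}w_{p^N}$ together with Schmidt's Theorem 3.2.2. Your reduction of (3) to a single scalar, via the one-dimensional newvector line and $\pi(a_k)$-equivariance, is also what the paper does.

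For the scalar itself you take a different route. The paper quotes Schmidt's formula (39) for $M_p$ on the newform and notes that Schmidt's newform is $\chi(p)^{-c}p^{cs}f_{\chi,s}$; this renormalization is the source of $\chi(p)^{2c}p^{-2cs}$. It then divides by $m_p$. Your local-coefficient argument is a valid, more self-contained alternative, and the step you expect to be hard is short:
\begin{itemize}
\item $\gamma(2s,\chi^2,\psi)^{-1}/m_p=\epsi(1-2s,\chi^{-2},\psi)L_p(1+2s,\chi^2)/L_p(1-2s,\chi^{-2})$.
\item With $n(x)=\left(\begin{smallmatrix}1&x\\0&1\end{smallmatrix}\right)$, the value $f_{\chi,s}(wn(x))$ vanishes unless $v_p(x)=-c$. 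So the Jacquet integral of $f_{\chi,s}$ is $\chi(p)^{2c}p^{-2cs}$ times an $s$-independent Gauss sum of $\chi^{-1}$.
\item Dividing by the analogous value for $(\chi^{-1},-s)$ gives $\chi(p)^{2c}p^{-2cs}\epsi(1+s,\chi,\psi)/\epsi(1-s,\chi^{-1},\psi)$, up to the usual sign and measure conventions.
\end{itemize}

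The genuine gap is the flatness claim at the end. For $g\in K$ write $ga_k=bk'$ with $b\in B(\Q_p)$ and $k'\in K$. The modulus $\abs{b_{11}/b_{22}}$ depends on $g$. For instance, $wa_k=\mathrm{diag}(p^k,1)w$ gives $p^{-k}$. More generally, if $g_{22}\in\Z_p^\times$ and $v_p(g_{21})=j<k$, one gets $p^{2j-k}$. So $a_k$ does not uniformly contribute $p^{ks}$.

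Your argument never uses that $\chi$ is ramified. It would therefore equally show that $p^{-ks}\pi(a_k)f_0$ is flat for the spherical vector $f_0$ of an unramified principal series. That is false: its values at $1$ and $w$ are $\chi(p)^{-k}p^{k/2}$ and $\chi(p)^{k}p^{-2ks-k/2}$.

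What is missing is a support argument. The paper shows $\supp(v_{k,\chi,s}\vert_K)=B(\Z_p)\gamma_{c+k}K_0(p^n)$. Using $\gamma_{c+k}a_k=a_k\gamma_c$ and $a_k^{-1}K_0(p^n)a_k\subset K_0(p^N)$, it computes the value $\chi(p)^{-k}p^{k(s+1/2)}\chi(a)\chi^{-1}(d)$ there. A minimal repair of your argument runs as follows:
\begin{itemize}
\item Since $c\geq1$, every element of $B(\Z_p)\gamma_cK_0(p^N)$ has unit lower-right entry, so $f_{\chi,s}(k')=0$ unless $k'_{22}\in\Z_p^\times$.
\item The condition $k'_{22}\in\Z_p^\times$ holds exactly when $g_{22}\in\Z_p^\times$ and $v_p(g_{21})\geq k$.
\item That is precisely the set where the modulus equals $p^k$. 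On it, $f_{\chi,s}(ga_k)=\chi(b_{11})\chi^{-1}(b_{22})p^{k(s+1/2)}f_{\chi,s}(k')$, which is $p^{ks}$ times an $s$-independent function.
\end{itemize}
This step cannot be skipped, because the paper later uses exactly this flatness to compute $R_\ell'$ through flat sections.
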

Taking this for granted at the moment, we may now bound $S_\ell$. For $0\leq k\leq m-N$, we compute $R_\ell'(\eta_{\chi_\ell, s})v_{k,\chi_\ell,s}$ via the flat section $\wtilde v_{k,\chi,s}$:
\begin{align*}
    R_\ell(\eta_{\chi_\ell, s})^{-1}R_\ell'(\eta_{\chi_\ell, s})v_{k,\chi_\ell,s} &= \frac{(C_{\chi_\ell}(s)\ell^{-2ks})'}{C_{\chi_\ell}(s)\ell^{-2ks}} v_{k,\chi_\ell, s}.
\end{align*}
Therefore, if $\pi_{\eta_{\chi_\ell, s}}$ has conductor exponent $N$, then \begin{align*}
     R_\ell(\eta_{\chi_\ell, s})^{-1}R_\ell'(\eta_{\chi_\ell, s})\pi_{\eta_{\chi_\ell, s}}(e_m) v_{k,\chi_\ell,s} = \begin{cases}
        \epsi(\pi_{\eta_{\chi_\ell, s}})\frac{(C_{\chi_\ell}(s)\ell^{-2(m-N-k)s})'}{C_{\chi_\ell}(s)\ell^{-2(m-N-k)s}} v_{m-N-k,\chi_\ell, s}&\text{if $N\leq m$,}\\
        0&\text{if $N>m$.}
     \end{cases}
\end{align*}
Then for the trace we get \begin{align*}
    &\tr\Big(R_\ell(\eta_{\chi_\ell, s})^{-1}R_\ell'(\eta_{\chi_\ell, s})\pi_{\eta_{\chi_\ell, s}}(e_m)\Big) \\
    &= \begin{cases}
        \epsi(\pi_{\eta_{\chi_\ell, s}})\left(\frac{C_{\chi_\ell}'(s)}{C_{\chi_\ell}(s)}-(m-N)\log \ell\right)&\text{if $N\leq m$ and $N\equiv m\mod 2$,}\\
        0&\text{otherwise,}
    \end{cases}
\end{align*}
and \begin{align*}
    \tr\Big(R_\ell(\eta_{\chi_\ell, s})^{-1}R_\ell'(\eta_{\chi_\ell, s})\pi_{\eta_{\chi_\ell, s}}(e_n-e_{n-2})\Big) = \begin{cases}
        \epsi(\pi_{\eta_{\chi_\ell, s}})\frac{C_{\chi_\ell}'(s)}{C_{\chi_\ell}(s)}&\text{if $N = n$,}\\
        -2\epsi(\pi_{\eta_{\chi_\ell, s}})\log \ell&\text{if $N \leq n-2$ and $N\equiv n\mod 2$,}\\
        0&\text{otherwise.}
    \end{cases}
\end{align*}
In particular, we see that also in this case the trace vanishes for $n$ odd as $\pi_{\eta_{\chi_\ell, s}}$ has even conductor exponent. So now assume that $n=2a$ with $a\geq2$. In the case $N=n$, this implies that $\chi_\ell^2$ is still ramified, with the same conductor exponent $a$. Using the formula $\epsi(s, \chi, \psi) = \epsi(\frac12,\chi,\psi)\ell^{-n(\chi)(s-\frac12)}$, we get in this case\[C_{\chi_\ell}(s) = \chi_\ell(\ell)^{2a}\ell^{-2as}\frac{\epsi(\frac12, \chi_\ell^{-2},\psi)\ell^{-a(\frac12-2s)}\epsi(\frac12, \chi_\ell,\psi)\ell^{-a(\frac12+s)}}{\epsi(\frac12, \chi_\ell^{-1},\psi)\ell^{-a(\frac12-s)} } = k_{\chi_\ell}\ell^{-2as},
\]
with a constant $k_{\chi_\ell}$, so \begin{align*}
    \frac{C_{\chi_\ell}'(s)}{C_{\chi_\ell}(s)} = -2a\log\ell.
\end{align*}
Let $A(\chi) = \begin{cases}
    -2a\log\ell &\text{if $c(\chi) = \ell^a$},\\
    -2\log\ell&\text{if $c(\chi)\mid\ell^{a-1}$}.
\end{cases}$ Then we get \begin{align*}
    S_\ell&=\sum_{\chi, c(\chi)\mid\ell^a}\int_{-i\infty}^{i\infty}\epsi(\pi_{\eta_{\chi, s}})A(\chi)h_f(-is)\sqrt P(\chi_P(P)P^{-s} + \chi_P(P)^{-1}P^s)\mathrm ds\\
    &=\sum_{\chi, c(\chi)\mid\ell^a}\int_{-i\infty}^{i\infty}A(\chi)h_f(-is)\sqrt P(\chi_P(P)P^{-s} + \chi_P(P)^{-1}P^s)\mathrm ds.
\end{align*}
The global root number of the principal series representation is $1$, so have we dropped it. We have \begin{align*}
    \sum_{\chi, c(\chi)\mid\ell^a}A(\chi)\chi_P(P) &= -2a\log\ell\sum_{\chi\mod \ell^a}\chi(P) -2(1-a)\log\ell\sum_{\chi\mod\ell^{a-1}}\chi(P).
\end{align*}
Here on the right side we sum over Dirichlet characters. By character orthogonality, we obtain \begin{align*}
    \sum_{\chi, c(\chi)\mid\ell^a}A(\chi)\chi_P(P) = -2a(\log\ell) \varphi(\ell^a)\one_{P\equiv 1\mod\ell^a}-2(1-a)(\log\ell)\varphi(\ell^{a-1})\one_{P\equiv 1\mod\ell^{a-1}}.
\end{align*}
Therefore,
\begin{align*}
    \sum_{\substack{P\in\ell^{2a}E\\\text{$P$ prime}}}\sum_{\chi, c(\chi)\mid\ell^a}\sqrt PA(\chi)\chi_P(P)\log P \ll_E a\ell^{3a}\log\ell,
\end{align*}
which gives \begin{align*}
    \sum_{\substack{P\in\ell^{2a}E\\\text{$P$ prime}}} S_\ell\log P\ll_{E, f}a\ell^{3a}\log\ell,
\end{align*}

Combining this with the first part gives (assuming GRH):
\begin{align*}
    \sum_{\substack{P\in\ell^{2a}E\\\text{$P$ prime}}}J_{\mathrm{Eis, 1}}\ll_{E, f}\ell^{3a}a^3\log^3\ell.
\end{align*}

\begin{proof}[Proof of \cref{lemma:local_intertwiner_en}]~
    \begin{enumerate}
        \item This follows from general newform theory, see \cite[p. 4]{RobSchLNFGSp4}.
        \item By \cite[Theorem 3.2.2]{SchLNF}, $\pi(w_{p^N})v_0 = \epsi(\pi)v_0$. The rest follows from the simple computation \begin{align*}
            \pi(e_n)v_k = \pi\left(\begin{pmatrix}
                0&-1\\p^n&0
            \end{pmatrix}\begin{pmatrix}
        1&0\\
        0& p^{k}
    \end{pmatrix}\right)v_0 = \pi\left(p^k\begin{pmatrix}
        1&0\\0&p^{n-N-k}
    \end{pmatrix}w_{p^N}\right)v_0=\epsi(\pi)v_{n-N-k}.
        \end{align*}
        \item First note that by \cite{SchLNF}, $f_{\chi,s}$ is indeed in $\pi^{K_0(p^N)}$. In \cite[(20)]{SchLNF} the newform is normalized to be $\chi(p)^{-c}p^{cs}f_{\chi,s}$. Thus, \cite[(39)]{SchLNF} gives \begin{align*}
            M_p(\eta_{\chi_p, s})f_{\chi,s} = \chi(p)^{2c}p^{-2cs}\frac{\epsi(1-2s,\chi^{-2},\psi)\epsi(1+s, \chi,\psi)}{\epsi(1-s, \chi^{-1},\psi)}\frac{L_p(2s, \chi^2)}{L_p(1-2s, \chi^{-2})}f_{\chi^{-1},-s},
        \end{align*}
        where $\psi$ is of level $0$. By definition, $M_p(\eta_{\chi, s}) = m_p(\eta_{\chi, s})R_p(\eta_{\chi_p, s})$, with $m_p(\eta_{\chi, s}) = \frac{L_p(2s, \chi^2)}{L_p(1+2s,\chi^2)}$. This gives the scalar $C_{\chi}(s)$. The result for the remaining $v_{k, \chi, s}$ follows since $R_p$ is an intertwining operator. For the last claim, let $0\leq j\leq n-N$ and note that $\gamma_{c+j}a_j=a_j\gamma_c$ and \[\supp(v_{j,\chi,s}\vert_K) = (B(\Q_p)\gamma_c K_0(p^N)a_j^{-1}) \cap K=B(\Z_p)\gamma_{c+j} K_0(p^{n}).\] For $k\in K_0(p^n)$ and $a,d\in\Z_p^\times$, we then have \begin{align*}
            v_{j,\chi, s}\left(\begin{pmatrix}
                a&*\\0&d
            \end{pmatrix}\gamma_{c+j}k\right) &= f_{\chi, s}\left(\begin{pmatrix}
                a&*\\0&d
            \end{pmatrix}\gamma_{c+j}ka_j\right) \\
            &= f_{\chi, s}\left(\begin{pmatrix}
                a&*\\0&dp^j
            \end{pmatrix}\gamma_{c}a_j^{-1}ka_j\right)\\
            &=\chi(p)^{-j}p^{j(s+\frac12)}\chi(a)\chi^{-1}(d),
        \end{align*}
        and the last claim follows.
    \end{enumerate}
\end{proof}

It remains to consider \[J_{\mathrm{Eis, 2}}(\psi)=-\frac1{4}\sum_{\chi^2=1}\tr(M(\chi,\chi)\pi_{(\chi,\chi)}(\psi)).
\]
Note that $M(\chi,\chi):\pi_{(\chi,\chi)}\to \pi_{(\chi,\chi)}$ acts as a scalar since the representation is irreducible. So we only have to consider $\tr(\pi_{(\chi,\chi)}(\psi))$. Note that at the $\ell$-adic place $\pi_{(\chi_\ell,\chi_\ell)}$ has conductor dividing $\ell^2$. It then follows from \cref{lemma:local_intertwiner_en} (2) that this trace is $0$.

\subsection{Proof of \cref{proposition:maass_count}}
We proceed in a similar way as in the preceding section. We again define a test function to count the representations. Let $\varphi = \bigotimes_v\varphi_v:\overline G(\A_\Q) \to\C$, where we define the local factors as follows:
\begin{itemize}
    \item $v = \infty$. Let $\varphi_\infty = f$. Then for a unitary irreducible representation $\pi_\infty$ of $\PGL_2(\R)$, we have \begin{align*}
        \tr\pi_\infty(\varphi_\infty) = \begin{cases}
        h_f(t_{\pi_\infty})&\text{if $\pi_\infty^{\overline K_\infty^+}\ne0$},\\
        0&\text{if $\pi_\infty^{\overline K_\infty^+}=0$}.
        \end{cases}
    \end{align*}
    \item $v=p\ne\infty$. \begin{itemize}
        \item $p=\ell$. Let $e_n = \vol(\overline{K_0(\ell^n)})^{-1}\one_{\overline{K_0(\ell^n)}}\in C_c^\infty(\PGL_2(\Q_\ell))$. Then we have
        \begin{align*}
            \tr\pi_\ell(e_n) = \dim \pi_\ell^{\overline{K_0(\ell^n)}}=\begin{cases}
                n-n(\pi)+1&\text{if $n(\pi)\leq n$},\\
                0 &\text{otherwise},
            \end{cases}
        \end{align*}
        for any infinite-dimensional irreducible admissible representation $\pi_\ell$ of $\PGL_2(\Q_\ell)$. This follows from \cite{CasAL}. Thus, to obtain exactly the conductor $\ell^n$ forms we will eventually consider $e_{n}-2e_{n-1}+e_{n-2}$.
        \item $p\ne\ell$. In this case define $\varphi_p = \one_{\overline{K_p}}$. Then for any infinite-dimensional irreducible representation $\pi_p$ of $\overline G(\Q_p)$:
            \begin{align*}
        \tr\pi_p(\varphi_p) = \begin{cases}
            1&\text{if $\pi_p$ is unramified},\\
            0&\text{otherwise.}
        \end{cases}
        \end{align*}
    \end{itemize}
\end{itemize}

To emphasize the $n$-dependence, we will again also write $\varphi^{(n)} = \varphi$. Let $\psi = \varphi^{(n)}-2\varphi^{(n-1)}+\varphi^{(n-2)}$. Then, putting the above together, we obtain \begin{align*}
    \tr\big( R(\psi)\mid L_\cusp^2(\overline G(\A_\Q))\big) = \sum_{\phi\in H_\maass^\new(\ell^n)}h_f(t_\phi).
\end{align*}

We again apply the Arthur-Selberg trace formula from \cite{GJFGL2APV} and obtain
\begin{align*}
    J_\cusp(\psi):=\tr\big( R(\psi)\mid L_\cusp^2(\overline G(\A_\Q))\big)=J_\id(\psi)-J_{\mathrm{one}}(\psi)+J_{\mathrm{ell}}(\psi)+J_{\mathrm{par}}(\psi)+J_{\mathrm{hyp}}(\psi)+J_{\mathrm{Eis}}(\psi).
\end{align*}
We show that the identity term gives the main asymptotic and the remaining terms are $\ll_f n(\log\ell)\ell^{n/2}$.

\subsubsection{The Identity Contribution}
In this case, the identity contribution is the main term. To compute $\psi(1)$, note that $\vol(\overline{K_0(\ell^n)})=\frac{1}{\ell^{n-1}(\ell+1)}$. With our measure normalization we have $\vol(\overline G(\Q)\lquot\overline G(\A_\Q))=\vol(\overline G(\Q)\lquot\overline G(\A_\Q) / \overline{K}_\infty^+ \overline{K}^\infty)=\vol(\PSL_2(\Z)\lquot\H)=\frac\pi3$, so 
\begin{align*}
    J_\id(\psi) &= \vol(\overline G(\Q)\lquot\overline G(\A_\Q))\psi(1)\\
    &=\frac\pi3f(1)\left({\ell^{n-1}(\ell+1)}-2{\ell^{n-2}(\ell+1)}+{\ell^{n-3}(\ell+1)}\right)\\
    &=\frac\pi3 f(1)\ell^{n-3}(\ell+1)(\ell-1)^2.
\end{align*}

\subsubsection{The One-Dimensional Contribution}
Here \begin{align*}
    J_{\mathrm{one}}(\psi) = \sum_{\chi^2=1}\tr\chi(\psi).
\end{align*}
This is $0$ for the same reason as before.

\subsubsection{Elliptic Contribution}
We compute $J_{\mathrm{ell}}$ exactly as in \cref{section:maass_elliptic_1}. The nonarchimedean orbital integral is again as in \cite{KniLiTHO}, except that elliptic classes of positive discriminant now also contribute. We obtain \begin{align*}
    J_{\mathrm{ell}}(\varphi^{(m)}) = \sum_{\substack{t\in\Z\\ t^2-4\notin\Q^{\times2}, t\ne\pm2,\\ L:= \Q(\sqrt{t^2 -4})}}\sqrt{\abs{D_L}}L(1,\psi_{D_L})O_{\gamma_{t,1},\infty}(f)\sum_{\substack{\Z[\gamma_{t, 1}]\subset\O\subset\O_{L}}}[\O_L^\times : \O^\times]\frac{h(\O)}{h(\O_L)}A_\ell(\O)B_\ell(\O).
\end{align*}
Here $\gamma_{t,1}$ denotes a solution to $x^2-tx+1=0$ and $D_L$ denotes the discriminant of $L$. Let $M_\O=[\O:\Z[\gamma_{t,1}]]$. Then $A_\ell(\O)$ is the number of $c\in (\Z/\ell^m\Z)^\times$ that lift to some $\wtilde c\in \Z/\ell^m\gcd(\ell^m, M_\O)\Z$ satisfying \[\wtilde c^2-t\wtilde c+1\equiv0\mod \ell^m\gcd(\ell^m,  M_\O),\]
and $B_\ell(\O) = \frac{\psi_\ell(\ell^m)}{\psi_\ell(\ell^m / (\ell^m, M_\O))}$, where $\psi_\ell(\ell^k) = \ell^k(1+\ell^{-1})$ for $k\geq1$ and $\psi_\ell(1)=1$. The term $A_\ell(\O)B_\ell(\O)$ is from \cite[Proposition 26.38]{KniLiTHO}.

\begin{lemma}\label{lemma:ell_ell_orbital_integral_bound}
    In the above setting, let $s = v_\ell(M_\O)$. Then \begin{align*}
        A_\ell(\O)B_\ell(\O)\ll\ell^{m/2+s/2}.
    \end{align*}
\end{lemma}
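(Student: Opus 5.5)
We bound $A_\ell(\O)$ and $B_\ell(\O)$ separately and then combine them. Write $g=\gcd(\ell^m,M_\O)=\ell^{\min(s,m)}$.

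For $B_\ell(\O)$ no work is needed: from the definition of $\psi_\ell$,
\[
B_\ell(\O)=\frac{\psi_\ell(\ell^m)}{\psi_\ell(\ell^{m-\min(s,m)})}\ll \ell^{\min(s,m)}.
\]

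For $A_\ell(\O)$ the trivial count is not enough, so we exploit the quadratic congruence. $A_\ell(\O)$ is at most the number of residues $c$ mod $\ell^m$ that are reductions of solutions of $x^2-tx+1\equiv0 \bmod \ell^m g$. Since $x$ is a unit, completing the square (recall $\ell$ is odd) reduces this to counting solutions $y$ mod $\ell^{m+\min(s,m)}$ of $y^2\equiv \Delta$, where $\Delta=t^2-4$, and then projecting them mod $\ell^m$.

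Write $\Delta=\ell^{\delta}\Delta'$ with $\ell\nmid\Delta'$. There are three regimes for a modulus $\ell^e$.
\begin{enumerate}
\item If $\delta\ge e$, then the solutions of $y^2\equiv0$ mod $\ell^e$ are exactly $y\equiv0 \bmod \ell^{\lceil e/2\rceil}$. There are $\ell^{\lfloor e/2\rfloor}$ of them.
\item If $\delta<e$, then $\delta$ must be even and $y=\ell^{\delta/2}y'$ with $y'^2\equiv\Delta'$ mod $\ell^{e-\delta}$. So $y'$ lies in at most $2$ classes mod $\ell^{e-\delta}$, and $y$ lies in at most $2$ classes mod $\ell^{e-\delta/2}$.
\end{enumerate}
Projecting mod $\ell^m$ with $e=m+\min(s,m)$ gives a bound on the number of classes mod $\ell^m$ in each case: $\ell^{m-\lceil e/2\rceil}$ in the first case and $\le 2\,\ell^{\max(0,\,m-e+\delta/2)}\le 2\ell^{\delta/2}$ in the second.

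The key input is that the conductor index is controlled by $\Delta$. Since $\O\supset\Z[\gamma_{t,1}]$ has index $M_\O$, we have $M_\O^2\mid \Delta$ up to the fundamental discriminant, so $2s\le\delta$. We then run through the cases.
\begin{itemize}
\item First case, $\delta\ge e$: $A_\ell\le \ell^{m-\lceil(m+\min(s,m))/2\rceil}\le \ell^{(m-\min(s,m))/2}$.
\item Second case, $\delta<e$: $A_\ell\ll\ell^{\delta/2}$. Here $\delta<m+\min(s,m)$ gives $\delta/2< (m+\min(s,m))/2$, but this is not enough by itself, so we also use the trivial bound $A_\ell\le\varphi(\ell^m)$ together with $\delta/2$. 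We combine these with $B_\ell\ll\ell^{\min(s,m)}$.
\end{itemize}

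In the first case the product is $A_\ell B_\ell\ll \ell^{(m+\min(s,m))/2}\le \ell^{m/2+s/2}$. In the second case I first expected the product to be roughly $\ell^{\delta/2+\min(s,m)}$, which looks too large. The resolution is that when $s>0$, the projection must be taken only over the lifts required by the definition of $A_\ell$. A more careful count of those lifts gives solutions mod $\ell^m$ lying in at most $2$ classes mod $\ell^{m-\delta/2}$ only when $\delta/2\le m$, so $A_\ell\ll \ell^{\min(\delta/2,m)}/\ell^{\min(s,m)}$ times the multiplicity coming from lifting. This is the delicate bookkeeping step. I expect it to produce exactly the exponent $\tfrac m2+\tfrac s2$ via $\delta/2\le(m+\min(s,m))/2$, i.e.
\[
A_\ell B_\ell\ll \ell^{(m+\min(s,m))/2}\le\ell^{m/2+s/2}.
\]

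The main obstacle is this careful accounting in the case where $\Delta$ is not highly divisible by $\ell$: the factor $B_\ell$ must be matched against the loss from reducing lifts mod $\ell^m g$ to $\ell^m$. My fallback is to compute $A_\ell B_\ell$ directly as the local orbital integral count from \cite[Proposition 26.38]{KniLiTHO}, namely the number of lattices fixed by $\gamma$ in the relevant $\ell$-adic tree ball, and bound that by $\ell^{(m+s)/2}$ using the geometry of $\gamma$-fixed vertices at distance $\le s$ from $\O_\ell$.
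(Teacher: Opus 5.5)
\textbf{There is a genuine gap in your second case.} Your first case and your bound $B_\ell(\O)\ll\ell^{r}$ with $r=\min(s,m)$ are fine and agree with the paper. The second case ($\delta<e=m+r$) is never finished: you end with ``I expect it to produce exactly the exponent'' and a fallback, not an argument.

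The difficulty is self-inflicted. You correctly showed that the solutions project to at most $2\ell^{\max(0,\,m-e+\delta/2)}$ classes mod $\ell^m$. With $e=m+r$ this is $2\ell^{\max(0,\,\delta/2-r)}$. You then weakened it to $2\ell^{\delta/2}$, which throws away exactly the factor $\ell^{-r}$ that cancels $B_\ell(\O)\ll\ell^{r}$. That is where the spurious product $\ell^{\delta/2+r}$ came from.

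There is also no extra ``multiplicity coming from lifting'' to track. Any root of $c^2-tc+1$ is automatically a unit, and $c\mapsto 2c-t$ is a bijection compatible with reduction. So $A_\ell(\O)$ is precisely the number of reductions mod $\ell^m$ of solutions of $y^2\equiv t^2-4 \bmod \ell^{m+r}$, which is what you already counted.

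To close the gap, keep the sharp bound. Since $\delta<m+r$, we have $\delta/2-r<(m-r)/2$. Hence $A_\ell(\O)\le 2\ell^{(m-r)/2}$ in the second case too, and
\[
A_\ell(\O)B_\ell(\O)\ll \ell^{(m-r)/2}\,\ell^{r}=\ell^{(m+r)/2}\le \ell^{(m+s)/2}.
\]
Equivalently, $A_\ell(\O)B_\ell(\O)\ll \ell^{\max(r,\delta/2)}$, and both $r$ and $\delta/2$ are at most $(m+r)/2$. This is exactly the paper's argument: writing $t^2-4=\ell^{2b}u$, it gets $A_\ell(\O)=2\ell^{\max\{b-r,0\}}$ and uses $2b<m+r$.

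The input $2s\le\delta$ that you call the key step is not needed, and neither is the lattice-counting fallback via \cite{KniLiTHO}.
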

We assume this for the moment and bound the elliptic contribution. For fixed $t$, let $t^2-4 = D_Lf^2$. Then note that \begin{align*}
    &\abs{\sqrt{\abs{D_L}}L(1,\psi_{D_L})\sum_{\substack{\Z[\gamma_{t, 1}]\subset\O\subset\O_{L}}}[\O_L^\times : \O^\times]\frac{h(\O)}{h(\O_L)}A_\ell(\O)B_\ell(\O)}\\
    &\leq\ell^{m/2}\abs{\sqrt{\abs{D_L}}L(1,\psi_{D_L})\sum_{\substack{\Z[\gamma_{t, 1}]\subset\O\subset\O_{L}}}[\O_L^\times : \O^\times]\frac{h(\O)}{h(\O_L)}\ell^{v_\ell([\O:\Z[\gamma_{t,1}]])/2}}\\
    &=\ell^{m/2}\abs{\sqrt{\abs{D_L}}L(1,\psi_{D_L})\sum_{k\mid f}k\ell^{v_\ell(f/k)/2}\prod_{p\mid k}\left(1-\legendre{d_L}{p}\frac1p\right)}\\
    &=\ell^{m/2}\Bigg\vert\sqrt{\abs{D_L}}L(1,\psi_{D_L})\prod_{p\mid f, p\nmid \ell}\left(1 + (p - \legendre{d_L}{p})\frac{p^{v_p(f)}-1}{p-1}\right)\\
    &\hspace{10.33em}\cdot\prod_{p\mid (f,\ell)}p^{v_\ell(f)/2}\left(1+\frac{p^{(v_\ell(f)+1)/2}-p^{1/2}}{p^{1/2}-1}\left(1-\legendre{d_L}{p}\frac1p\right)\right)\Bigg\vert\\
    &\ll\ell^{m/2}\abs{\sqrt{\abs{D_L}}L(1,\psi_{D_L})\prod_{p\mid f}4p^{v_p(f)}}\\
    &\ll\ell^{m/2}\abs{\sqrt{\abs{D_L}}L(1,\psi_{D_L})4^{\omega(f)}f}\\
    &\ll_\epsi\ell^{m/2}\abs{D_L}^{\frac12+\epsi}f^{1+\epsi}\\
    &\ll_\epsi \ell^{m/2}(1+\abs t)^{1+\epsi}.
\end{align*}
Here $\omega(f)$ denotes the prime divisor counting function and we use the well-known bounds $2^{\omega(f)}\leq \sigma_0(f)\ll_\epsi f^\epsi$ and $L(1,\psi_D)\ll\log\abs D$.

Next the orbital integral at $\infty$. First note that there are only finitely many $t$ such that $t^2-4<0$ in which case we bound trivially $O_{\gamma_{t,1},\infty}(f)\ll_f1$. For $\abs{t}>2$ we compute as before in \cref{lemma:arch_hyperbolic_orbital_integral}, \begin{align*}
    O_{\gamma_{t,1},\infty}(f) = \frac{Q_f(t^2-4)}{\sqrt{t^2-4}}.
\end{align*}
Hence, by \cref{proposition:Qf_bound} if we choose $\epsi$ small enough:\begin{align*}
    J_{\mathrm{ell}}(\psi)\ll_\epsi \ell^{n/2}\left(\sum_{\substack{t\in\Z\\\abs t\geq3}}\frac{\abs{Q_f(t^2-4)}}{\sqrt{t^2-4}}(1+\abs t)^{1+\epsi} + O_f(1)\right)\ll_{f,\epsi}\ell^{n/2}.
\end{align*}

\begin{proof}[Proof of \cref{lemma:ell_ell_orbital_integral_bound}]
    Since $\ell$ is odd, $A_\ell(\O)$ is the number of $x\in\Z/\ell^m\Z$ that lift to some $\wtilde x\in\Z/\ell^m\gcd(\ell^m,M_\O)\Z$ satisfying \begin{align*}
        \wtilde x^2\equiv t^2-4\mod \ell^m\gcd(\ell^m,  M_\O).
    \end{align*}
    Also note that $\gcd(\ell^m,  M_\O) = \ell^{r}$ with $r = \min\{m, s\}\leq m$. Let $D_t = t^2-4$. We now distinguish cases.\begin{itemize}
        \item Case $D_t\equiv0\mod\ell^{m+r}$. Then we must have $\wtilde x\equiv0\mod \ell^{\lceil(m+r)/2\rceil}$ and this gives $\ell^{m-\lceil(m+r)/2\rceil}$ many choices for $x$. Therefore, $A_\ell(\O)\leq\ell^{(m-r)/2}$.
        \item Case $D_t\not\equiv0\mod\ell^{m+r}$. Then either there is no solution to the congruence or $D_t = \ell^{2b}u$ with $u$ a square in $(\Z/\ell\Z)^\times$. Then solutions $\wtilde x$ are of the form $\ell^bv$ with $v^2\equiv u\mod \ell^{m+r-2b}$. Note that modulo $\ell^{m+r-2b}$ there are only two such $v$. Fixing two such $v_1,v_2$, all the solutions modulo $\ell^{m+r}$ are $\ell^{b}(v_i+\ell^{m+r-2b}w)=\ell^bv_i+\ell^{m+r-b}w$ with $w\in \Z/\ell^{b}\Z$. There are $2\ell^{\max\{b-r,0\}}$ many images of these in $\Z/\ell^m\Z$. Hence, $A_\ell(\O)= 2\ell^{\max\{b-r,0\}}\leq2\ell^{(m-r)/2}$.
    \end{itemize}
    Therefore, $B_\ell(\O)A_\ell(\O) \ll \frac{\psi_\ell(\ell^m)}{\psi_\ell(\ell^{m-r})}\ell^{(m-r)/2}\ll\ell^{(m+r)/2}\leq\ell^{(m+s)/2}.$
\end{proof}

\subsubsection{Parabolic Contribution}
For $x\in\A_\Q$ let \begin{align*}
    F(x) &= \int_{K}\psi\left(k^{-1}\begin{pmatrix}
        1&x\\0&1
    \end{pmatrix}k\right)\mathrm dk \\
    &= \prod_{v} \int_{K_v}\psi_v\left(k_v^{-1}\begin{pmatrix}
        1&x_v\\0&1
    \end{pmatrix}k_v\right)\mathrm dk_v\\
    &=:\prod_{v}F_v(x_v).
\end{align*}
So if $Z_F(s) = \int_{\A^\times}F(x)\abs x^s\mathrm dx$, then $Z_F(s)=\prod_v Z_{F_v}(s)$ where $Z_{F_v}(s) = \int_{\Q_v^\times}F_v(t_v)\abs{t_v}^s\mathrm d{t_v}$. Let $\Lambda(s)$ denote the completed Riemann zeta function and $\theta(s) = \Lambda(s)^{-1}Z_F(s)$ as in \cite[p. 242]{GJFGL2APV}. Then $\theta$ is holomorphic at $1$ and \begin{align*}
    J_{par}(\psi) = \lambda_{-1}\theta'(1)+\lambda_0\theta(1),
\end{align*}
where $\Lambda(s) = \frac{\lambda_{-1}}{s-1}+\lambda_0+\dots$. We may write \begin{align*}
    \theta(1) = \prod_{v}L_v(1)^{-1}Z_{F_v}(1) = L_\infty(1)^{-1}Z_{F_\infty}(1)L_\ell^{-1}Z_{F_\ell}(1),
\end{align*}
where $L_v(s)$ denotes the local Euler factor of $\Lambda$. The derivative is \begin{align*}
    \theta'(1) = \sum_{u}\prod_{v\ne u}L_v(1)^{-1}Z_{F_v}(1)\frac{\mathrm d}{\mathrm ds}\Big\vert_{s=1}\Big(L_u(s)^{-1}Z_{F_u}(s)\Big).
\end{align*}
First note that all terms with $u\ne\infty, \ell$ vanish since $Z_{F_u}=L_u$ at those unramified places. So it remains to bound $Z_{F_\ell}(1)$ and $Z_{F_\ell}'(1)$. Recall that $\psi_\ell$ is a linear combination of the projectors $e_m$. If $A$ runs over a set of coset representatives for $K_\ell/K_0(\ell^m)$, then we have \begin{align*}
    &\int_{\Q_\ell^\times}\int_{K_\ell}e_m\left(k^{-1}\begin{pmatrix}
        1&t\\0&1
    \end{pmatrix}k\right)\mathrm dk (\log\abs t)\abs t\mathrm dt\\
    &=\sum_A\int_{\Q_\ell^\times}\one_{Z(\Q_\ell)K_0(\ell^m)}\left(A^{-1}\begin{pmatrix}
        1&t\\0&1
    \end{pmatrix}A\right)(\log\abs t)\abs t\mathrm dt.
\end{align*}
Note that here $\mathrm dt$ always denotes Haar measure on the respective group, in this case multiplicative. From \cite[Lemma 13.1]{KniLiTHO} we see that we may take the representatives $\begin{pmatrix}
    \delta&1\\1&0
\end{pmatrix}$ with $\delta\in\Z/\ell^m\Z$ and $\begin{pmatrix}
    1&0\\\tau&1
\end{pmatrix}$ with $\tau\in\ell\Z/\ell^m\Z$. In the former case we obtain \begin{align*}
    \int_{\Q_\ell^\times}\one_{Z(\Q_\ell)K_0(\ell^m)}\begin{pmatrix}
    1&0\\t&1
\end{pmatrix}(\log\abs t)\abs t\mathrm dt&=\int_{\Q_\ell^\times}\one_{\ell^m\Z_\ell}(t)(\log\abs t)\abs t\mathrm dt\\
&=\sum_{k=m}^\infty\log(\ell^{-k})\ell^{-k}\\
&=-(\log\ell)\frac{\ell^{1-m}(m\ell-m+1)}{(\ell-1)^2}.
\end{align*}
Thus, the contribution of these cosets is $O(m\log\ell)$.

Next for $A = \begin{pmatrix}
    1&0\\\tau&1
\end{pmatrix}$ with $\tau\in\ell\Z/\ell^m\Z$ we obtain \begin{align*}
    \int_{\Q_\ell^\times}\one_{Z(\Q_\ell)K_0(\ell^m)}\begin{pmatrix}
    1+t\tau&t\\-\tau^2 t&1-\tau t
\end{pmatrix}(\log\abs t)\abs t\mathrm dt.
\end{align*}
We have $\begin{pmatrix}
    1+t\tau&t\\-\tau^2 t&1-\tau t
\end{pmatrix}\in Z(\Q_\ell)K_0(\ell^m)$ if and only if $t\in\Z_\ell$ and $t\tau^2\in\ell^m\Z_\ell$. Let $r = v_\ell(\tau)$. We have $r=\infty$ for $\tau=0$ and $1\leq r\leq m-1$ otherwise. Then the integrand is non-zero exactly when $v_\ell(t)\geq M_r:=\max(0, m-2r)$, hence \begin{align*}
    \int_{\Q_\ell^\times}\one_{Z(\Q_\ell)K_0(\ell^m)}\begin{pmatrix}
    1+t\tau&t\\-\tau^2 t&1-\tau t
\end{pmatrix}(\log\abs t)\abs t\mathrm dt&=\int_{v_\ell(t)\geq M_r}(\log\abs t)\abs t\mathrm dt\\
&=\sum_{k=M_r}^\infty\log(\ell^{-k})\ell^{-k}\\
&=-(\log\ell)\frac{\ell^{1-M_r}((\ell-1)M_r+1)}{(\ell-1)^2}\\
&\ll(\log\ell)(M_r+1)\ell^{-M_r}.
\end{align*}

The number of $\tau\in\ell\Z/\ell^m\Z$ with $1\leq v_\ell(\tau)=r\leq m-1$ is $(\ell-1)\ell^{m-r-1}$, hence the contribution of these cosets is \begin{align*}
    &\ll\sum_{r=1}^{m-1}(\ell-1)\ell^{m-r-1}(\log\ell)(M_r+1)\ell^{-M_r} + O(\log \ell).
\end{align*}
The sum over $r<m/2$ is $\ll m(\log\ell)\ell^{m/2}$ and the sum over $r\geq m/2$ is also $\ll m(\log\ell)\ell^{m/2}$. To summarize,
\begin{align*}
    \abs{Z_{F_\ell}'(1)}\ll n(\log\ell)\ell^{n/2}.
\end{align*}
Similarly, one bounds \begin{align*}
    \abs{Z_{F_\ell}(1)}\ll\ell^{n/2}.
\end{align*}
This can also be deduced from \cite[Proposition 25.7]{KniLiTHO}.

Combining these bounds shows that \begin{align*}
    J_{\mathrm{par}}(\psi)\ll_{f}n(\log\ell)\ell^{n/2}.
\end{align*}

\subsubsection{Hyperbolic Contribution}
Recall that \begin{align*}
    J_{\mathrm{hyp}}(\psi) = -\frac12\vol(\Q^\times\lquot \A_\Q^1)\int_K\int_{N(\A_\Q)}\sum_{\alpha\in\Q^\times, \,\alpha\ne 1}\psi\left(k^{-1}n^{-1}\begin{pmatrix}
        \alpha&0\\0&1
    \end{pmatrix}nk\right)\log H(wnk)\mathrm dn\mathrm dk.
\end{align*}
We claim that the integrand is identically $0$. Indeed, the support condition of the nonarchimedean components of $\psi$ force $\alpha=\pm1$, the archimedean component gives $\alpha>0$, hence $\alpha=1$ which is excluded in the sum.

\subsubsection{Eisenstein Contribution}
Again we write \begin{align*}
    J_{\mathrm{Eis}}(\psi) = J_{\mathrm{Eis, 1}}(\psi)+J_{\mathrm{Eis, 2}}(\psi),
\end{align*}
with \begin{align*}
    J_{\mathrm{Eis, 1}}(\psi)&=\frac{1}{4\pi}\sum_{\chi}\int_{-i\infty}^{i\infty} \frac{m'(\eta_{\chi, s})}{m(\eta_{\chi, s})}\prod_v\tr(\pi_{\eta_{\chi_v, s}}(\psi_v))\mathrm ds \\
    &\hspace{2em}+ \sum_{u}\frac{1}{4\pi}\sum_{\chi}\int_{-i\infty}^{i\infty} \tr\big(R_u(\eta_{\chi_u, s})^{-1}R_u'(\eta_{\chi_u, s})\pi_{\eta_{\chi_u, s}}(\psi_u)\big)\prod_{v\ne u}\tr(\pi_{\eta_{\chi_v, s}}(\psi_v))\mathrm ds\\
    J_{\mathrm{Eis, 2}}(\psi)&=-\frac1{4}\sum_{\chi^2=1}\tr(M(\chi,\chi)\pi_{(\chi,\chi)}(\psi)).
\end{align*}
The term $J_{\mathrm{Eis, 2}}(\psi)$ vanishes for the same reason as before in \cref{section:eisenstein_maass1}, so let us focus on the first. For the first term in $J_{\mathrm{Eis, 1}}(\psi)$, we have \begin{align*}
    \sum_{\chi}\int_{-i\infty}^{i\infty} \frac{m'(\eta_{\chi, s})}{m(\eta_{\chi, s})}\prod_v\tr(\pi_{\eta_{\chi_v, s}}(\psi_v))\mathrm ds &= \sum_{\chi, c(\chi)^2=\ell^n}\int_{-i\infty}^{i\infty} \frac{m'(\eta_{\chi, s})}{m(\eta_{\chi, s})}h_f(-is)\mathrm ds.
\end{align*}
Note that this is $0$ if $n$ is odd. If $n$ is even, we use the same bounds following \cref{lemma:Eis1_error_chi_sum} to see that this is 
\begin{align*}
    \ll_f\sum_{\chi, c(\chi)^2=\ell^n}\log\ell^{n/2} = O(n(\log\ell)\ell^{n/2}).
\end{align*}

For the second term in $J_{\mathrm{Eis, 1}}(\psi)$ we again have that all the terms with $u\ne\ell$ vanish. Denote by $N=N_{\chi}$ the conductor exponent of $\pi_{\eta_{\chi_\ell, s}}$. Since $\pi_{\eta_{\chi_\ell, s}}(e_m)$ is just the projection onto $\pi_{\eta_{\chi_\ell, s}}^{K_0(\ell^m)}$, we see using \cref{lemma:local_intertwiner_en} that if $N\leq m$, then: \begin{align*}
    \tr\big(R_\ell(\eta_{\chi_\ell, s})^{-1}R_\ell'(\eta_{\chi_\ell, s})\pi_{\eta_{\chi_\ell, s}}(e_m)\big)&=\sum_{k=0}^{m-N}\frac{(C_{\chi_\ell}(s)\ell^{-2ks})'}{C_{\chi_\ell}(s)\ell^{-2ks}} \\
    &=(m-N+1)\frac{C_{\chi_\ell}'(s)}{C_{\chi_\ell}(s)} -2(\log\ell)\sum_{k=0}^{m-N}k\\
    &=(m-N+1)\frac{C_{\chi_\ell}'(s)}{C_{\chi_\ell}(s)} -(\log\ell)(m-N)(m-N+1).
\end{align*}
The trace is $0$ if $N>m$.
Therefore, for $\psi_\ell = e_n-2e_{n-1}+e_{n-2}$, we get\begin{align*}
    \tr\big(R_\ell(\eta_{\chi_\ell, s})^{-1}R_\ell'(\eta_{\chi_\ell, s})\pi_{\eta_{\chi_\ell, s}}(\psi_\ell)\big)=\begin{cases}
        0&\text{if $N> n$},\\
        \frac{C_{\chi_\ell}'(s)}{C_{\chi_\ell}(s)}&\text{if $N = n$,}\\
        -2\log\ell&\text{if $N \leq n-1$,}
    \end{cases}
\end{align*}
As before we have \begin{align*}
    \abs{\frac{C_{\chi_\ell}'(s)}{C_{\chi_\ell}(s)}} \ll n\log\ell.
\end{align*}
Therefore, \begin{align*}
    &\abs{\sum_{\chi}\int_{-i\infty}^{i\infty} \tr\big(R_\ell(\eta_{\chi_\ell, s})^{-1}R_\ell'(\eta_{\chi_\ell, s})\pi_{\eta_{\chi_\ell, s}}(\psi_\ell)\big)\prod_{v\ne \ell}\tr(\pi_{\eta_{\chi_v, s}}(\psi_v))\mathrm ds}\\
    &\ll n\log\ell\sum_{\chi,\,c(\chi)\mid \ell^{\lfloor n/2\rfloor}}\int_{-i\infty}^{i\infty} \abs{h_f(-is)}\mathrm ds\\
    &\ll_f n(\log\ell)\ell^{n/2}.
\end{align*}
\printbibliography

\end{document}